\documentclass[11pt,letterpaper]{amsart}
\usepackage[plainpages=false]{hyperref}
\usepackage{amsfonts,latexsym,rawfonts,amsmath,amssymb,amsthm, mathrsfs, lscape, calligra}
\usepackage{MnSymbol}
\usepackage{verbatim}
\usepackage{enumerate}
\usepackage{centernot}
\usepackage{mathtools}
\usepackage{tikz}
\usepackage{tikz-cd}

\usepackage[all]{xy}
\usepackage{graphicx,psfrag}

\usepackage{array, tabularx, color}

\usepackage{setspace}

\newtheorem{thm}{Theorem}[section]
\newtheorem{cor}[thm]{Corollary}
\newtheorem{lem}[thm]{Lemma}

\newtheorem{prop}[thm]{Proposition}

\newtheorem{exam}[thm]{Example}

\theoremstyle{remark}
\newtheorem{rmk}[thm]{Remark}

\theoremstyle{definition}
\newtheorem{defi}[thm]{Definition}

\newcommand{\CBbb}{\mathbb C}

\newcommand{\PBbb}{\mathbb P}

\newcommand{\ZBbb}{\mathbb Z}

\newcommand{\Ccal}{\mathcal C}

\newcommand{\Ecal}{\mathcal E}
\newcommand{\Fcal}{\mathcal F}
\newcommand{\Gcal}{\mathcal G}
\newcommand{\Hcal}{\mathcal H}
\newcommand{\Ical}{\mathcal I}

\newcommand{\Kcal}{\mathcal K}

\newcommand{\Ocal}{\mathcal O}

\newcommand{\Qcal}{\mathcal Q}

\newcommand{\Tcal}{\mathcal T}
\newcommand{\Ucal}{\mathcal U}

\DeclareMathOperator{\id}{id}

\DeclareMathOperator{\rank}{rank}

\DeclareMathOperator{\Spec}{Spec}

\DeclareMathOperator{\Ker}{Ker}

\newcommand{\sing}{{\rm Sing}}

\numberwithin{equation}{section}

\makeatletter
\@namedef{subjclassname@2020}{\textup{2020} Mathematics Subject Classification}
\makeatother

\begin{document}
\title[Point Singularities and Local $c_3$]{Point Singularities and Local Third Chern Classes for Rank-Two Torsion-free Sheaves on Threefolds}
\author[Chen]{Xuemiao Chen}
\address{Department of pure mathematics, University of Waterloo, Ontario, Canada, N2L 3G1}\email{x67Chen@uwaterloo.ca}
\thispagestyle{empty}

\begin{abstract}
In this paper, motivated by singularity formation in gauge theory, we study
the local third Chern class contribution carried by isolated point
singularities of rank-two torsion-free sheaves on complex threefolds. In the
local rank-two setting considered here, the invariant is defined in terms of
finite-length local algebraic data at the singular point. We prove that it can
be computed from data on the total family; in particular, it is deformation
invariant. We also prove that its parity recovers a
topological invariant of the underlying smooth complex rank-two vector bundle
on the boundary sphere.  We then give a relative $K$-theoretic interpretation:
a self-dual complex naturally associated with the sheaf defines a local
$K$-theoretic charge, and this charge is equal to the local third Chern
class.  For rank-two reflexive sheaves, we relate the same invariant to several
classical algebraic quantities, including the Fitting scheme and the
Buchsbaum--Rim multiplicity.  We also discuss applications to the boundary of
moduli spaces of Hermitian--Yang--Mills connections.
\end{abstract}

\maketitle

\tableofcontents

\bibliographystyle{amsplain}

\section{Introduction}

The purpose of this paper is to study the third Chern class contribution of rank-two torsion-free sheaves with isolated point singularities on complex threefolds. The starting point is the local term appearing in the bubbling analysis of degenerating rank-two bundles \cite{Chen26a}. We show that, in the rank-two isolated point setting, this term is an intrinsic local integer with deformation-theoretic, topological, and relative $K$-theoretic interpretations.

We begin with two motivations.

\begin{enumerate}
\item The first motivation comes from singularity formation in gauge theory. In \cite{Chen26a}, we established an algebraic bubbling identity for a family of rank-two bundles degenerating to a rank-two torsion-free sheaf with an isolated point singularity. The identity identifies the point contribution with a finite-length local algebraic quantity at the singularity. This is the reason for treating torsion-free sheaves with isolated point singularities on threefolds as local objects in their own right: they are not only technical degenerations of bundles, but natural local models for point-singular boundary phenomena in moduli spaces.

The torsion-free setting is essential here. Non-locally-free rank-two reflexive sheaves with isolated point singularities do not arise as such limits in this setting, while rank-two torsion-free sheaves with isolated point singularities may arise in this way. For explicit local examples, see \cite[Sections 3--5]{Chen26a}. From the analytic side, such point singularities are a basic difficulty in understanding compactifications of moduli spaces of Hermitian--Yang--Mills connections. To analyze these boundary phenomena, one needs local control of the algebraic quantities attached to the singularity and, in particular, an understanding of how they behave under deformations. One purpose of this paper is to isolate the rank-two features behind the bubbling identity and to study the resulting local invariant systematically.

Concretely, for a rank-two torsion-free sheaf $\Fcal$ with isolated singularities over a ball in $\CBbb^3$, the invariant at a singular point is defined by
$$
c_3(\Fcal,x)
=
\ell\bigl(\Ecal xt^1(\Fcal,\Ocal_B),x\bigr)
-
2\ell\bigl(\Fcal^{**}/\Fcal,x\bigr).
$$

\item The second motivation comes from the global theory of rank-two reflexive sheaves and from local Chern class theory. For a rank-two reflexive sheaf on three-dimensional projective space, the third Chern class is an important invariant in moduli problems; in particular, it controls the number of essential singularities \cite{Hartshorne:1980}. The classical formula expressing this invariant in terms of local Ext-lengths suggests that, in rank two, the point contribution should be governed by finite-length local algebra.

There are also general local Chern class formalisms behind this point of view. Iversen's theory gives local Chern classes for finite complexes of vector bundles which are exact away from a closed subset \cite{Iversen:1976}; in algebraic geometry, localized Chern classes are available in Fulton's intersection-theoretic framework \cite{Fulton:84}. Our goal here is complementary: we isolate the rank-two point-singularity case and compute the relevant local class directly.

In this paper, we therefore give a local treatment over a ball in $\CBbb^3$. The rank-two point-singularity setting has the special feature that the invariant can first be defined by finite-length local algebra, before passing to an associated complex which is exact away from $0$. This description makes several features of the invariant explicit: its flat-family formula, its deformation invariance, its parity, and its relation to boundary topology and to algebraic invariants such as the Fitting scheme and the Buchsbaum--Rim multiplicity.

Thus the treatment below gives a concrete rank-two local model for the general theory of local Chern classes. It shows what the local class measures in this singularity setting and explains why the resulting relative $K$-theoretic charge is governed by the same finite-length algebraic data. From the analytic point of view, this gives concrete local formulas for the point singularities which occur at the boundary of moduli spaces of Hermitian--Yang--Mills connections modulo gauge over compact K\"ahler threefolds, especially in relation to the Donaldson--Thomas--Segal program \cite{DonaldsonThomas:98, DonaldsonSegal:11}.
\end{enumerate}

We now describe the organization of the paper.

In Section \ref{Local Third Chern Class}, we introduce the local third Chern class for rank-two torsion-free sheaves with isolated point singularities on the unit ball in $\CBbb^3$. We prove the main deformation invariance theorem by showing that the invariant can be computed from data on the total family. The key step is first to establish the result for reflexive families and then to deduce the general torsion-free case. This phenomenon is special to rank two: in higher rank, examples show that no analogous simple statement holds. A crucial input is the self-duality of rank-two reflexive sheaves. As applications, we obtain several obstructions to realizing rank-two torsion-free sheaves with isolated point singularities as limits of locally free sheaves, or more generally of reflexive sheaves.

In Section \ref{Section-Parity}, we study the parity of the local third Chern class. To a rank-two torsion-free sheaf with an isolated point singularity, we associate a local topological invariant of the underlying smooth rank-two bundle on a small boundary sphere. This invariant detects the topological nontriviality of the bundle. We prove that it is recovered by the parity of the local third Chern class. The proof proceeds by perturbing isolated singularities to simple singularities, where the relevant invariants can be computed explicitly, and then applying deformation invariance and additivity. As a consequence, sheaves of odd parity cannot occur as isolated point singularities on the boundary of the moduli space of Hermitian--Yang--Mills connections modulo gauge. We also compute the topological invariant when a locally free extension exists. In particular, in the odd-parity case, it can be detected from the tangent cone of any admissible Hermitian--Yang--Mills connection on the sheaf.

In Section \ref{Section-K group}, we give a relative $K$-theoretic interpretation of the local third Chern class. After recalling the relevant classical relative $K$-groups and their relation with the Grothendieck group of finite-length modules, we construct, from a rank-two torsion-free sheaf, a self-dual complex which is exact away from the singular point. This complex defines a relative $K$-class on the pair $(B,S^5)$; the corresponding relative $K$-group is isomorphic to $\ZBbb$ and is generated by the Bott class. Using these identifications, we define a local $K$-theory charge at the singularity and prove that this charge is equal to the local third Chern class. In particular, in families of rank-two torsion-free sheaves with isolated point singularities, the resulting local $K$-theory charge is constant and can be computed algebraically from the total family.

In Section \ref{Section-Reflexive}, we specialize to rank-two reflexive sheaves. Since the quotient $\Fcal^{**}/\Fcal$ vanishes in the reflexive case, the local third Chern class is governed entirely by the local Ext sheaf $\Ecal xt^1(\Fcal,\Ocal_B)$. We show that this integer has several classical descriptions. Starting from a local presentation, we define a topological degree of the presentation map and prove that it agrees with the local third Chern class. We then identify the same integer with the length of the Fitting scheme and with the Buchsbaum--Rim multiplicity of the transpose presentation. Thus, in the reflexive case, the local invariant studied in this paper is recovered by standard invariants from topology and local algebra.

\subsection*{Notations and conventions}

\begin{itemize}
\item We work either on the three-dimensional unit ball $B\subset \CBbb^3$ centered at the origin, or on $X=B\times \Delta$, where $\Delta\subset \CBbb$ is the unit disk centered at the origin. For $0<r<1$, we write $B(r)\subset B$ for the concentric ball of radius $r$. We denote by $p:X\to \Delta$ the projection. A \emph{family} means a coherent sheaf on $X$ which is flat over $\Delta$ in our setting. If flatness is not assumed, we will simply say a coherent sheaf on $X$. A coherent sheaf on $B$ will be called a \emph{sheaf}. For $t\in \Delta$, we set $B_t=B\times \{t\}$ and identify $B_t$ with $B$ when no confusion can arise. If $\Ecal$ is a family, we write $\Ecal_t=\Ecal|_{B_t}$. For the central fiber, we often write $\Ccal=\Ecal|_{B_0}$.

\item We usually use $\Fcal$ to denote a sheaf on $B$, $\Ecal$ to denote a family on $X$, and $\Ccal$ to denote the central fiber of a family.

\item Unless otherwise specified, $\Ocal$ denotes the structure sheaf of the ambient space. When the ambient space is ambiguous, we write $\Ocal_B$ or $\Ocal_X$ explicitly. Duals and double duals are always taken on the space on which the sheaf lives. Thus $\Ecal^{**}$ denotes the double dual on $X$, while $\Ecal_t^{**}$ denotes the double dual of $\Ecal_t$ on $B_t$. We write $\Ecal xt^i(-,\Ocal)$ for sheaf Ext.

\item If $\tau$ is a finite-length sheaf, then, unless otherwise specified, we write
$$
\ell(\tau,x)=\operatorname{length}_{\Ocal_x}(\tau_x)=\dim_{\CBbb}\tau_x.
$$
If $\tau$ is supported at finitely many points, we write $\ell(\tau)=\sum_x\ell(\tau,x)$.

\item For a coherent $\Ocal_\Delta$-module $M$, the symbol $\operatorname{rank} M$ denotes its generic rank, or equivalently the rank of its locally free part.

\item Let $\iota_t:B_t\hookrightarrow X$ be the natural inclusion. For the central fiber, we write $\iota=\iota_0$. Thus, for a sheaf $\Ccal$ on $B_0$, the symbol $\iota_*\Ccal$ denotes its push-forward to $X$.
\end{itemize}

\subsection*{Acknowledgement}
The author thanks Richard Thomas and Jiahao Hu for helpful comments related to local third
Chern classes. He also thanks Song Sun for helpful
discussions related to this work. This work was
partially supported by NSERC and the ECR Supplement.

\section{Local third Chern classes}\label{Local Third Chern Class}

\subsection{Main results}
\begin{defi}\label{defi-2.1}
Let $\Fcal$ be a rank-two torsion-free sheaf on $B$ with finitely many isolated singularities. The local third Chern class of $\Fcal$ at a singular point $x$ is defined by
$$
c_3(\Fcal,x)=\ell(\Ecal xt^1(\Fcal,\Ocal_B),x)-2\ell(\Fcal^{**}/\Fcal,x).
$$
Here $\Fcal^{**}$ denotes the double dual of $\Fcal$ on $B$. The total third Chern class of $\Fcal$ over $B$ is defined by
$$
c_3(\Fcal)=\sum_{x\in \sing(\Fcal)} c_3(\Fcal,x).
$$
\end{defi}

We first record a flatness criterion for the sheaves considered below.

\begin{lem}\label{lem-flatness}
Let $\Ecal$ be a coherent sheaf on $X$. Suppose that, for every $t\in \Delta$, the fiber $\Ecal_t$ is a rank-two torsion-free sheaf on $B_t$ with isolated point singularities. Then $\Ecal$ is flat over $\Delta$.
\end{lem}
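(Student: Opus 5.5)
Flatness over the smooth curve $\Delta$ is equivalent to $\Ecal$ having no $\Ocal_\Delta$-torsion. Concretely, for each $t_0\in\Delta$, multiplication by $s=t-t_0$ on $\Ecal$ must be injective. So I will prove that the subsheaf $\Tcal=\Ker(s:\Ecal\to\Ecal)$ vanishes. After translating we may take $t_0=0$. The subsheaf $\Tcal$ is coherent and is annihilated by $s$, so it is a coherent $\Ocal_{B_0}$-module supported on $B_0$, and the inclusion $\Tcal\hookrightarrow\Ecal$ is a map of sheaves on $X$ killed by $s$.

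The main step is to compare $\Tcal$ with the fiber $\Ecal_0=\Ecal/s\Ecal$. Let $x\in B_0$ and suppose $\Tcal_x\neq0$. Since $\Ecal_x$ is a finitely generated module over the Noetherian local ring $\Ocal_{X,x}$, the Artin--Rees lemma applies. It gives $k$ with $s^n\Ecal_x\cap\Tcal_x\subset s^{n-k}(s^k\Ecal_x\cap \Tcal_x)=0$ for $n>k$, because $s\Tcal=0$. By Krull's intersection theorem, $\Tcal_x\not\subset s\Ecal_x$ after possibly replacing $\Tcal_x$ by a suitable nonzero piece.

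Concretely, I use the filtration $\Tcal_x\cap s^j\Ecal_x$, which vanishes for $j$ large. Take the largest $j$ with $\Tcal_j:=\Tcal_x\cap s^j\Ecal_x\neq 0$. Its image in $s^j\Ecal_x/s^{j+1}\Ecal_x$ is then nonzero. Now $s^j\Ecal_x/s^{j+1}\Ecal_x$ is a quotient of $\Ecal_x/s\Ecal_x=\Ecal_{0,x}$ via multiplication by $s^j$. Write $\Tcal_j=s^jM$ for some $M\subset\Ecal_x$. Then $s^{j+1}M=0$, and the image of $M$ in $\Ecal_{0,x}$ is a nonzero submodule that is killed by\dots.

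The hard point is to force this image into the torsion of $\Ecal_{0,x}$. To avoid this, I would instead use a dimension count, which I think is cleaner.

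Away from the finite set $\Sigma=\bigcup_t\sing(\Ecal_t)\times\{t\}$, the fiber $\Ecal_t$ is locally free of rank two near each point. By Nakayama's lemma, $\Ecal$ is then locally generated by two sections. Hence near such a point there is a surjection $\Ocal_X^2\to\Ecal$, which restricts to an isomorphism $\Ocal_{B_t}^2\to\Ecal_t$. Let $K$ be its kernel. Then $\mathrm{Tor}_1(\Ecal,\Ocal_{B_t})\to K|_{B_t}\to\Ocal^2\to\Ecal_t\to0$ is exact, and the last map is an isomorphism, so $K|_{B_t}=0$. By Nakayama, $K=0$ near the point. Hence $\Ecal$ is locally free, and in particular flat, off the set $\Sigma$.

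By the coherence of the singular locus of $\Ecal$, the set $\Sigma$ is an analytic subset. Its fiber over each $t$ is finite, so $\Sigma$ has dimension at most $1$.

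The torsion $\Tcal$ is therefore supported in $\Sigma\cap B_0$, which is a finite set, so $\Tcal$ has finite length. To finish, I go back to the Artin--Rees step. There is a nonzero submodule $N\subset\Ecal_x$ with $sN=0$, and we may take $N\subset\Tcal_x$ of finite length. Choose $j$ maximal with $N\cap s^j\Ecal_x\ne0$, and pick $0\neq n=s^jm$ in it, with $m\in\Ecal_x$.

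Let $\bar m$ be the image of $m$ in $\Ecal_{0,x}$. I claim $\bar m$ is a nonzero torsion element. It is nonzero by the maximality of $j$. For torsion, let $\mathfrak m$ be the maximal ideal of $\Ocal_{X,x}$. Since $N$ has finite length, $\mathfrak m^r n=0$ for some $r$, so $s^j\mathfrak m^rm=0$. I expect this to show that $\mathfrak m^r\bar m$ lies in the image of the $s$-torsion of $\Ecal_x/s^{j}$, which after induction on $j$ gives that $\bar m$ is annihilated by a power of $\mathfrak m$ in $\Ecal_{0,x}$.

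This contradicts the torsion-freeness of $\Ecal_0$ on the three-dimensional $B_0$. The inductive bookkeeping in this last step is where I expect the real work to lie. The alternative I would try first is depth: $\Ecal_0$ is torsion-free of dimension $3$, so it has depth at least $1$. If also $s$ is $\Ecal_x$-regular, then $\mathrm{depth}\,\Ecal_x\ge2$. The local cohomology sequence $H^0_{x}(\Ecal)\xrightarrow{s}H^0_{x}(\Ecal)\to H^0_x(\Ecal_0)=0$ shows that $s$ acts surjectively on the finite-length module $H^0_x(\Ecal)\supset\Tcal_x$. Since $H^0_x(\Ecal)$ has finite length, Nakayama's lemma then gives $H^0_x(\Ecal)=0$, hence $\Tcal_x=0$.
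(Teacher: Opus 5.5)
\textbf{There is a genuine gap in the step meant to show that $\Tcal$ has finite length.} That step claims $\Ecal$ is locally free off $\Sigma$, and the argument for it is circular. From $0\to K\to\Ocal_X^2\to\Ecal\to0$ you get the exact sequence $0\to\Tcal or_1(\Ecal,\Ocal_{B_t})\to K|_{B_t}\to\Ocal_{B_t}^2\to\Ecal_t\to0$. So the fact that $\Ocal_{B_t}^2\to\Ecal_t$ is an isomorphism only gives $K|_{B_t}\cong\Tcal or_1(\Ecal,\Ocal_{B_t})$. Concluding $K|_{B_t}=0$ assumes exactly the flatness you are trying to prove.

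The argument cannot be correct as written, because it never uses that the \emph{nearby} fibers have rank two. Take $\Ecal=\Ocal_X\oplus\iota_*\Ocal_{B_0}$. Its central fiber is $\Ocal_{B_0}^2$, and the surjection $\Ocal_X^2\to\Ecal$ restricts to an isomorphism on $B_0$. Yet $K=0\oplus t\Ocal_X\neq0$ and $\Ecal$ is not flat. Its fibers over $t\neq0$ have rank one, which is precisely the hypothesis your argument ignores.

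Consequently the finite length of $\Tcal$, on which your endgame depends, is unproved. The endgame itself is salvageable once finite length is known. The sequence $0\to\Ecal\xrightarrow{s}\Ecal\to\Ecal_0\to0$ is not available, but you can check directly that $H^0_x(\Ecal)\cap s\Ecal_x=sH^0_x(\Ecal)$ when $\Tcal_x$ is killed by a power of $\mathfrak m$. Then $H^0_x(\Ecal_0)=0$ forces $H^0_x(\Ecal)=sH^0_x(\Ecal)$, and Nakayama applies. The remark ``if also $s$ is $\Ecal_x$-regular'' is circular and should be dropped, and the unfinished Artin--Rees induction is not needed.

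\textbf{The missing idea is a rank comparison with nearby fibers.} You could repair your step as follows. Since $K\subset\Ocal_X^2$ is torsion-free, $K\neq0$ would make $\Ecal$ have generic rank at most one near $x$. Then $\Ecal_t$ would also have rank at most one for general $t$, contradicting rank two.

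The paper's argument is shorter and needs neither isolated singularities nor local cohomology. Take $\Tcal$ to be the full $s$-power torsion. Since $\Ecal/\Tcal$ is $s$-torsion-free, it is flat, so $\Tcal/s\Tcal\hookrightarrow\Ecal_{t_0}$. Also $(\Ecal/\Tcal)_{t_0}$ has rank two, because its nearby fibers coincide with the $\Ecal_t$. Hence $\Tcal/s\Tcal$ is a rank-zero subsheaf of the torsion-free sheaf $\Ecal_{t_0}$, so it vanishes, and Nakayama gives $\Tcal=0$.
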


\begin{proof}
It suffices to show that $\Ecal$ has no torsion over the base. Fix $t_0\in \Delta$ and let $s=t-t_0$ be a local parameter at $t_0$. Let $\Tcal\subset \Ecal$ be the subsheaf killed by some power of $s$. Suppose $\Tcal\neq 0$. Then $\Tcal/s\Tcal\neq 0$ by Nakayama's lemma. Since $\Ecal/\Tcal$ has no $s$-torsion, the natural map
$$
\Tcal/s\Tcal\to \Ecal_{t_0}
$$
is injective. Moreover, $\Ecal/\Tcal$ is flat over $\Delta$ near $t_0$, since a torsion-free sheaf over a smooth curve is flat. Its nearby fibers have rank two, hence $(\Ecal/\Tcal)_{t_0}$ also has rank two. From the exact sequence
$$
0\to \Tcal/s\Tcal\to \Ecal_{t_0}\to (\Ecal/\Tcal)_{t_0}\to 0
$$
and the assumption that $\Ecal_{t_0}$ has rank two, it follows that $\Tcal/s\Tcal$ has rank zero on $B_{t_0}$. Thus $\Tcal/s\Tcal$ is a nonzero torsion subsheaf of the torsion-free sheaf $\Ecal_{t_0}$, a contradiction. Hence $\Tcal=0$ for every $t_0$, so $\Ecal$ has no base torsion. Therefore $\Ecal$ is flat over $\Delta$.
\end{proof}

We first give a family formula for the total local third Chern class.

\begin{thm}\label{Theorem-1}
Let $\Ecal$ be a family such that, for every $t\in \Delta$, the restriction $\Ecal_t$ is a rank-two torsion-free sheaf with finitely many isolated point singularities, all contained in $B(1/2)$. Then, for every $t\in \Delta$, the following holds
$$
c_3(\Ecal_t)=\rank(p_*\Ecal xt^1(\Ecal^{**},\Ocal_X))-2\rank(p_*(\Ecal^{**}/\Ecal)).
$$
\end{thm}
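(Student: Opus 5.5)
The plan is to first reduce the statement to a constancy statement, and then prove constancy in two stages: first for the reflexive hull $\Gcal:=\Ecal^{**}$ on $X$, then for $\Ecal$ itself via $\Qcal:=\Ecal^{**}/\Ecal$.

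\emph{Coherence and flatness.} By Lemma \ref{lem-flatness}, $\Ecal$ is flat over $\Delta$. The sheaf $\Gcal$ is reflexive on the smooth fourfold $X$, hence has no $t$-torsion, hence is flat as well. Both $\Ecal xt^1(\Gcal,\Ocal_X)$ and $\Qcal$ are supported on the union of the singular loci of the fibers. This union is a closed analytic subset of $\overline{B(1/2)}\times\Delta$, of relative dimension zero over $\Delta$, so it is finite over $\Delta$. Consequently $p_*\Ecal xt^1(\Gcal,\Ocal_X)$ and $p_*\Qcal$ are coherent on $\Delta$, and their ranks equal the fiberwise lengths at a general $t$.

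\emph{Formula at a general point.} For $t$ outside a discrete set, I expect three facts by generic base change. The fiber $\Gcal_t$ equals $\Ecal_t^{**}$ and is reflexive. The sheaf $\Qcal$ is flat near $B_t$, so that $\Qcal_t=\Ecal_t^{**}/\Ecal_t$. And $\Ecal xt^1(\Gcal,\Ocal_X)|_{B_t}\cong \Ecal xt^1(\Gcal_t,\Ocal_B)$.

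On $B$, consider the sequence $0\to\Ecal_t\to\Ecal_t^{**}\to\Qcal_t\to0$. Since $\Qcal_t$ has finite length, it only has $\Ecal xt^3$, so this sequence gives $\Ecal xt^1(\Ecal_t,\Ocal)\cong\Ecal xt^1(\Ecal_t^{**},\Ocal)$. Together with the three facts above, this yields the claimed identity at a general $t$.

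\emph{Constancy of $c_3(\Ecal_t)$: the reflexive stage.} It therefore suffices to show that $c_3(\Ecal_t)$ is independent of $t$. Fix a special $t_0$, with local parameter $s$.

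First treat the reflexive family $\Gcal$. Apply $\Ecal xt(-,\Ocal_X)$ to $0\to\Gcal\xrightarrow{s}\Gcal\to\iota_*\Gcal_{t_0}\to0$, and use $\Ecal xt^{i}_X(\iota_*\Fcal,\Ocal_X)\cong\iota_*\Ecal xt^{i-1}_B(\Fcal,\Ocal_B)$. This gives the exact sequence
$$
0\to \Gcal^*/s\Gcal^*\to \Gcal_{t_0}^*\to \Ecal xt^1(\Gcal,\Ocal)\xrightarrow{s}\Ecal xt^1(\Gcal,\Ocal)\to \Ecal xt^1(\Gcal_{t_0},\Ocal_B)\to \Ecal xt^2(\Gcal,\Ocal)\xrightarrow{s}\cdots .
$$
From this one reads off $\ell(\Ecal xt^1(\Gcal_{t_0}))$ as the generic rank plus $s$-torsion corrections coming from $\Ecal xt^1(\Gcal,\Ocal)$ and from $\Ecal xt^2(\Gcal,\Ocal)$. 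The latter is supported at points, because $\Gcal$ is reflexive on a fourfold.

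Next compare $\Gcal_{t_0}$ with its double dual $\Gcal_{t_0}^{**}=\Ecal_{t_0}^{**}$; the quotient has finite length. Here the rank-two self-duality $\Gcal\cong\Gcal^*\otimes\det\Gcal$ is the crucial input. It identifies the cokernel of $\Gcal^*/s\Gcal^*\to\Gcal_{t_0}^*$ with the corresponding cokernel for $\Gcal$, namely $\Gcal^{**}_{t_0}/\Gcal_{t_0}$ up to twist. Using this identification, I expect the correction terms to combine into exactly $2\ell(\Gcal_{t_0}^{**}/\Gcal_{t_0})$. This gives the reflexive case: $\ell(\Ecal xt^1(\Gcal_{t_0}))-2\ell(\Gcal_{t_0}^{**}/\Gcal_{t_0})$ equals the generic value.

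\emph{Constancy: the torsion-free stage.} Now restrict $0\to\Ecal\to\Gcal\to\Qcal\to0$ to $B_{t_0}$. Flatness of $\Gcal$ gives
$$
0\to \mathcal{T}or_1(\Qcal,\Ocal_{B_{t_0}})\to\Ecal_{t_0}\to\Gcal_{t_0}\to\Qcal_{t_0}\to0 .
$$
Since $\Ecal_{t_0}$ is torsion-free and the $\mathcal{T}or_1$ term is supported at points, that term vanishes. Hence $\Gcal_{t_0}/\Ecal_{t_0}\cong\Qcal_{t_0}$, and $\ell(\Qcal_{t_0})-\ell(\mathcal{T}or_1)=\ell(\Qcal_{t_0})$ is the rank of $p_*\Qcal$, because the Euler characteristic of $\Qcal$ along the fibers is constant.

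Both $\Ecal_{t_0}$ and $\Gcal_{t_0}$ then have the same double dual and the same $\Ecal xt^1$, by the finite-length argument used at a general point. Thus $c_3(\Ecal_{t_0})=c_3(\Gcal_{t_0})-2\ell(\Qcal_{t_0})$, and the reflexive stage finishes the proof.

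\emph{Main obstacle.} I expect the hardest step to be the reflexive stage: showing that the $s$-torsion corrections in $\Ecal xt^1(\Gcal,\Ocal)$ and $\Ecal xt^2(\Gcal,\Ocal)$ exactly cancel against $2\ell(\Gcal_{t_0}^{**}/\Gcal_{t_0})$. To do this, I would first check the cancellation after dualizing the long exact sequence via the self-duality of $\Gcal$ (local duality for the finite-length pieces), and then verify it on a simple model such as a Koszul-type reflexive family.
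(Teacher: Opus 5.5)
Your proposal is correct and follows the paper's route. You treat the reflexive case through the same restriction long exact sequence (the paper gets it by applying $\Hcal om(\Ecal,-)$ to $0\to\Ocal\xrightarrow{t}\Ocal\to\iota_*\Ocal_B\to 0$), with rank-two self-duality cancelling the $s$-torsion of $\Ecal xt^1$ against the reflexive-hull defect, and then pass to the torsion-free case via flatness of $\Ecal^{**}/\Ecal$, as the paper does.

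The one real difference is how you get that cancellation. You read $\Kcal er(s)$ on $\Ecal xt^1(\Gcal,\Ocal_X)$ as the cokernel of $(\Gcal^*)_{t_0}\to(\Gcal_{t_0})^*$ and use self-duality to turn this map into $\Gcal_{t_0}\to\Gcal_{t_0}^{**}$. This is a slicker proof of Lemma~\ref{Lemma-New Formula For C3} than the paper's presentation-and-$\Tcal or$ argument.

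Your ``main obstacle'' then closes at once. With $\tau_2=\Ecal xt^2(\Gcal,\Ocal_X)$, the second correction is $\ell\bigl(\Kcal er(s:\tau_2\to\tau_2)\bigr)=\ell\bigl(\Ecal xt^2(\Gcal_{t_0},\Ocal_B)\bigr)=\ell(\Gcal_{t_0}^{**}/\Gcal_{t_0})$, by Lemmas~\ref{Lemma-CokerAndKerSameDimension} and~\ref{Lemma-I1}. This computation works at every $t_0$, so your separate generic-point step is superfluous.
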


\begin{rmk}
\begin{enumerate}
\item Under the assumptions of Theorem \ref{Theorem-1}, the proof shows that $\Ecal^{**}/\Ecal$ is flat over $\Delta$. In particular, $p_*(\Ecal^{**}/\Ecal)$ is locally free.

\item Let $\Ecal$ be a family such that $\Ecal_t$ is locally free for $t\neq 0$, while the central fiber $\Ccal$ is a rank-two torsion-free sheaf with an isolated singularity at the origin. Then $\Ecal$ is reflexive. Therefore, by Theorem \ref{Theorem-1},
$$
c_3(\Ccal)=c_3(\Ecal_t)=0
$$
for $t\neq 0$. By Definition \ref{defi-2.1}, this is equivalent to
$$
\ell(\Ecal xt^1(\Ccal,\Ocal_B),0)=2\ell(\Ccal^{**}/\Ccal,0).
$$
In particular, this recovers the bubbling identity established in \cite{Chen26a}.

\item The key step in the proof is the case where $\Ecal$ is reflexive. Then the statement reads 
$$
c_3(\Ecal_t)=\rank (p_* \Ecal xt^1(\Ecal, \Ocal_X)).
$$
In this case, the proof below is a relative form of the local bubbling identity. In the single-point case, the bubbling contribution is measured by the $t$-torsion of $\Ecal xt^1(\Ecal,\Ocal_X)$ \cite{Chen26a}. For reflexive families, the natural object is instead the finite $\Ocal_\Delta$-module $p_*\Ecal xt^1(\Ecal,\Ocal_X)$. Its locally free part records the singularity charge that persists along nearby fibers, while its torsion part records the extra contribution appearing in the central fiber. The key calculation is that this torsion contribution is exactly cancelled by the reflexive-hull defect in the definition of $c_3$, leaving only the rank of $p_*\Ecal xt^1(\Ecal,\Ocal_X)$. The general case then follows from the reflexive case.

\item The rank two assumption in Theorem \ref{Theorem-1} is essential. In higher rank, the analogous deformation-invariance statement already fails for reflexive families. Indeed, on $X=B\times\Delta$, with coordinates $(z_1,z_2,z_3,t)$, consider the rank-three reflexive family
$$
0\to \Ocal_X
\xrightarrow{\left(\begin{smallmatrix}z_1\\ z_2\\ z_3\\ t\end{smallmatrix}\right)}
\Ocal_X^4\to \Ecal\to 0.
$$
For $t\neq 0$, the fiber $\Ecal_t$ is locally free, while $\Ccal=\Ecal|_{B_0}$ has a single isolated singularity at $0$. If one formally applies the expression in Definition \ref{defi-2.1} to this rank-three family, then
$$
c_3(\Ccal)=c_3(\Ccal,0)=1,\qquad c_3(\Ecal_t)=0\quad \text{for }t\neq 0.
$$
Thus the resulting quantity is not deformation invariant in higher rank.
   \item We are not aware of a previous explicit formulation of Theorem \ref{Theorem-1} in this local family form. The global relation between $c_3$ and $\Ecal xt^1$ for rank-two reflexive sheaves on projective three-space is classical \cite{Hartshorne:1980}; the point here is that the same type of relation admits a local formulation for families of rank-two torsion-free sheaves with isolated point singularities. As explained in Section \ref{Section-K group}, this can also be interpreted as conservation of a localized $K$-theoretic charge associated with the self-dual complex of the sheaf.
\end{enumerate}
\end{rmk}

We give the proof of Theorem \ref{Theorem-1} in the next section. Before doing so, we record some immediate applications. As a direct corollary of Theorem \ref{Theorem-1}, we obtain the following smoothability obstruction.

\begin{cor}
Let $\Fcal$ be a rank-two torsion-free sheaf on $B$ with an isolated singularity at the origin. If $c_3(\Fcal,0)\neq 0$, then $\Fcal$ cannot be smoothed in a family. More precisely, there is no family $\Ecal$ such that the central fiber $\Ccal$ satisfies $\Ccal\cong \Fcal$, while $\Ecal_t$ is locally free for $t\neq 0$.
\end{cor}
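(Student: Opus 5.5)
The corollary follows from Theorem \ref{Theorem-1} by contradiction. Suppose such a family $\Ecal$ exists: the central fiber is $\Ccal\cong\Fcal$, and $\Ecal_t$ is locally free for $t\neq 0$.

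First we check that the hypotheses of Theorem \ref{Theorem-1} hold. Every fiber is a rank-two torsion-free sheaf with isolated point singularities. The nearby fibers have none, and the central fiber has only the origin, which lies in $B(1/2)$. The family is flat by assumption; alternatively, Lemma \ref{lem-flatness} gives flatness.

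Theorem \ref{Theorem-1} then says that $t\mapsto c_3(\Ecal_t)$ equals the $t$-independent expression
$$
\rank\bigl(p_*\Ecal xt^1(\Ecal^{**},\Ocal_X)\bigr)-2\rank\bigl(p_*(\Ecal^{**}/\Ecal)\bigr),
$$
so it is constant on $\Delta$. For $t\neq 0$, $\Ecal_t$ is locally free, so $\sing(\Ecal_t)=\emptyset$. The sum in Definition \ref{defi-2.1} is therefore empty and $c_3(\Ecal_t)=0$.

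At $t=0$, the singular set of $\Ccal\cong\Fcal$ is the origin alone, so $c_3(\Ccal)=c_3(\Fcal,0)$. Constancy gives $c_3(\Fcal,0)=0$, which contradicts the hypothesis $c_3(\Fcal,0)\neq 0$.

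The one point that needs care is matching the corollary's hypotheses to those of Theorem \ref{Theorem-1}.
\begin{itemize}
\item If the singular locus of $\Fcal$ is only the origin, the $B(1/2)$ condition holds.
\item If $\Fcal$ had other singular points, or if "isolated singularity at the origin" allowed further singularities elsewhere, we would shrink $B$ to a smaller ball around $0$ and rescale it to the unit ball. Isomorphism of the central fiber is preserved under restriction, and $c_3(\Fcal,0)$ is local at $0$.
\end{itemize}
Alternatively, one can use Remark (2): $\Ecal$ is reflexive, and the bubbling identity forces $\ell(\Ecal xt^1(\Ccal,\Ocal_B),0)=2\ell(\Ccal^{**}/\Ccal,0)$, which is exactly $c_3(\Ccal,0)=0$.
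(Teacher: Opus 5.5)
Your proof is correct and follows essentially the same route as the paper: argue by contradiction, use Theorem \ref{Theorem-1} to get that $c_3(\Ecal_t)$ is constant in $t$, and compare $c_3(\Ecal_t)=0$ for $t\neq 0$ with $c_3(\Ccal)=c_3(\Fcal,0)$. Your extra checks of the hypotheses (flatness, the $B(1/2)$ condition, and shrinking $B$ if $\Fcal$ has other singular points) are reasonable details that the paper leaves implicit.
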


\begin{proof}
Suppose, to the contrary, that such a family $\Ecal$ exists. Since $\Ecal_t$ is locally free for $t\neq 0$, we have $c_3(\Ecal_t)=0$. By Theorem \ref{Theorem-1}, the total local third Chern class is invariant in the family. Hence
$$
c_3(\Fcal,0)=c_3(\Ccal)=c_3(\Ecal_t)=0,
$$
a contradiction.
\end{proof}

\begin{rmk}
For example, a rank-two reflexive sheaf with a nontrivial isolated singularity cannot be smoothed (\cite{Chen26a}).
\end{rmk}

As a second application, we obtain an obstruction to being a limit of rank-two reflexive sheaves.

\begin{cor}
Let $\Fcal$ be a rank-two torsion-free sheaf on $B$ with an isolated singularity at the origin. If $c_3(\Fcal,0)<0$, then $\Fcal$ cannot arise as the central fiber of a family of rank-two reflexive sheaves. More precisely, there is no family $\Ecal$ such that the central fiber $\Ccal$ satisfies $\Ccal\cong \Fcal$, while $\Ecal_t$ is reflexive for $t\neq 0$.
\end{cor}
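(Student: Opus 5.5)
Argue by contradiction, exactly as in the previous corollary, using Theorem \ref{Theorem-1} together with the observation that the local third Chern class of a reflexive sheaf is nonnegative. Suppose a family $\Ecal$ exists with $\Ccal\cong\Fcal$ and $\Ecal_t$ reflexive for $t\neq 0$. The hypotheses of Theorem \ref{Theorem-1} are met after shrinking. First, rank-two reflexive sheaves on a threefold have only isolated singularities. Second, by semicontinuity of the locus where the family fails to be locally free, the singularities of $\Ecal_t$ for small $|t|$ stay in a small neighbourhood of the origin. So we may restrict to a smaller disk $\Delta$ and a smaller ball, then rescale so that all singular points lie in $B(1/2)$. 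Lemma \ref{lem-flatness} confirms flatness, and Theorem \ref{Theorem-1} then gives
$$
c_3(\Fcal,0)=c_3(\Ccal)=c_3(\Ecal_t)\quad\text{for } t\neq 0 \text{ small}.
$$

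Next compute $c_3(\Ecal_t)$ directly from Definition \ref{defi-2.1}. Since $\Ecal_t$ is reflexive, $\Ecal_t^{**}/\Ecal_t=0$, so
$$
c_3(\Ecal_t)=\sum_{x}\ell(\Ecal xt^1(\Ecal_t,\Ocal_B),x)\ \ge 0,
$$
being a sum of lengths. This contradicts $c_3(\Fcal,0)<0$, which proves the corollary.

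The only step needing care is the reduction to the setting of Theorem \ref{Theorem-1}: the singularities of nearby fibers must be finite in number and must stay in a compact subball. Finiteness holds because the non-locally-free locus of a rank-two reflexive sheaf on a smooth threefold has codimension at least three. Uniform containment holds because the singular locus of the total family $\Ecal$ is a closed analytic subset of $X$ meeting $B_0$ only at the origin, hence proper over a small disk near $0$. I expect this localization to be the main, though mild, obstacle. Everything else is immediate from the theorem.
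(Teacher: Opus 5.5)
Your proposal is correct and is essentially the paper's proof. It combines the invariance $c_3(\Ccal)=c_3(\Ecal_t)$ from Theorem \ref{Theorem-1} with the nonnegativity of $c_3(\Ecal_t)$, which holds because $\Ecal_t^{**}/\Ecal_t=0$ for reflexive fibers. Your extra localization step, which the paper leaves implicit, is sound as long as properness is taken after restricting to a relatively compact subball $\overline{B(r)}$; without that restriction, singular points of nearby fibers could enter from $\partial B$.
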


\begin{proof}
Suppose, to the contrary, that such a family $\Ecal$ exists. Since $\Ecal_t$ is reflexive for $t\neq 0$, Definition \ref{defi-2.1} gives $c_3(\Ecal_t)\geq 0$. By Theorem \ref{Theorem-1}, the total local third Chern class is invariant in the family. Hence
$$
c_3(\Fcal,0)=c_3(\Ccal)=c_3(\Ecal_t)\geq 0,
$$
which contradicts the assumption that $c_3(\Fcal,0)<0$.
\end{proof}
\begin{rmk}
For example, let $\mathfrak m_0=(z_1,z_2,z_3)\subset\Ocal_B$ be the ideal sheaf of $0$, and let $\Fcal$ be defined by
$$
0\to \Ocal_B
\xrightarrow{\left(\begin{smallmatrix}z_1\\ z_2\\ z_3\end{smallmatrix}\right)}
\Ocal_B^2\oplus \mathfrak m_0
\to \Fcal \to 0,
$$
where the last component $z_3$ is regarded as a section of $\mathfrak m_0$. Then
$$
c_3(\Fcal,0)=-1.
$$
By the preceding corollary, $\Fcal$ cannot arise as the central fiber of a family whose nearby fibers are rank-two reflexive sheaves.

Similarly, if $\Fcal\subset \Ocal_B^2$ is a subsheaf such that $\Ocal_B^2/\Fcal$ is a nonzero finite-length sheaf, then $\Fcal^{**}\cong\Ocal_B^2$ and $\Ecal xt^1(\Fcal,\Ocal_B)=0$. Hence
$$
c_3(\Fcal)=-2\ell(\Ocal_B^2/\Fcal)<0,
$$
so $\Fcal$ cannot arise as such a limit.
\end{rmk}

As a final application, we record a constraint on the reflexive hull of the central fiber for the family we consider.

\begin{cor}
Let $\Ecal$ be a family satisfying the assumptions of Theorem \ref{Theorem-1}. Suppose that $\Ecal_t$ is reflexive for $t\neq 0$, while the central fiber $\Ccal$ has an isolated singularity at the origin. Then the singularity of $\Ccal$ is essential; equivalently, $\Ccal^{**}$ is not locally free at the origin.
\end{cor}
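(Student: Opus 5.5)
We argue by contradiction: assume $\Ccal^{**}$ is locally free at the origin, and show that this forces $c_3(\Ccal,0)<0$. That sign is incompatible with $c_3(\Ccal)=c_3(\Ecal_t)\ge 0$, which Theorem \ref{Theorem-1} gives from the reflexivity of the nearby fibers.

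\textbf{Step 1 (nonnegativity of the total class).} For $t\neq 0$, the sheaf $\Ecal_t$ is reflexive. Hence $\Ecal_t^{**}/\Ecal_t=0$, and Definition \ref{defi-2.1} gives
$$
c_3(\Ecal_t)=\ell(\Ecal xt^1(\Ecal_t,\Ocal_B))\ge 0.
$$
By Theorem \ref{Theorem-1}, the total class is the same for all $t$, so $c_3(\Ccal)\ge 0$.

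\textbf{Step 2 (restricting to the origin).} Under the standing reading that the origin is the singular point of $\Ccal$ of interest, with $\Ccal$ smooth elsewhere (i.e.\ its only singularity in $B(1/2)$), we have $c_3(\Ccal)=c_3(\Ccal,0)$. If $\Ccal$ had other singular points, I would instead localize by shrinking $B$ to a small ball around $0$ and around each other singular point. Theorem \ref{Theorem-1} is stated on a ball, so this should be harmless after rescaling.

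\textbf{Step 3 (the key computation).} Suppose $\Ccal^{**}$ is locally free near $0$. Then $\Ccal\subset \Ccal^{**}\cong\Ocal_B^2$ near $0$, with finite-length quotient $Q=\Ccal^{**}/\Ccal$. Since the origin is a genuine singularity of $\Ccal$, we have $Q_0\neq 0$. Applying $\Ecal xt$ to $0\to\Ccal\to\Ocal^2\to Q\to 0$ gives
$$
\Ecal xt^1(\Ccal,\Ocal)\cong \Ecal xt^2(Q,\Ocal).
$$
The left side vanishes because $\Ecal xt^i(\Ocal^2,\Ocal)=0$ for $i\ge1$. The right side vanishes because a finite-length module on a smooth threefold has $\Ecal xt^i(Q,\Ocal)=0$ for $i<3$, since its depth is $0$ and the local ring is Cohen--Macaulay of dimension $3$. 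This is exactly the computation in the remark preceding this corollary. It yields
$$
c_3(\Ccal,0)=-2\ell(Q,0)<0.
$$

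\textbf{Conclusion and main obstacle.} Step 3 contradicts Step 1. The main point to verify is the vanishing of $\Ecal xt^2(Q,\Ocal)$ for finite-length $Q$, which is standard local duality; the localization in Step 2 is also routine. Hence $\Ccal^{**}$ is not locally free at $0$. Equivalently, the singularity is essential: it is not removed by passing to the reflexive hull.
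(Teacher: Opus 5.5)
Your proposal is correct and follows the paper's proof essentially verbatim. You assume $\Ccal^{**}$ is locally free at $0$, deduce $\Ecal xt^1(\Ccal,\Ocal_B)=0$ near $0$ so that $c_3(\Ccal,0)=-2\ell(\Ccal^{**}/\Ccal,0)<0$, and contradict $c_3(\Ccal)=c_3(\Ecal_t)\geq 0$ from Theorem \ref{Theorem-1}; your Step 2 also usefully covers the case, implicitly excluded in the paper, where $\Ccal$ has other singular points (making it rigorous uses that the singular locus of $\Ecal$ is finite and proper over $\Delta$, so nearby singularities stay inside the small ball). One small correction: the vanishing $\Ecal xt^i(Q,\Ocal)=0$ for $i<3$ comes from $\dim Q=0$, i.e. the annihilator of $Q$ has grade $3$, not from $\operatorname{depth}Q=0$, which instead gives the vanishing for $i>3$.
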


\begin{proof}
Suppose, to the contrary, that $\Ccal^{**}$ is locally free at the origin. Since $\Ccal$ is singular at the origin, the quotient $\Ccal^{**}/\Ccal$ has positive length at the origin. Moreover, $\Ecal xt^1(\Ccal,\Ocal_B)$ vanishes near the origin, because $\Ccal$ is a finite-length modification of the locally free sheaf $\Ccal^{**}$ on the smooth threefold $B$. Therefore Definition \ref{defi-2.1} gives
$$
c_3(\Ccal)=c_3(\Ccal,0)=-2\ell(\Ccal^{**}/\Ccal,0)<0.
$$
On the other hand, since $\Ecal_t$ is reflexive for $t\neq 0$, we have $c_3(\Ecal_t)\geq 0$. By Theorem \ref{Theorem-1},
$$
c_3(\Ccal)=c_3(\Ecal_t)\geq 0,
$$
a contradiction. Hence $\Ccal^{**}$ is not locally free at the origin.
\end{proof}

\subsection{Proof of Theorem \ref{Theorem-1}}
We first treat the reflexive case.

\begin{prop}\label{Prop-Reflexive-case}
Let $\Ecal$ be a rank-two reflexive family on $X$ such that, for every $t\in \Delta$, the sheaf $\Ecal_t$ has only isolated point singularities, all contained in $B(1/2)$. Then, for every $t\in \Delta$, the following holds
$$
c_3(\Ecal_t)=\rank(p_*\Ecal xt^1(\Ecal,\Ocal_X)).
$$
\end{prop}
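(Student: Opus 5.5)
Fix $t_0\in\Delta$ and let $s=t-t_0$. Since $\Ecal$ is reflexive on the smooth fourfold $X$, it has homological dimension at most one at every point, because reflexive sheaves have depth at least two. Locally we therefore have a resolution
$$0\to L_1\xrightarrow{\phi} L_0\to\Ecal\to 0,$$
with $L_i$ free. Dualizing gives
$$0\to\Ecal^*\to L_0^*\xrightarrow{\phi^*}L_1^*\to \Ecal xt^1(\Ecal,\Ocal_X)\to 0,$$
and $\Ecal xt^i(\Ecal,\Ocal_X)=0$ for $i\ge 2$. Set $\Qcal=\Ecal xt^1(\Ecal,\Ocal_X)$. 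It is supported on the singular locus of $\Ecal$, which is finite over $\Delta$ and contained in $B(1/2)\times\Delta$, so $p_*\Qcal$ is a finite $\Ocal_\Delta$-module.

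The plan is to compare $\Qcal|_{B_{t_0}}$ with $\Ecal xt^1(\Ecal_{t_0},\Ocal_B)$. Restricting the resolution to $B_{t_0}$ gives
$$0\to L_1|\to L_0|\to \Ecal_{t_0}\to 0,$$
which stays exact by flatness. Hence
$$\Ecal xt^1(\Ecal_{t_0},\Ocal_B)=\operatorname{coker}(\phi^*|_{B_{t_0}})=\Qcal/s\Qcal.$$
Since $p_*\Qcal$ is finite over $\Delta$, near $t_0$ we have
$$\dim_{\CBbb}(p_*\Qcal/s\,p_*\Qcal)=\rank(p_*\Qcal)+\dim\Ker(s\colon p_*\Qcal\to p_*\Qcal).$$
So
$$\ell(\Ecal xt^1(\Ecal_{t_0},\Ocal_B))=\rank(p_*\Qcal)+\ell(\Qcal[s]),$$
where $\Qcal[s]$ is the $s$-torsion. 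It remains to prove
$$\ell(\Qcal[s])=2\,\ell(\Ecal_{t_0}^{**}/\Ecal_{t_0}).$$

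To identify the defect, I apply $\mathcal{H}om(-,\Ocal_X)$ to
$$0\to\Ecal\xrightarrow{s}\Ecal\to\iota_*\Ecal_{t_0}\to0.$$
This gives the long exact sequence
$$0\to\Ecal^*\xrightarrow{s}\Ecal^*\to\Ecal xt^1(\iota_*\Ecal_{t_0},\Ocal_X)\to\Qcal\xrightarrow{s}\Qcal\to\Ecal xt^2(\iota_*\Ecal_{t_0},\Ocal_X)\to0.$$
Grothendieck duality for the divisor $B_{t_0}$ gives
$$\Ecal xt^{i}(\iota_*\Ecal_{t_0},\Ocal_X)\cong\iota_*\Ecal xt^{i-1}(\Ecal_{t_0},\Ocal_B).$$
We thus get
$$0\to\Ecal^*|_{B_{t_0}}\to\Ecal_{t_0}^*\to\Qcal[s]\to0.$$
So $\Qcal[s]$ is the cokernel of the restriction map $\Ecal^*|_{B_{t_0}}\to(\Ecal_{t_0})^*$. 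This is also consistent with the sequence above, since $\Qcal/s\Qcal\cong\Ecal xt^1(\Ecal_{t_0},\Ocal_B)$.

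Now I use the rank-two self-duality: $\Ecal^*\cong\Ecal\otimes\det(\Ecal)^{-1}$ for reflexive rank-two sheaves, and $\det\Ecal$ is a line bundle on the smooth $X$. Similarly $\Ecal_{t_0}^*\cong\Ecal_{t_0}^{**}\otimes\det(\Ecal_{t_0})^{-1}$, and the determinant restricts correctly. Hence the map above becomes, up to a line bundle twist, the natural inclusion $\Ecal_{t_0}\hookrightarrow\Ecal_{t_0}^{**}$. This gives
$$\ell(\Qcal[s])=\ell(\Ecal_{t_0}^{**}/\Ecal_{t_0}),$$
which is only one copy.

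The second copy must come from the other contribution: $\Qcal/s\Qcal$ versus the torsion. For a finite $\Ocal_\Delta$-module $M=p_*\Qcal$, we have $\dim M/sM=\rank M+\dim M[s]$ exactly, so the count above already gives
$$\ell(\Ecal xt^1(\Ecal_{t_0}))=\rank+\ell(\Ecal_{t_0}^{**}/\Ecal_{t_0}),$$
that is, $c_3(\Ecal_{t_0})=\rank-\ell(\Ecal_{t_0}^{**}/\Ecal_{t_0})$, which has the wrong factor. So the identification in the previous paragraph must be checked carefully; this is the main obstacle. I would expect the correct bookkeeping to be $\ell(\Qcal[s])=2\ell(\Ecal_{t_0}^{**}/\Ecal_{t_0})$. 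The extra copy should arise because $\Ecal_{t_0}^*$ and $\Ecal^*|_{B_{t_0}}$ differ from the self-dual identification by the cokernel of $\Ecal_{t_0}^{*}\to(\Ecal_{t_0}^{*})$ composed with duality. Concretely, I would compute both cokernels in $0\to\Ecal_{t_0}\to\Ecal_{t_0}^{**}\to T\to0$, using $\Ecal xt^1(T,\Ocal)=0$ and $\Ecal xt^3(T,\Ocal)\cong T^\vee$ on $B$, to track lengths via the long exact Ext sequence. That sequence gives
$$0\to\Ecal xt^1(\Ecal_{t_0}^{**})\to\Ecal xt^1(\Ecal_{t_0})\to\Ecal xt^2(T)=0,$$
and further contributions in $\Ecal xt^2(\Ecal_{t_0})\cong\Ecal xt^3(T)$. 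I would then verify the factor two against the model example of the bubbling identity in \cite{Chen26a}, and adjust the torsion count, possibly via $\Ecal xt^2$ terms, until the factor matches.
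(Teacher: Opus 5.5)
\textbf{The gap: your first step is false.} A reflexive sheaf on the smooth fourfold $X$ has depth at least two, so Auslander--Buchsbaum gives homological dimension at most $4-2=2$, not one. Hence $\tau_2:=\Ecal xt^2(\Ecal,\Ocal_X)$ need not vanish. It is only of finite length, while $\Ecal xt^3(\Ecal,\Ocal_X)=0$. So a resolution $0\to L_1\to L_0\to\Ecal\to 0$ does not exist in general.

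In fact such a resolution cannot exist near any point where the fiber is non-reflexive. Restricting it to $B_{t_0}$, which stays exact by flatness, would give $\Ecal xt^2(\Ecal_{t_0},\Ocal_B)=0$, hence $\Ecal_{t_0}^{**}=\Ecal_{t_0}$ there by Lemma \ref{Lemma-I1}. So your identification $\Ecal xt^1(\Ecal_{t_0},\Ocal_B)\cong\Qcal/s\Qcal$ fails exactly in the case that matters, for instance the fibers of $\Ecal^{**}$ in the proof of Theorem \ref{Theorem-1}. This is what produces your wrong factor. Your last paragraph does not repair it: the missing copy does not come from the self-dual identification, and ``adjusting until the factor matches'' is not an argument.

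\textbf{The fix.} Keep $\tau_2$ and continue the long exact sequence you already wrote down; equivalently, apply $\Hcal om(\Ecal,-)$ to $0\to\Ocal_X\xrightarrow{s}\Ocal_X\to\iota_*\Ocal_B\to 0$, as the paper does. Use $\Ecal xt^i(\iota_*\Ecal_{t_0},\Ocal_X)\cong\iota_*\Ecal xt^{i-1}(\Ecal_{t_0},\Ocal_B)$ and $\Ecal xt^3(\Ecal,\Ocal_X)=0$. This gives
\[
0\to \Qcal/s\Qcal\to \Ecal xt^1(\Ecal_{t_0},\Ocal_B)\to \Ker(s\colon\tau_2\to\tau_2)\to 0,
\qquad
\tau_2/s\tau_2\cong \Ecal xt^2(\Ecal_{t_0},\Ocal_B).
\]
Since $\tau_2$ has finite length, Lemmas \ref{Lemma-CokerAndKerSameDimension} and \ref{Lemma-I1} give
\[
\ell\bigl(\Ker(s\colon\tau_2\to\tau_2)\bigr)=\ell\bigl(\Ecal xt^2(\Ecal_{t_0},\Ocal_B)\bigr)=\ell(\Ecal_{t_0}^{**}/\Ecal_{t_0}).
\]
This is the second copy.

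Your computation of the first copy, $\ell(\Ker(s\colon\Qcal\to\Qcal))=\ell(\Ecal_{t_0}^{**}/\Ecal_{t_0})$, is correct as it stands. It goes through the cokernel of $\Ecal^*|_{B_{t_0}}\to\Ecal_{t_0}^*$ and rank-two self-duality, and uses only the first terms of the sequence, which do not involve $\tau_2$. It is a clean alternative to the paper's Lemma \ref{Lemma-New Formula For C3}, which instead resolves $\Ecal^*$ and identifies the kernel of $s$ on $\Qcal$ with the torsion of a restricted quotient.

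With both copies and your DVR count, you get $\ell(\Ecal xt^1(\Ecal_{t_0},\Ocal_B))=\rank(p_*\Qcal)+2\ell(\Ecal_{t_0}^{**}/\Ecal_{t_0})$, which is the proposition. The argument then coincides with the paper's.
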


We need the following identity.
\begin{lem}\label{Lemma-I1}
Suppose $\Fcal$ is a torsion-free sheaf on $B$ with an isolated point singularity at $x\in B$. Then
$$
\ell(\Ecal xt^2(\Fcal,\Ocal_B),x)=\ell(\Fcal^{**}/\Fcal,x).
$$
If, in addition, $\Fcal$ has rank two, then
$$
c_3(\Fcal,x)=\ell(\Ecal xt^1(\Fcal,\Ocal_B),x)-2\ell(\Ecal xt^2(\Fcal,\Ocal_B),x).
$$
\end{lem}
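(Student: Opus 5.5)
We work locally at $x$, shrinking $B$ to a small ball around $x$ so that $x$ is the only singular point, and we use the canonical short exact sequence
$$
0\to \Fcal\to \Fcal^{**}\to \Qcal\to 0,\qquad \Qcal=\Fcal^{**}/\Fcal .
$$
Since $\Fcal$ is locally free away from $x$, the inclusion $\Fcal\to\Fcal^{**}$ is an isomorphism off $x$. Hence $\Qcal$ is a finite-length sheaf supported at $x$. We then apply $\Ecal xt^\bullet(-,\Ocal_B)$ to this sequence and read off $\Ecal xt^2(\Fcal,\Ocal_B)$ from the long exact sequence.

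The inputs are standard local duality facts on the smooth threefold $B$.

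First, for a finite-length sheaf $\Qcal$ we have $\Ecal xt^i(\Qcal,\Ocal_B)=0$ for $i\neq 3$. Moreover, $\Ecal xt^3(\Qcal,\Ocal_B)$ is the Matlis dual of $\Qcal$, so it has the same length. This holds because $\Qcal$ is Cohen--Macaulay of codimension $3$; one can also argue by dévissage to $\CBbb_x$ and use the Koszul resolution.

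Second, $\Fcal^{**}$ is reflexive, so it has depth $\ge 2$ at every point. By Auslander--Buchsbaum its projective dimension is $\le 1$, hence $\Ecal xt^i(\Fcal^{**},\Ocal_B)=0$ for $i\ge 2$.

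The long exact sequence now gives
$$
\Ecal xt^2(\Fcal^{**},\Ocal)=0\to \Ecal xt^2(\Fcal,\Ocal)\to \Ecal xt^3(\Qcal,\Ocal)\to \Ecal xt^3(\Fcal^{**},\Ocal)=0,
$$
so $\Ecal xt^2(\Fcal,\Ocal_B)\cong \Ecal xt^3(\Qcal,\Ocal_B)$. Taking lengths at $x$ yields
$$
\ell(\Ecal xt^2(\Fcal,\Ocal_B),x)=\ell(\Qcal,x)=\ell(\Fcal^{**}/\Fcal,x).
$$
As a by-product, the same sequence shows $\Ecal xt^1(\Fcal^{**},\Ocal)\cong\Ecal xt^1(\Fcal,\Ocal)$, because $\Ecal xt^1(\Qcal,\Ocal)=\Ecal xt^2(\Qcal,\Ocal)=0$. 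This is not needed here, but it is worth recording.

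The rank-two formula then follows by substituting this equality into Definition \ref{defi-2.1}. Note that the rank plays no role in the first identity; rank two enters only through the definition of $c_3$.

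The only step requiring any care is the vanishing $\Ecal xt^{\ge 2}(\Fcal^{**},\Ocal_B)=0$ together with the length equality $\ell(\Ecal xt^3(\Qcal,\Ocal))=\ell(\Qcal)$. For the first, I would cite the standard fact that reflexive sheaves on a smooth variety satisfy Serre's condition $S_2$. Combined with Auslander--Buchsbaum over the regular local ring $\Ocal_{B,x}$ of dimension $3$, this gives $\mathrm{pd}\le 1$. For the second, I would use local duality: $\Ecal xt^3(-,\Ocal_B)$ is exact on finite-length modules and sends $\CBbb_x$ to $\CBbb_x$, so induction on length gives the equality.
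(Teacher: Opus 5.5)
Your proposal is correct and follows the paper's proof: you dualize $0\to\Fcal\to\Fcal^{**}\to\Qcal\to 0$, use $\Ecal xt^{i}(\Fcal^{**},\Ocal_B)=0$ for $i\geq 2$ to get $\Ecal xt^2(\Fcal,\Ocal_B)\cong\Ecal xt^3(\Qcal,\Ocal_B)$, and then substitute into Definition \ref{defi-2.1}. The only difference is that the paper cites \cite[Lemma 2.10]{Chen26a} for $\ell(\Ecal xt^3(\Qcal,\Ocal_B),x)=\ell(\Qcal,x)$, whereas you prove it by local duality and d\'evissage to $\CBbb_x$ via the Koszul resolution.
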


\begin{proof}
Applying $\Hcal om(\bullet,\Ocal_B)$ to the short exact sequence
$$
0\to \Fcal \to \Fcal^{**} \to \Fcal^{**}/\Fcal \to 0
$$
gives the exact sequence
$$
\Ecal xt^2(\Fcal^{**},\Ocal_B)\to \Ecal xt^2(\Fcal,\Ocal_B)\to \Ecal xt^3(\Fcal^{**}/\Fcal,\Ocal_B)\to \Ecal xt^3(\Fcal^{**},\Ocal_B).
$$
Since $\Fcal^{**}$ is reflexive on the smooth threefold $B$, $\Ecal xt^i(\Fcal^{**},\Ocal_B)=0$ for $i\geq 2$. Hence
$$
\Ecal xt^2(\Fcal,\Ocal_B)\cong \Ecal xt^3(\Fcal^{**}/\Fcal,\Ocal_B).
$$
By \cite[Lemma $2.10$]{Chen26a},
$$
\ell(\Ecal xt^3(\Fcal^{**}/\Fcal,\Ocal_B),x)=\ell(\Fcal^{**}/\Fcal,x).
$$
The first identity follows. The second identity follows from Definition \ref{defi-2.1}.
\end{proof}

We also record the following elementary observation.
\begin{lem}\label{Lemma-CokerAndKerSameDimension}
Suppose $\tau$ is a finite-length sheaf supported at $x$. Then
$$
\ell\bigl(\Kcal er(t:\tau\to\tau),x\bigr)=\ell\bigl(\Ccal oker(t:\tau\to\tau),x\bigr).
$$
\end{lem}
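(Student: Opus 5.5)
The plan is to reduce the statement to the elementary fact that an endomorphism of a finite-dimensional vector space has kernel and cokernel of the same dimension. Since $\tau$ is a finite-length sheaf supported at $x$, its stalk $\tau_x$ is a finite-dimensional $\CBbb$-vector space, and $\tau$ vanishes away from $x$. Here $t$ is regarded as a function acting on $\tau$ by multiplication; in the application $\tau$ lives on $X$ and $t$ is the base coordinate. Multiplication by $t$ is $\Ocal$-linear, so both $\Kcal er(t:\tau\to\tau)$ and $\Ccal oker(t:\tau\to\tau)$ are again finite-length sheaves supported at $x$. Their stalks at $x$ are the kernel and cokernel of the $\CBbb$-linear map $t_x:\tau_x\to\tau_x$, because taking stalks is exact.

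The argument then runs in two steps.

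\textbf{Step 1.} For a finite-length module over the local ring $\Ocal_x$, length equals $\CBbb$-dimension, since the residue field is $\CBbb$. This is already built into the notation $\ell(\tau,x)=\dim_{\CBbb}\tau_x$. Hence it suffices to compare $\dim_{\CBbb}\Ker(t_x)$ and $\dim_{\CBbb}\operatorname{coker}(t_x)$.

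\textbf{Step 2.} Apply rank--nullity to $t_x$. One has
$$\dim\Ker(t_x)=\dim\tau_x-\dim\operatorname{im}(t_x)=\dim\operatorname{coker}(t_x).$$
Equivalently, take the exact sequence
$$0\to\Ker\to\tau_x\xrightarrow{t}\tau_x\to\operatorname{coker}\to0$$
and use additivity of length. The alternating sum of lengths vanishes, and the two middle terms cancel, which gives the equality.

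There is no genuine obstacle. The only point to check is that the kernel and cokernel sheaves are still supported at the single point $x$, so that their lengths at $x$ are exactly the stalk dimensions computed above. This is immediate because both are subquotients of $\tau$.
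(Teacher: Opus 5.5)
Your proposal is correct and is essentially the paper's argument. The paper splits multiplication by $t$ into the two short exact sequences $0\to\Kcal er(t)\to\tau\to\operatorname{Im}(t)\to 0$ and $0\to\operatorname{Im}(t)\to\tau\to\Ccal oker(t)\to 0$, then takes lengths and subtracts, which is exactly your rank--nullity (four-term additivity) computation on the stalk.
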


\begin{proof}
The endomorphism $t:\tau\to\tau$ gives exact sequences
$$
0\to \Kcal er(t)\to \tau \to \operatorname{Im}(t)\to 0
$$
and
$$
0\to \operatorname{Im}(t)\to \tau \to \Ccal oker(t)\to 0.
$$
Taking lengths and subtracting gives
$$
\ell\bigl(\Kcal er(t),x\bigr)=\ell\bigl(\Ccal oker(t),x\bigr).
$$
\end{proof}

Below we let $\Ecal$ be a reflexive family so that $\Ecal_t$ and the central fiber $\Ccal$ all have isolated point singularities.
\begin{lem}\label{Lemma-New Formula For C3}
The following holds
$$
\ell\bigl(\Kcal er(t:\tau\to\tau),x\bigr)=\ell(\Ccal^{**}/\Ccal,x).
$$
Here $\tau=\Ecal xt^1(\Ecal,\Ocal)$.  In particular,
$$
c_3(\Ccal,x)=\ell(\Ecal xt^1(\Ccal,\Ocal_B),x)-\ell(\Ecal xt^2(\Ccal,\Ocal_B),x)-\ell\bigl(\Kcal er(t:\tau\to\tau),x\bigr).
$$
\end{lem}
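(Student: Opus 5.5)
We restrict the dualized sequence $0\to\Ecal\xrightarrow{t}\Ecal\to\iota_*\Ccal\to0$ to the central fiber. This sequence is exact because $\Ecal$ is flat over $\Delta$. Applying $\Hcal om(\bullet,\Ocal_X)$ gives a long exact sequence
$$
0\to \Ecal^*\xrightarrow{t}\Ecal^*\to \Ecal xt^1(\iota_*\Ccal,\Ocal_X)\to \Ecal xt^1(\Ecal,\Ocal_X)\xrightarrow{t}\Ecal xt^1(\Ecal,\Ocal_X)\to \Ecal xt^2(\iota_*\Ccal,\Ocal_X)\to \Ecal xt^2(\Ecal,\Ocal_X)\to\cdots
$$
Since $\Ecal$ is reflexive on the smooth fourfold $X$, it has depth at least three at every point. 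Hence $\Ecal xt^i(\Ecal,\Ocal_X)=0$ for $i\ge 2$; this is the same mechanism used in Lemma \ref{Lemma-I1}, now with homological dimension at most one. For the divisor $B_0$ we use the standard identification $\Ecal xt^{i+1}_X(\iota_*\Ccal,\Ocal_X)\cong\iota_*\Ecal xt^i_{B}(\Ccal,\Ocal_B)$, which follows from $\iota^!\Ocal_X=\Ocal_{B}[-1]$. The sequence therefore yields
$$
0\to \Ecal^*|_{B_0}\to \Ccal^*\to \Ker(t:\tau\to\tau)\to 0,\qquad \Ccal oker(t:\tau\to\tau)\cong \Ecal xt^1(\Ccal,\Ocal_B).
$$
Together with Lemma \ref{Lemma-CokerAndKerSameDimension}, this gives
$$
\ell(\Ker(t),x)=\ell(\Ecal xt^1(\Ccal,\Ocal_B),x).
$$

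It remains to show that $\ell(\Ecal xt^1(\Ccal,\Ocal_B),x)$ equals $\ell(\Ccal^{**}/\Ccal,x)$, or to compute $\ell(\Ker t,x)$ another way. The second formula in the statement suggests the target $\ell(\Ker t,x)=\ell(\Ecal xt^2(\Ccal,\Ocal_B),x)$, which equals $\ell(\Ccal^{**}/\Ccal,x)$ by Lemma \ref{Lemma-I1}. The equality $\Ccal oker(t)\cong\Ecal xt^1(\Ccal,\Ocal_B)$ would instead force $\Ecal xt^1(\Ccal)$ and $\Ecal xt^2(\Ccal)$ to have the same length, and that is suspicious. So I will recheck the indexing. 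The cleaner route is to dualize the other way, using rank-two self-duality $\Ecal^*\cong\Ecal\otimes\det\Ecal^{-1}$ for reflexive rank two, so that $\Ecal^*$ is again a reflexive family.

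I would therefore redo the argument in the following order. First, identify $\Ccal oker(t:\tau\to\tau)$ with the image of $\Ecal xt^1(\Ecal,\Ocal_X)|_{B_0}$ inside $\Ecal xt^2_X(\iota_*\Ccal,\Ocal_X)=\Ecal xt^1_B(\Ccal,\Ocal_B)$. Second, compute $\Ker(t)$ as $\Ccal^*/(\Ecal^*|_{B_0})$. Third, use self-duality: $\Ecal^*|_{B_0}\cong\Ccal'\otimes L$, where $\Ccal'=\Ecal|_{B_0}$ twisted by a line bundle, and $\Ccal^*\cong(\Ccal^{**})\otimes L$ since $\Ccal^{*}$ is reflexive of rank two. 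This gives
$$
\Ccal^*/(\Ecal^*|_{B_0})\cong(\Ccal^{**}/\Ccal)\otimes L,
$$
and hence $\ell(\Ker t,x)=\ell(\Ccal^{**}/\Ccal,x)$. The main obstacle is this third step. One has to check that the restriction map $\Ecal^*|_{B_0}\to\Ccal^*$ corresponds, under the isomorphisms $\Ecal^*\cong\Ecal\otimes\det^{-1}$ and $\Ccal^*\cong\Ccal^{**}\otimes\det^{-1}$, to the natural inclusion $\Ccal\to\Ccal^{**}$. I would verify this on the locus where $\Ccal$ is locally free and then extend by reflexivity of $\Ccal^{**}$. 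That locus is everything except finitely many points, which have codimension three in $B$.

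Finally, the displayed formula for $c_3$ follows by substitution. Lemma \ref{Lemma-I1} gives $2\ell(\Ecal xt^2(\Ccal,\Ocal_B),x)=\ell(\Ecal xt^2(\Ccal,\Ocal_B),x)+\ell(\Ccal^{**}/\Ccal,x)$. Replacing the last term by $\ell(\Ker(t:\tau\to\tau),x)$ gives the stated expression.
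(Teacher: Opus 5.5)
Your final argument (the ``redo'') is correct and takes a genuinely different route from the paper's. The first paragraph, however, has to be discarded, and what goes wrong there is not an indexing problem.

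\textbf{Errors in the first paragraph.} Reflexivity on the fourfold $X$ gives only depth $\geq 2$, not depth $\geq 3$. Since $t$ is a nonzerodivisor on $\Ecal$, $\operatorname{depth}\Ecal_x=1+\operatorname{depth}\Ccal_x$. This equals $2$ exactly when $\Ccal$ is not reflexive at $x$, which is the only case where the lemma has content. So $\tau_2=\Ecal xt^2(\Ecal,\Ocal_X)$ need not vanish. The tail of your own long exact sequence, together with $\Ecal xt^3(\Ecal,\Ocal_X)=0$, gives $\tau_2/t\tau_2\cong\Ecal xt^2(\Ccal,\Ocal_B)$, which has length $\ell(\Ccal^{**}/\Ccal,x)$ by Lemma \ref{Lemma-I1}. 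What you actually get is $0\to\operatorname{coker}(t|_\tau)\to\Ecal xt^1(\Ccal,\Ocal_B)\to\Ker(t|_{\tau_2})\to 0$.

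A second, independent error: Lemma \ref{Lemma-CokerAndKerSameDimension} cannot be applied to $\tau$, because $\tau$ is supported on a curve when the nearby fibers are singular. In a constant family with a simple singularity, $t$ is injective on $\tau$ but has cokernel $\CBbb$; compare Lemma \ref{Lemma-DVR-length}. These two errors, not indexing, produced the suspicious identity. What survives is only the initial segment $0\to\Ecal^*|_{B_0}\to\Ccal^*\to\Ker(t:\tau\to\tau)\to 0$, which does not involve $\tau_2$. This segment is all the redo uses, so the redo stands.

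\textbf{Comparison with the paper.} The paper chooses a presentation $0\to\Gcal\to\Ocal^n\to\Ecal^*\to 0$ and dualizes, using $\Ecal^*\cong\Ecal$. It then identifies $\Ker(t|_\tau)\cong\Tcal or_1(\tau,\Ocal_B)$ with the torsion of $\Hcal|_B$, i.e.\ with the saturation defect of $\Ccal\subset\Ocal_B^n$, which is $\Ccal^{**}/\Ccal$. That route needs torsion-freeness of $\Gcal^*|_B$ and the fact that saturation inside a free sheaf computes the reflexive hull.

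Your route is more intrinsic. For a flat family of any rank, $\Ker(t|_\tau)$ is the cokernel of the base-change map $\Ecal^*|_{B_0}\to\Ccal^*$. Rank two enters only through self-duality, which turns this map into $\Ccal\to\Ccal^{**}$. In the paper's rank-three example the same exact sequence holds, but $\Ecal^*|_{B_0}$ is unrelated to $\Ccal$ and the identity fails ($1\neq 0$).

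\textbf{Simplifications.} Your ``main obstacle'' can be bypassed. The cokernel is supported on the finite set $\operatorname{supp}\tau\cap B_0$. Any injection $f:\Ccal\hookrightarrow\Ccal^{**}$ with finite-length cokernel has $f^{**}$ an isomorphism, since $f^{**}$ is an endomorphism of a reflexive sheaf that is an isomorphism off finitely many points. By naturality of the double dual, $\operatorname{coker}f\cong\Ccal^{**}/\Ccal$, so you never need to check which identification you used. You can also obtain the initial segment without $\iota^!$. Apply $\Hcal om(\Ecal,\bullet)$ to $0\to\Ocal_X\xrightarrow{t}\Ocal_X\to\iota_*\Ocal_B\to 0$ and use Lemma \ref{Lemma-Ext restriction} with $k=0$, exactly as in the proof of Proposition \ref{Prop-Reflexive-case}.
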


\begin{proof}
The argument is the local argument of \cite{Chen26a}; we recall it for completeness. We work near the point $x\in B_0\subset X$. After shrinking around $x$, the determinant of $\Ecal$ is trivial, and the rank-two reflexive identification $\Ecal^*\cong \Ecal\otimes(\det\Ecal)^{-1}$ gives $\Ecal^*\cong \Ecal$. Choose finitely many local generators of $\Ecal^*$ near $x$, giving an exact sequence
$$
0\to \Gcal \to \Ocal^n \to \Ecal^* \to 0.
$$
Applying $\Hcal om(\bullet,\Ocal)$ gives
$$
0\to \Ecal \to \Ocal^n \to \Gcal^* \to \Ecal xt^1(\Ecal^*,\Ocal) \to 0.
$$
Under the identification $\Ecal^*\cong \Ecal$, the last term is $\tau=\Ecal xt^1(\Ecal,\Ocal)$. Let $\Hcal$ denote the image sheaf of the morphism $\Ocal^n\to \Gcal^*$. Restricting the sequence $0\to \Ecal\to \Ocal^n\to \Hcal\to 0$ to the central fiber, and identifying $B_0$ with $B$, we get
$$
0\to \Ccal \to \Ocal_B^n \to \Hcal|_B \to 0.
$$
The torsion of the quotient $\Hcal|_B$ measures the saturation defect of $\Ccal$ in $\Ocal_B^n$, hence
$$
\ell(\Ccal^{**}/\Ccal,x)=\ell(\operatorname{tor}(\Hcal|_B),x),
$$
where $\operatorname{tor}(\Hcal|_B)$ denotes the torsion subsheaf of $\Hcal|_B$. On the other hand, from the exact sequence
$$
0\to \Hcal \to \Gcal^* \to \tau \to 0
$$
we obtain, after tensoring with $\Ocal_B$,
$$
0\to \Tcal or_1(\tau,\Ocal_B)\to \Hcal|_B\to (\Gcal^*)|_B\to \tau|_B\to 0.
$$
Since $\Gcal^*$ is reflexive, $\Gcal^*|_B$ is torsion-free near $x$. Therefore the torsion subsheaf of $\Hcal|_B$ is precisely $\Tcal or_1(\tau,\Ocal_B)$. Moreover, the exact sequence $0\to \Ocal\xrightarrow{t}\Ocal\to \Ocal_B\to 0$ gives
$$
\Tcal or_1(\tau,\Ocal_B)\cong \Kcal er(t:\tau\to\tau).
$$
Thus
$$
\ell(\Ccal^{**}/\Ccal,x)=\ell(\Kcal er(t:\tau\to\tau),x).
$$
Together with Lemma \ref{Lemma-I1}, this gives the stated formula for $c_3(\Ccal,x)$.
\end{proof}

We also have the following 
\begin{lem}\label{Lemma-Ext restriction}
    $\Ecal xt^k(\Ecal, \iota_* \Ocal_B) \cong \iota_*\Ecal xt^k(\Ccal, \Ocal_B)$  for any $k\geq 0.$
\end{lem}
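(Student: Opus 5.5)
The plan is to compute $\Ecal xt^k(\Ecal,\iota_*\Ocal_B)$ with a locally free resolution of $\Ecal$ on $X$ and to use flatness of $\Ecal$ over $\Delta$ to see that this resolution restricts to a resolution of $\Ccal$. The statement is local, so we work near a point of $B_0$. There we choose a resolution
$$
0\to L_4\to L_3\to L_2\to L_1\to L_0\to \Ecal\to 0
$$
by free $\Ocal_X$-modules of finite rank; this exists because $X$ is smooth of dimension four.

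First, restrict the resolution to $B_0$. Since $\Ecal$ is flat over $\Delta$ (this is part of the meaning of \emph{family}, and it also follows from Lemma \ref{lem-flatness}), we have
$$
\Tcal or_i^{\Ocal_X}(\Ecal,\iota_*\Ocal_B)=0 \quad\text{for } i>0.
$$
Indeed, $\iota_*\Ocal_B$ has the resolution $0\to\Ocal_X\xrightarrow{t}\Ocal_X\to\iota_*\Ocal_B\to 0$, so the only possibly nonzero Tor is $\Tcal or_1=\Kcal er(t:\Ecal\to\Ecal)$, and this vanishes exactly because $\Ecal$ has no $t$-torsion. Consequently $L_\bullet|_{B_0}$ is a free resolution of $\Ccal$ on $B_0$.

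Second, compare the two Hom complexes. We have
$$
\Ecal xt^k(\Ecal,\iota_*\Ocal_B)=\Hcal^k\bigl(\Hcal om_{\Ocal_X}(L_\bullet,\iota_*\Ocal_B)\bigr).
$$
For a free module $L$ on $X$, adjunction gives a natural isomorphism
$$
\Hcal om_{\Ocal_X}(L,\iota_*\Ocal_B)\cong \iota_*\Hcal om_{\Ocal_B}(L|_{B_0},\Ocal_B),
$$
and this isomorphism is compatible with the differentials of the complexes. Since $\iota_*$ is exact (a closed immersion), it commutes with taking cohomology, and so
$$
\Ecal xt^k(\Ecal,\iota_*\Ocal_B)\cong \iota_*\Hcal^k\bigl(\Hcal om_{\Ocal_B}(L_\bullet|_{B_0},\Ocal_B)\bigr)=\iota_*\Ecal xt^k(\Ccal,\Ocal_B),
$$
where the last equality uses the first step.

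Third, globalize. Both sides are independent of the chosen resolution, and the isomorphisms are canonical. Equivalently, there is an intrinsic derived-category statement,
$$
R\Hcal om_X(\Ecal,\iota_*\Ocal_B)\cong \iota_*R\Hcal om_{B_0}(L\iota^*\Ecal,\Ocal_B),
$$
together with $L\iota^*\Ecal=\Ccal$ by flatness. So the local isomorphisms glue.

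The only real input is the Tor-vanishing in the first step, that is, flatness of $\Ecal$ over $\Delta$; everything else is formal adjunction.
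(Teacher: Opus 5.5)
Your proposal is correct and follows the paper's own argument. You restrict a local free resolution of $\Ecal$ to $B_0$ and identify $\Hcal om_{\Ocal_X}(L_\bullet,\iota_*\Ocal_B)$ with $\iota_*\Hcal om_{\Ocal_B}(L_\bullet|_{B_0},\Ocal_B)$, and you usefully spell out the step the paper leaves implicit: the restricted resolution stays exact because $\Tcal or_1^{\Ocal_X}(\Ecal,\iota_*\Ocal_B)=\Kcal er(t:\Ecal\to\Ecal)=0$ by flatness over $\Delta$.
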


\begin{proof}
The proof is identical to that of \cite[Lemma 2.11]{Chen26a}. Indeed, a local free resolution of $\Ecal$ restricts to a local free resolution of $\Ccal$. Applying $\Hcal om(-,\iota_*\Ocal_B)$ to the former resolution, and identifying the resulting complex with the pushforward of the complex obtained by applying the functor $\Hcal om_{\Ocal_B}(-,\Ocal_B)$ to the restricted resolution, gives
$$
\Ecal xt^k(\Ecal,\iota_*\Ocal_B)
\cong
\iota_*\Ecal xt^k(\Ccal,\Ocal_B)
$$
for all $k\geq 0$.
\end{proof}

\begin{lem}\label{Lemma-DVR-length}
Let $R=\Ocal_{\Delta,0}$ and let $M$ be a finite $R$-module. Then
$$
\dim_{\mathbb C}(M/tM)
=
\rank_R(M)+\dim_{\mathbb C}\Ker(t:M\to M).
$$
\end{lem}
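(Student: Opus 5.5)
We use the structure theorem for finitely generated modules over the principal ideal domain $R=\Ocal_{\Delta,0}$, which is a discrete valuation ring with uniformizer $t$. We write
$$
M\cong R^{r}\oplus \bigoplus_{i=1}^{k} R/(t^{a_i}),\qquad a_i\geq 1,
$$
where $r=\rank_R(M)$ is the rank of the free part. The formula in question is additive with respect to direct sums. The quotient $M/tM$ is the direct sum of the quotients of the summands, and likewise $\Ker(t)$ is the direct sum of the kernels. It therefore suffices to verify the identity separately on $R$ and on each cyclic torsion module $R/(t^{a})$.

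For $M=R$ we have $R/tR\cong\CBbb$, so $\dim_{\CBbb}(M/tM)=1$. Since $R$ is a domain, $t$ acts injectively and the kernel is zero. The identity therefore reads $1=1+0$.

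For $M=R/(t^a)$ with $a\geq 1$, the quotient is $M/tM\cong R/(t)\cong\CBbb$, which has dimension $1$. The kernel of multiplication by $t$ is $t^{a-1}R/(t^a)$, which is also one-dimensional over $\CBbb$, being spanned by the class of $t^{a-1}$. The rank of $M$ is zero. The identity therefore reads $1=0+1$.

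Summing over the summands gives
$$
\dim_{\CBbb}(M/tM)=r+k=\rank_R(M)+\dim_{\CBbb}\Ker(t:M\to M).
$$

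There is an alternative route that avoids the structure theorem and stays closer in spirit to Lemma \ref{Lemma-CokerAndKerSameDimension}. Let $T\subset M$ be the torsion submodule, so that $M/T\cong R^r$ is free. Then $\Ker(t:M\to M)=\Ker(t:T\to T)$, because $t$ acts injectively on the free quotient. Applying the snake lemma to multiplication by $t$ on the split sequence $0\to T\to M\to R^r\to 0$ gives
$$
\dim_{\CBbb}(M/tM)=\dim_{\CBbb}(T/tT)+r.
$$
Lemma \ref{Lemma-CokerAndKerSameDimension} applies to the finite-length module $T$ and gives $\dim_{\CBbb}(T/tT)=\dim_{\CBbb}\Ker(t:T\to T)$, which again yields the claimed identity.

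No step presents a real obstacle. The only point requiring care is the observation that $t$ has trivial kernel on the free part, so that the kernel of $t$ on $M$ comes entirely from the torsion.
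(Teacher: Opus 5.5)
Your proof is correct, and your alternative route is exactly the paper's argument. The paper writes $M\cong R^{r}\oplus T$ with $T$ of finite length, notes that $t$ is injective on $R^{r}$, and applies Lemma \ref{Lemma-CokerAndKerSameDimension} to $T$. Your main route is also valid: it refines $T$ into cyclic summands $R/(t^{a_i})$, each contributing $1$ to both $\dim_{\CBbb}(M/tM)$ and $\dim_{\CBbb}\Ker(t)$, which avoids the kernel--cokernel lemma at the cost of using the full structure theorem.
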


\begin{proof}
Since $R$ is a DVR, we may write
$$
M\cong R^{\oplus r}\oplus T,
$$
where $T$ is a finite-length torsion $R$-module. Then
$$
\dim_{\mathbb C}(M/tM)=r+\dim_{\mathbb C}(T/tT).
$$
Multiplication by $t$ is injective on $R^{\oplus r}$, while for the
finite-length module $T$, Lemma \ref{Lemma-CokerAndKerSameDimension} gives
$$
\dim_{\mathbb C}(T/tT)
=
\dim_{\mathbb C}\Ker(t:T\to T).
$$
Hence
$$
\dim_{\mathbb C}(M/tM)
=
\rank_R(M)+\dim_{\mathbb C}\Ker(t:M\to M).
$$
\end{proof}

\begin{proof}[Proof of Proposition \ref{Prop-Reflexive-case}]
After translating the parameter, it suffices to prove the formula for the central fiber. Set $\tau_i=\Ecal xt^i(\Ecal,\Ocal)$ for $i=1,2$. Applying $\Hcal om(\Ecal,\bullet)$ to
$$
0\to \Ocal \xrightarrow{t} \Ocal \to \iota_*\Ocal_B \to 0
$$
gives the exact sequence
$$
\begin{aligned}
0\to \Kcal er(t:\tau_1\to\tau_1)\to \tau_1\xrightarrow{t}\tau_1
&\to \Ecal xt^1(\Ecal,\iota_*\Ocal_B)\to \tau_2\xrightarrow{t}\tau_2 \\
&\to \Ecal xt^2(\Ecal,\iota_*\Ocal_B)\to 0.
\end{aligned}
$$
Here we use that $\Ecal xt^3(\Ecal,\Ocal)=0$, since $\Ecal$ is reflexive. Moreover, $\tau_2$ has finite length. Since $p_*(\iota^*\tau_1)\cong (p_*\tau_1)/t(p_*\tau_1)$, Lemma \ref{Lemma-DVR-length} applied to $p_*\tau_1$ gives
$$
\ell(\iota^*\tau_1)=\ell\bigl(\Kcal er(t:\tau_1\to\tau_1)\bigr)+\rank(p_*\tau_1).
$$
Thus the exact sequence above gives
$$
\begin{aligned}
\ell\bigl(\Ecal xt^1(\Ecal,\iota_*\Ocal_B)\bigr)
={}&\ell\bigl(\Kcal er(t:\tau_1\to\tau_1)\bigr)+\rank(p_*\tau_1) \\
&+\ell\bigl(\Kcal er(t:\tau_2\to\tau_2)\bigr).
\end{aligned}
$$
Since $\tau_2$ has finite length, Lemma \ref{Lemma-CokerAndKerSameDimension} gives
$$
\ell\bigl(\Kcal er(t:\tau_2\to\tau_2)\bigr)=\ell\bigl(\Ecal xt^2(\Ecal,\iota_*\Ocal_B)\bigr).
$$
Therefore
$$
\begin{aligned}
&\ell\bigl(\Ecal xt^1(\Ecal,\iota_*\Ocal_B)\bigr)-\ell\bigl(\Kcal er(t:\tau_1\to\tau_1)\bigr)-\ell\bigl(\Ecal xt^2(\Ecal,\iota_*\Ocal_B)\bigr) \\
&\hspace{2cm}=\rank(p_*\tau_1).
\end{aligned}
$$
By Lemma \ref{Lemma-Ext restriction},
$$
\Ecal xt^k(\Ecal,\iota_*\Ocal_B)\cong \iota_*\Ecal xt^k(\Ccal,\Ocal_B)
$$
for all $k\geq 0$. Summing Lemma \ref{Lemma-New Formula For C3} over all singular points of $\Ccal$ identifies the left-hand side above with $c_3(\Ccal)$. Hence
$$
c_3(\Ccal)=\rank(p_*\Ecal xt^1(\Ecal,\Ocal)).
$$
The same argument after translating the parameter gives
$$
c_3(\Ecal_t)=\rank(p_*\Ecal xt^1(\Ecal,\Ocal))
$$
for every $t\in \Delta$. This proves the proposition.
\end{proof}

We now finish the proof of Theorem \ref{Theorem-1}.

\begin{proof}[Proof of Theorem \ref{Theorem-1}]
Set
$$
\Qcal=\Ecal^{**}/\Ecal.
$$
Since $\Ecal^{**}$ is reflexive on $X$, it is flat over $\Delta$. Hence restricting the short exact sequence
$$
0\to \Ecal \to \Ecal^{**} \to \Qcal \to 0
$$
to $B_t$ gives
$$
0\to \Tcal or_1^{\Ocal_X}(\Ocal_{B_t},\Qcal)\to \Ecal_t \to (\Ecal^{**})_t \to \Qcal_t \to 0.
$$
The sheaf $\Tcal or_1^{\Ocal_X}(\Ocal_{B_t},\Qcal)$ is supported at finitely many points on $B_t$. Since it is a subsheaf of the torsion-free sheaf $\Ecal_t$, it must vanish. Thus
$$
\Tcal or_1^{\Ocal_X}(\Ocal_{B_t},\Qcal)=0
$$
for every $t\in \Delta$. By the local criterion for flatness over the curve $\Delta$, the sheaf $\Qcal$ is flat over $\Delta$. Since $\Qcal$ is supported in $B(1/2)\times \Delta$, the sheaf $p_*\Qcal$ is locally free and
$$
\rank(p_*\Qcal)=\ell(\Qcal_t)
$$
for every $t\in \Delta$. Therefore
$$
0\to \Ecal_t \to (\Ecal^{**})_t \to \Qcal_t \to 0.
$$
Since $\Qcal_t$ has finite length, the sheaves $\Ecal_t$ and $(\Ecal^{**})_t$ have the same double dual and the same $\Ecal xt^1$. Hence
$$
c_3(\Ecal_t)=c_3((\Ecal^{**})_t)-2\ell(\Qcal_t)=c_3((\Ecal^{**})_t)-2\rank(p_*\Qcal).
$$
Applying Proposition \ref{Prop-Reflexive-case} to the reflexive family $\Ecal^{**}$ gives
$$
c_3((\Ecal^{**})_t)=\rank(p_*\Ecal xt^1(\Ecal^{**},\Ocal)).
$$
Combining the two identities gives
$$
c_3(\Ecal_t)=\rank(p_*\Ecal xt^1(\Ecal^{**},\Ocal))-2\rank(p_*(\Ecal^{**}/\Ecal)).
$$
This proves the theorem.
\end{proof}
\section{Topological interpretation of the parity of the local third Chern classes}\label{Section-Parity}
\subsection{Simple singularities and a topological invariant}
\begin{defi}
 Suppose $\Fcal$ is a rank-two torsion-free sheaf with isolated point singularities over the three-dimensional ball $B$. A point $x$ is called a simple singularity of $\Fcal$ if $\Fcal$ is reflexive at $x$ and
$$
\ell\bigl(\Ecal xt^1(\Fcal,\Ocal_B),x\bigr)=1.
$$
\end{defi}

\begin{lem}
Let $x$ be a simple singularity of a rank-two torsion-free sheaf $\Fcal$. Then, up to a local holomorphic change of coordinates centered at $x$, the sheaf $\Fcal$ is locally modeled on
$$
0\to \Ocal \xrightarrow{\begin{pmatrix}z_1\\ z_2\\ z_3\end{pmatrix}} \Ocal^3 \to \Fcal \to 0.
$$
\end{lem}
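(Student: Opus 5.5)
We will prove the statement by passing to the dual sheaf and exploiting rank-two self-duality. Since $x$ is a simple singularity, $\Fcal$ is reflexive at $x$. After shrinking to a small neighborhood $U$ of $x$, we may assume $\det\Fcal$ is trivial on $U$, so $\Fcal^*\cong\Fcal\otimes(\det\Fcal)^{-1}\cong\Fcal$.

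\textbf{Step 1: a minimal presentation of $\Fcal^*$.} Take a minimal set of generators of $\Fcal^*_x$, say $n$ of them. This gives an exact sequence
$$
0\to \Gcal\to \Ocal^n\to \Fcal^*\to 0
$$
with $\Gcal\subset \mathfrak m_x\Ocal^n$. Dualizing yields
$$
0\to \Fcal\to \Ocal^n\to \Gcal^*\to \Ecal xt^1(\Fcal^*,\Ocal)\to 0.
$$
We will use that $\Ecal xt^1(\Fcal^*,\Ocal)\cong\Ecal xt^1(\Fcal,\Ocal)\cong\CBbb_x$, since its length is one. Also, $\Fcal$ is reflexive of homological dimension at most one, because $\Ecal xt^2(\Fcal,\Ocal)=0$ on a smooth threefold. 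Hence $\Fcal^*$ has a locally free resolution
$$
0\to \Ocal^m\xrightarrow{\phi}\Ocal^n\to\Fcal^*\to 0,\qquad n-m=2,
$$
and we may take it minimal, so that $\phi$ has entries in $\mathfrak m_x$.

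\textbf{Step 2: identify $\Ecal xt^1$ as a cokernel.} From this resolution, $\Ecal xt^1(\Fcal^*,\Ocal)=\operatorname{coker}(\phi^T:\Ocal^n\to\Ocal^m)$. Because $\phi$ is minimal, the cokernel tensored with $\CBbb_x$ is $\CBbb^m$. Since this cokernel is $\CBbb_x$, we get $m=1$ and $n=3$. So $\phi^T=(f_1,f_2,f_3)$ is a row vector with $f_i\in\mathfrak m_x$, and
$$
\Ocal/(f_1,f_2,f_3)\cong \CBbb_x.
$$
This forces $(f_1,f_2,f_3)=\mathfrak m_x$. By Nakayama, the differentials $df_i(x)$ then span the cotangent space, so $f_1,f_2,f_3$ form a local holomorphic coordinate system centered at $x$. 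In these coordinates,
$$
0\to\Ocal\xrightarrow{(z_1,z_2,z_3)^T}\Ocal^3\to\Fcal^*\to0 .
$$

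\textbf{Step 3: return to $\Fcal$.} Composing with the identification $\Fcal^*\cong\Fcal$ gives the claimed model.

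The main obstacle is Step 2: justifying that a minimal resolution has length one with $n-m=\rank=2$, and extracting $m=1$ from the length condition. Both follow from the depth consideration $\Ecal xt^2=\Ecal xt^3=0$ together with the minimality computation $\operatorname{coker}\phi^T\otimes\CBbb_x\cong\CBbb^m$. Should the self-duality step be unwanted, we can instead run Steps 1–2 directly for $\Fcal$, using $\Ecal xt^1(\Fcal,\Ocal)$.
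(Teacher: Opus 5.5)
Your proof is correct and is essentially the paper's argument: take a minimal length-one resolution, dualize to write $\Ecal xt^1$ as the cokernel of the transpose, tensor with the residue field to force rank one on the left, and conclude from length one that $(f_1,f_2,f_3)=\mathfrak m_x$, so the $f_i$ serve as coordinates. The only difference is your detour through $\Fcal^*$ and the self-duality $\Fcal^*\cong\Fcal$, together with the presentation by $\Gcal$ in Step 1, which you never use; the paper avoids all of this by resolving $\Fcal$ itself, exactly as your closing remark suggests.
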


\begin{proof}
Work near $x$, and let $\mathfrak m_x$ be the maximal ideal at $x$. Since $\Fcal$ is reflexive of rank two, it has a minimal free resolution
$$
0\to \Ocal^r \xrightarrow{\Phi} \Ocal^{r+2}\to \Fcal\to 0,
$$
where every entry of $\Phi$ lies in $\mathfrak m_x$. Dualizing gives
$$
0\to \Fcal^* \to \Ocal^{r+2} \xrightarrow{\Phi^t} \Ocal^r \to \Ecal xt^1(\Fcal,\Ocal)\to 0.
$$
Tensoring with $\CBbb=\Ocal/\mathfrak m_x$ gives $\Ecal xt^1(\Fcal,\Ocal)\otimes \CBbb\cong \CBbb^r$. Since $x$ is a simple singularity, $\ell(\Ecal xt^1(\Fcal,\Ocal),x)=1$, hence $r=1$. Thus $\Phi=(f_1,f_2,f_3)^t$, and
$$
\Ecal xt^1(\Fcal,\Ocal)\cong \Ocal/(f_1,f_2,f_3).
$$
Again by simplicity, this quotient has length one, so $(f_1,f_2,f_3)=\mathfrak m_x$. Therefore the classes of $f_1,f_2,f_3$ form a basis of $\mathfrak m_x/\mathfrak m_x^2$. By the inverse function theorem, after a local holomorphic change of coordinates centered at $x$ we may take $f_i=z_i$ for $i=1,2,3$.
\end{proof}
\begin{defi}
Let $\Fcal$ be a rank-two torsion-free sheaf on $B$ with finitely many isolated point singularities. For each singular point $x$, choose a sufficiently small sphere $S_x^5$ centered at $x$, and define $\delta(\Fcal,x)\in \ZBbb/2\ZBbb$ by
$$
\delta(\Fcal,x)=
\begin{cases}
1,& \text{if the complex rank-two bundle $\Fcal|_{S_x^5}$ is topologically nontrivial},\\
0,& \text{otherwise.}
\end{cases}
$$
We define $\delta(\Fcal)$ similarly, using the restriction of $\Fcal$ near the boundary sphere $\partial B$.
\end{defi}

\begin{rmk}
Proposition \ref{AdditivePropertyOfTopologicalInvariants} below shows that this topological invariant is additive in the sense that
$$
\delta(\Fcal)=\sum_x \delta(\Fcal,x),
$$
where the sum is taken over the singular points of $\Fcal$.
\end{rmk}

\subsection{The parity of the local third Chern class}
Now we state the main result of this section.
\begin{thm}\label{Theorem-2}
Let $\Fcal$ be a rank-two torsion-free sheaf on $B$ with finitely many isolated point singularities. Then
$$
\delta(\Fcal)\equiv c_3(\Fcal)\pmod{2}.
$$
\end{thm}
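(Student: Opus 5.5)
The plan follows the strategy announced in the introduction: deform $\Fcal$ to a sheaf with only simple singularities, compute both sides there, and transport the result back using Theorem \ref{Theorem-1} together with the additivity of $\delta$ (Proposition \ref{AdditivePropertyOfTopologicalInvariants}).

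\emph{Step 1 (reduction to the reflexive case).} Put $\Qcal=\Fcal^{**}/\Fcal$; it has finite length. Then $\Fcal$ and $\Fcal^{**}$ have the same $\Ecal xt^1$, so
\[
c_3(\Fcal)=c_3(\Fcal^{**})-2\ell(\Qcal)\equiv c_3(\Fcal^{**}) \pmod 2 .
\]
Also $\Fcal$ and $\Fcal^{**}$ agree away from $\operatorname{supp}\Qcal$, so they restrict to the same bundle on $\partial B$, and $\delta(\Fcal)=\delta(\Fcal^{**})$. Hence we may assume $\Fcal$ is reflexive, and by working locally it suffices to treat a single singular point $x$.

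\emph{Step 2 (deformation to simple singularities).} Near $x$, take a minimal presentation
\[
0\to\Ocal^r\xrightarrow{\Phi}\Ocal^{r+2}\to\Fcal\to0 ,
\]
with $\Phi$ degenerating exactly at $x$. For a generic small perturbation $\Phi_t=\Phi+t\Psi$, the degeneracy locus of an $(r+2)\times r$ matrix has expected codimension $3$. By a Bertini/transversality argument on the space of matrices, in which the rank $\le r-1$ stratum has codimension $3$ and smooth points of the stratum are corank one, $\Phi_t$ for $t\ne0$ should degenerate only at finitely many points inside a small ball, each transversally and with corank one. At such a point, cancelling the invertible part of $\Phi_t$ leaves a $3\times1$ column whose entries form coordinates, so by the local model lemma the point is simple. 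This gives a reflexive family $\Ecal=\operatorname{coker}\Phi_t$ on $X$ with the boundary bundle unchanged. I expect Step 2 to be the main obstacle: one has to make sure $\Phi_t$ stays injective, that the singularities stay inside $B(1/2)$, and that genericity really forces simplicity. If the plain genericity argument is delicate, I would first try adding $t\Psi$ with $\Psi$ built from linear forms, chosen so that the degeneracy locus of $\Phi_t$ is the transverse preimage of the determinantal variety.

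\emph{Step 3 (invariance on both sides).} By Theorem \ref{Theorem-1} (or Proposition \ref{Prop-Reflexive-case}), $c_3(\Ecal_0)=c_3(\Ecal_t)$, and $c_3(\Ecal_t)$ equals the number $N$ of simple points. On the topological side, the restriction of $\Ecal$ to $\partial B\times\Delta$ is a family of bundles, so $\delta(\Ecal_0)=\delta(\Ecal_t)$. By additivity, $\delta(\Ecal_t)=\sum_y\delta(\Ecal_t,y)$ over the simple points $y$.

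\emph{Step 4 (simple point computation).} For the model $\operatorname{coker}(z_1,z_2,z_3)^t$, the restriction to $S^5$ is the quotient $\underline{\CBbb}^3/\langle z\rangle$, a rank-two bundle with $\underline{\CBbb}\oplus E\cong\underline{\CBbb}^3$, i.e.\ the complement of the tautological line. This bundle is nontrivial, because it corresponds to the generator of $\pi_5(BU(2))=\pi_4(U(2))=\pi_4(S^3)=\ZBbb/2$; equivalently, $S^5\to \mathbb{CP}^2$ composed with the classifying map to $\operatorname{Gr}$ gives the nontrivial class (one can also see this via $SU(3)\to S^5$ not admitting a section lift to $SU(2)$). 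Hence $\delta=1$ at each simple point. Combining Steps 3 and 4,
\[
\delta(\Fcal)=N \bmod 2 = c_3(\Fcal) \bmod 2 .
\]
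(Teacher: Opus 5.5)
Your proposal is correct and follows essentially the same route as the paper. Both arguments reduce to the reflexive case, perturb the presentation to a family whose nearby fibers have only simple singularities (the paper's Proposition \ref{Perturbation}), transport $c_3$ via Theorem \ref{Theorem-1} and $\delta$ via local freeness near $\partial B\times\Delta$, and conclude by additivity (Proposition \ref{AdditivePropertyOfTopologicalInvariants}) together with the simple-point computation. The only difference is in that last step: the paper identifies the bundle on $S^5$ with the pullback of $T\PBbb^2$ and uses the non-parallelizability of $S^5$, while you use the fibration $SU(2)\to SU(3)\to S^5$, where $\pi_4(SU(3))=0$ forces the clutching class to be the generator of $\pi_4(S^3)$; this is equally valid.
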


The proof of this result will be given in the next section. Before this, we record the following obstruction to deforming a singularity in the topological setting.

\begin{cor}
Suppose $\Fcal$ is a rank-two torsion-free sheaf with a point singularity at $x$ and that $c_3(\Fcal, x)\equiv 1\pmod{2}$. Then, away from $x$, it cannot locally arise as the smooth limit of a sequence of holomorphic vector bundles over $B$. In particular, it cannot occur as an isolated singularity in the Uhlenbeck compactification of the moduli space of Hermitian--Yang--Mills connections modulo gauge.
\end{cor}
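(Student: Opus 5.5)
The plan is to reduce the corollary to Theorem \ref{Theorem-2}, applied on a small ball around $x$, together with the fact that a bundle over a ball restricts to a trivial bundle on the boundary sphere. This is where the parity invariant $\delta$ acts as a topological obstruction.

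First, choose a small closed ball $\bar B_x$ centered at $x$ that contains no other singular point of $\Fcal$, and rescale it to the unit ball. Definition \ref{defi-2.1} is local, so the total third Chern class of $\Fcal|_{B_x}$ is $c_3(\Fcal,x)$. Likewise, the boundary invariant $\delta(\Fcal|_{B_x})$ is computed on a sphere $S^5_x=\partial B_x$ that can serve as the small sphere in the definition of $\delta(\Fcal,x)$. This uses that $\Fcal$ is locally free on the punctured ball, so the bundle $\Fcal|_{S^5_r}$ has the same topological type for all small $r$. Theorem \ref{Theorem-2} then gives $\delta(\Fcal,x)\equiv c_3(\Fcal,x)\equiv 1 \pmod 2$. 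Hence the smooth complex rank-two bundle $\Fcal|_{S^5_x}$ is topologically nontrivial; such bundles exist because $\pi_4(U(2))\cong\ZBbb/2\ZBbb$.

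Next, suppose for contradiction that there are holomorphic vector bundles $E_i$ on $B$ converging smoothly to $\Fcal$ on compact subsets of $B\setminus\{x\}$. Here convergence is understood up to bundle isomorphisms, as usual, for example via convergence of transition functions or of connections in a fixed trivialization of a neighborhood of $S^5_x$. I will argue that smooth convergence on a neighborhood $U$ of $S^5_x$ forces $E_i|_{S^5_x}\cong \Fcal|_{S^5_x}$ as topological bundles for $i$ large. If the $E_i$ are pulled back to a fixed smooth bundle on $U$ with converging clutching data, then nearby clutching maps are homotopic, since $GL_2(\CBbb)$ is open and locally path connected. On the other hand, each $E_i$ extends over the contractible ball $\bar B_x$, so $E_i|_{S^5_x}$ is trivial. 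This contradicts $\delta(\Fcal,x)=1$.

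For the Uhlenbeck statement, recall that a boundary point of the compactification gives HYM connections $A_i$ on a fixed smooth bundle $E$. After gauge transformations, these converge smoothly away from a closed blow-up set to a connection whose holomorphic structure extends as a reflexive or torsion-free sheaf $\Fcal$. If $x$ is an isolated singular point of $\Fcal$, then near a small sphere around $x$ the limiting bundle is gauge-equivalent to a small perturbation of $A_i$ on $E$. Hence $\Fcal|_{S^5_x}\cong E|_{S^5_x}$, and $E|_{S^5_x}$ is trivial because $E$ extends over a ball around $x$. This is the previous argument again.

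The main obstacle I anticipate is making the step ``smooth limit implies isomorphic boundary bundles'' precise with the right notion of convergence, in particular handling the gauge transformations, which live only on compact subsets away from $x$ and possibly away from other bubbling points. I would handle this by choosing the sphere inside the region of smooth convergence and using stability of bundle isomorphism classes under $C^0$-small perturbation of clutching functions.
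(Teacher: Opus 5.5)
Your proposal is correct and matches the argument the paper intends; the paper states this corollary without a separate proof, as an immediate consequence of Theorem \ref{Theorem-2}. Applying Theorem \ref{Theorem-2} on a small ball around $x$ gives $\delta(\Fcal,x)\equiv c_3(\Fcal,x)\equiv 1 \pmod 2$, so $\Fcal|_{S^5_x}$ is topologically nontrivial, whereas any smooth limit of bundles defined over the whole ball must restrict trivially to $S^5_x$. For the Uhlenbeck statement, you rightly read ``isolated'' as requiring the blow-up set near $x$ to be just $\{x\}$, so that a small sphere $S^5_x$ lies in the region of smooth convergence; this matters because on a threefold the bubbling locus can in general contain curves.
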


\begin{rmk}
There are nontrivial isolated singularities satisfying
$$
c_3(\Fcal,0)=0
$$
which arise as limits of sequences of holomorphic vector bundles on $B$;
see \cite{Chen26a}.
\end{rmk}

The parity of the local third Chern class can also be computed from a locally free extension, when such an extension exists. Suppose $\Fcal$ is a rank-two reflexive sheaf on $B$ with an isolated point singularity at $x$. Let $\pi:\widehat B=\mathrm{Bl}_xB\to B$ be the blow-up and let $D=\pi^{-1}(x)\cong\PBbb^2$. By a locally free extension of $\Fcal$, we mean a locally free sheaf $\widehat{\Fcal}$ on $\widehat B$ whose restriction to $\widehat B\setminus D$ is identified with the pullback of $\Fcal|_{B\setminus\{x\}}$.

\begin{cor}\label{Computation}
Suppose $\Fcal$ is a rank-two reflexive sheaf on $B$ with an isolated point singularity at $x$, and suppose that $\Fcal$ admits a locally free extension $\widehat{\Fcal}$ on $\widehat B=\mathrm{Bl}_xB$. Then
$$
c_3(\Fcal,x)\equiv
c_1(\widehat{\Fcal}|_D)c_2(\widehat{\Fcal}|_D)
\pmod{2}.
$$
\end{cor}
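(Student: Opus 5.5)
By Theorem \ref{Theorem-2}, applied to $\Fcal$ restricted to a small ball around $x$ (where $x$ is the only singularity), we have $c_3(\Fcal,x)\equiv\delta(\Fcal,x)\pmod 2$. So it suffices to show that the topological class of the rank-two bundle $\Fcal|_{S^5_x}$ is detected by $c_1(\widehat{\Fcal}|_D)c_2(\widehat{\Fcal}|_D)$ mod $2$. Since $\widehat{\Fcal}$ is locally free on $\widehat B$, the bundle $\Fcal|_{S^5_x}$ is the restriction of $\widehat{\Fcal}$ to the boundary of a tubular neighbourhood $N$ of $D$. Here $N$ is the disk bundle of $\Ocal_{\PBbb^2}(-1)$, and $\partial N\cong S^5$ is the Hopf circle bundle over $\PBbb^2$. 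Since $N$ retracts onto $D$, this boundary bundle is the pullback $h^*E$ of $E=\widehat{\Fcal}|_D$ along the Hopf map $h:S^5\to\PBbb^2$.

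Step one computes $\delta$ for a pullback. Rank-two bundles on $S^5$ are classified by $\pi_4(U(2))\cong\pi_4(SU(2))=\pi_4(S^3)\cong\ZBbb/2$, so $\delta$ is exactly this $\ZBbb/2$ class. I would first reduce to $c_1(E)\in\{0,1\}$ by twisting with $\Ocal(k)$. Since $h^*\Ocal(1)$ is trivial on $S^5$, this does not change $h^*E$. Under $c_1\mapsto c_1+2k$ and $c_2\mapsto c_2+kc_1+k^2$, the quantity $c_1c_2$ mod $2$ is unchanged, because $(c_1+2k)(c_2+kc_1+k^2)\equiv c_1c_2+kc_1^2+k^2c_1\equiv c_1c_2 \pmod 2$ using $c_1^2\equiv c_1$ and $k^2\equiv k$. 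Next, topological rank-two bundles on $\PBbb^2$ are determined by $(c_1,c_2)$. Hence $h^*E$ depends only on $(c_1,c_2)$, and it remains to show that $h^*E$ is nontrivial iff $c_1c_2$ is odd.

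Step two is the actual computation, and it is the main obstacle. I would use the cofibration $S^5\xrightarrow{h}\PBbb^2\to\PBbb^3$: $h^*E$ is trivial iff $E$ extends over $\PBbb^3$. By the Schwarzenberger condition, a topological rank-two bundle on $\PBbb^3$ with Chern classes $(c_1,c_2)$ exists iff $c_1c_2\equiv c_3\pmod 2$ with $c_3=0$, i.e.\ iff $c_1c_2$ is even. So $E$ extends over $\PBbb^3$ exactly when $c_1c_2$ is even. The delicate point is ensuring that every extension problem over the top cell is captured this way: the possible extensions of $E$ across the $6$-cell correspond to choices of $c_3$, and the obstruction lies in $\pi_5(BU(2))=\pi_4(U(2))=\ZBbb/2$, matching the parity constraint. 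Alternatively, this could be checked directly on examples. For $E=\Ocal\oplus\Ocal(1)$ one has $c_2=0$, and the pullback is trivial. For $E=T\PBbb^2$ one has $c_1=3$, $c_2=3$, so $c_1c_2$ is odd, and one should show that $h^*T\PBbb^2$ is the generator of $\pi_4(S^3)$. Combined with the fact that $\delta$ is additive on the Chern-class lattice via the group structure, this suffices.

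Assembling, $\delta(\Fcal,x)\equiv c_1(\widehat{\Fcal}|_D)c_2(\widehat{\Fcal}|_D)\pmod 2$, and Theorem \ref{Theorem-2} gives the corollary.
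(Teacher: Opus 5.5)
Your proof is correct. Its first half matches the paper's: Theorem \ref{Theorem-2} reduces the claim to $\delta(\Fcal,x)$, and the link bundle is identified with $h^*E$, where $E=\widehat{\Fcal}|_D$. The paper phrases this as comparing $\Fcal$ with the homogeneous sheaf $\Fcal^c$ whose link is $E$.

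The computation of $\delta(h^*E)$ takes a genuinely different route.
\begin{itemize}
\item The paper stays inside its sheaf-theoretic framework. It gives $E$ a holomorphic structure (Schwarzenberger on $\PBbb^2$) and applies Theorem \ref{Theorem-2} a second time, to the homogeneous model. This gives $\delta(h^*E)\equiv \ell(\Ecal xt^1(\Fcal^c,\Ocal_B),0)\pmod 2$, and by Lemma \ref{Lemma-Homogeneous} this length equals $\sum_k h^1(\PBbb^2,E(k))$. It then evaluates the sum mod $2$: Serre duality pairs the summands, and when $c_1(E)=-1$ the single unpaired term satisfies $h^1(E(-1))\equiv c_2(E)$ by Riemann--Roch.
\item You argue purely homotopy-theoretically. $h^*E$ is trivial iff $E$ extends over $\PBbb^3=\PBbb^2\cup_h e^6$, and such an extension exists iff $c_1c_2$ is even.
\end{itemize}
Your route proves Corollary \ref{Corollary-3.13} independently of the paper's machinery, so combined with Theorem \ref{Theorem-2} it is also a consistency check on the parity theorem. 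The price is importing a classification result on $\PBbb^3$. The paper's route needs only the existence of holomorphic structures on $\PBbb^2$, but uses Theorem \ref{Theorem-2} twice together with Lemma \ref{Lemma-Homogeneous}.

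A few points should be tightened.
\begin{itemize}
\item Necessity of $c_1c_2\equiv 0\pmod 2$ for rank-two bundles on $\PBbb^3$ is Schwarzenberger's integrality condition with $c_3=0$. The sufficiency you need for the direction ``$c_1c_2$ even $\Rightarrow$ $h^*E$ trivial'' is a different result: the Atiyah--Rees existence theorem for rank-two bundles on $\PBbb^3$. Cite it, and combine it with the classification of rank-two bundles on $\PBbb^2$ by $(c_1,c_2)$ to conclude that $E$ itself extends.
\item Your remark that extensions across the $6$-cell ``correspond to choices of $c_3$'' is wrong, since a rank-two bundle has $c_3=0$. When extensions exist, they form a torsor under $\pi_6(BU(2))\cong\pi_5(S^3)\cong\ZBbb/2$ (the Atiyah--Rees $\alpha$-invariant). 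Your logic does not depend on this remark.
\item Discard the ``alternatively'' argument. The map $(c_1,c_2)\mapsto c_1c_2 \bmod 2$ is not additive on $\ZBbb^2$, and rank-two bundles carry no group structure that would make $\delta$ additive. So checking $\Ocal\oplus\Ocal(1)$ and $T\PBbb^2$ does not determine $\delta$ for all $(c_1,c_2)$.
\end{itemize}
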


Combining Corollary \ref{Computation} with the main results of \cite{ChenSun:18, ChenSun:19}, we obtain the following consequence. The point is that the parity constraint is visible in the tangent-cone decomposition: it is carried by the cone connection when the link is stable, and by the bubbling cycle when it is not.

\begin{cor}\label{OddTangentCone}
Suppose $\Fcal$ is a rank-two reflexive sheaf on $B$ with a homogeneous isolated point singularity at $x$, and suppose that
$$
\ell(\Ecal xt^1(\Fcal,\Ocal_B),x)\equiv 1\pmod{2}.
$$
Let $\underline{\Fcal}$ be the link of $\Fcal$ at $x$. Then, for every admissible Hermitian--Yang--Mills metric $H$ on $\Fcal$, the tangent cone has odd total energy. More precisely, if $\underline{\Fcal}$ is stable, the tangent cone is the smooth cone connection induced by $\underline{\Fcal}$, and $c_1(\underline{\Fcal})c_2(\underline{\Fcal})\equiv 1\mod 2$. If $\underline{\Fcal}$ is not stable, the tangent cone consists of a flat cone connection together with a bubbling cycle of odd total multiplicity.
\end{cor}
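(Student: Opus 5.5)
The plan is to combine Theorem \ref{Theorem-2} with Corollary \ref{Computation}, applied at the single point $x$, and then feed in the Chen--Sun description of tangent cones. Since $\Fcal$ is reflexive, $\Fcal^{**}/\Fcal=0$, so Definition \ref{defi-2.1} gives $c_3(\Fcal,x)=\ell(\Ecal xt^1(\Fcal,\Ocal_B),x)$. The hypothesis therefore says $c_3(\Fcal,x)$ is odd. By Theorem \ref{Theorem-2} (localized at $x$ by restricting to a small ball around $x$), we get $\delta(\Fcal,x)=1$. So the rank-two bundle $\Fcal|_{S^5_x}$ is topologically nontrivial.

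Homogeneity means that, in suitable coordinates, $\Fcal$ is the pullback $\pi^*\underline{\Fcal}$ (pushed forward across the origin) of a reflexive sheaf on $\PBbb^2$. Near $x$ it therefore admits a natural locally free extension to the blow-up, with $\widehat{\Fcal}|_D\cong\underline{\Fcal}$ whenever the link is locally free. Corollary \ref{Computation} then gives $c_1(\underline{\Fcal})c_2(\underline{\Fcal})\equiv c_3(\Fcal,x)\equiv 1 \pmod 2$. If the link is merely reflexive, I would instead work directly with $\delta$. The restriction of $\Fcal$ to $S^5_x$ is the pullback of $\underline{\Fcal}$ along the Hopf map away from its singularities. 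In this reading, "odd total energy" should be interpreted as odd $\delta$-type charge of the limiting cone.

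Next, invoke \cite{ChenSun:18, ChenSun:19}. For any admissible HYM metric $H$, the tangent cone at $x$ is determined algebraically by the link. If $\underline{\Fcal}$ is stable, it is the cone over the HYM connection on $\underline{\Fcal}$ pulled back to $S^5$. If $\underline{\Fcal}$ is not stable, the link is destabilized and its graded object is a sum of line bundles (cone of a flat connection after twisting), together with a bubbling set on $\PBbb^2$ whose multiplicities are the lengths $\ell$ of the quotients $\mathrm{Gr}^{**}/\mathrm{Gr}$. In the stable case, the parity statement is exactly the identity from the previous paragraph.

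The unstable case is the main obstacle: we must show the total bubbling multiplicity is odd. The plan is to compute the topological class of $\Fcal|_{S^5_x}$ from the Harder--Narasimhan/Seshadri data $0\to L_1\to\underline{\Fcal}\to L_2\otimes I_Z\to 0$. The Chern classes of the link satisfy $c_2(\underline{\Fcal})=c_1(L_1)c_1(L_2)+\ell(Z)$. In the flat-cone situation, $c_1$ is normalized so that $c_1(L_1)=c_1(L_2)$ (mod the hyperplane class twist, which is topologically trivial on $S^5$). A direct parity count then yields $c_1c_2\equiv \ell(Z)$ modulo $2$ after accounting for the twist; I expect this to be delicate and to need care with the normalization $\det$ on $S^5$. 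The fact that $\delta(\Fcal,x)=1$, equivalently $c_1c_2$ odd, then forces $\ell(Z)$, the total multiplicity of the bubbling cycle, to be odd. The cone part is flat and thus carries no charge, so the total energy parity is carried by the bubbling cycle.
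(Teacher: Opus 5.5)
Your overall route is the paper's. Reflexivity gives $c_3(\Fcal,x)=\ell(\Ecal xt^1(\Fcal,\Ocal_B),x)$, which is odd. Corollary~\ref{Computation}, applied to the blow-up extension with $\widehat{\Fcal}|_D\cong\underline{\Fcal}$, gives $c_1(\underline{\Fcal})c_2(\underline{\Fcal})\equiv 1\pmod 2$. Chen--Sun then identifies the tangent cone with the HNS data of the link, which settles the stable case.

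Your ``merely reflexive'' branch is vacuous. Reflexive sheaves on the smooth surface $\PBbb^2$ are locally free, and a singular point of the link would in any case make $\Fcal$ singular along a whole line. So there is no need to reinterpret ``odd total energy'' via $\delta$.

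\textbf{The gap is in the unstable case.} This is the step the paper only asserts (``the same odd parity is carried by the bubbling cycle''), and your attempt to make it explicit goes wrong at the normalization $c_1(L_1)=c_1(L_2)$.

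\emph{The normalization cannot be achieved.} Twisting by $\Ocal(k)$ shifts both degrees by $k$ and preserves their difference.

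\emph{Flatness of the cone part does not need it.} Each $\rho^*\Ocal(k)$ is holomorphically trivial on $\CBbb^3\setminus\{0\}$ and carries a flat cone metric, whatever the degrees.

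\emph{It would contradict your hypothesis.} If it held, $c_1(\underline{\Fcal})=2c_1(L_1)$ would be even, so $c_1c_2$ would be even, contradicting what you just proved. Correspondingly, the congruence $c_1c_2\equiv\ell(Z)\pmod 2$ you aim for is false in general: it fails whenever $c_1$ is even and $\ell(Z)$ is odd.

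\textbf{The correct count.} Write $c_1(L_1)=a$ and $c_1(L_2)=b$ in units of the hyperplane class. Then $c_2(\underline{\Fcal})=ab+\ell(Z)$, and
\[
c_1(\underline{\Fcal})\,c_2(\underline{\Fcal})=(a+b)ab+(a+b)\ell(Z)\equiv (a+b)\,\ell(Z)\pmod 2,
\]
because $(a+b)ab$ is always even. This congruence is already invariant under $\Ocal(k)$-twists, so no ``accounting for the twist'' is needed.

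Hence $c_1c_2$ odd forces $a+b$ to be odd. In particular $a\neq b$, so the strictly semistable case cannot occur and the link is genuinely unstable. It also forces $\ell(Z)$ to be odd.

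Finally, $\mathrm{Gr}=L_1\oplus(L_2\otimes I_Z)$ satisfies $\mathrm{Gr}^{**}/\mathrm{Gr}\cong L_2\otimes\Ocal_Z$, which has length $\ell(Z)$. This is the bubbling multiplicity, so it is odd. With this corrected count your argument closes, and it makes precise the assertion the paper leaves unproved.
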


\begin{proof}
Since $\Fcal$ is reflexive,
$$
c_3(\Fcal,x)=\ell(\Ecal xt^1(\Fcal,\Ocal_B),x)\equiv 1\pmod{2}.
$$
Applying Corollary \ref{Computation} to the blow-up extension whose restriction to the exceptional divisor is $\underline{\Fcal}$ gives
$$
c_1(\underline{\Fcal})c_2(\underline{\Fcal})\equiv 1\pmod{2}.
$$
By \cite{ChenSun:18, ChenSun:19}, the analytic tangent cone is determined by the Harder--Narasimhan--Seshadri graded sheaf of $\underline{\Fcal}$, and its total energy is the sum of the connection contribution and the bubbling-cycle contribution. If $\underline{\Fcal}$ is stable, the graded sheaf is $\underline{\Fcal}$ itself, so no bubbling occurs and the odd parity is carried by the smooth cone connection. If $\underline{\Fcal}$ is not stable, the graded double dual is a sum of line bundles, hence the connection part is flat, and the same odd parity is carried by the bubbling cycle. Thus the total energy of the tangent cone is odd.
\end{proof}

\subsection{Proof of Theorem \ref{Theorem-2}}
We begin with an elementary observation.
\begin{lem}\label{TopologicalInvairantsOfSimplesSingularities}
Suppose $\Fcal$ has a simple singularity at $x$. Then
$$
\delta(\Fcal,x)\equiv 1\pmod{2}.
$$
\end{lem}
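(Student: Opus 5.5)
By the preceding lemma, after a local holomorphic change of coordinates centered at $x$ we may assume that near $x$ the sheaf $\Fcal$ is the cokernel of
$$
\Ocal \xrightarrow{(z_1,z_2,z_3)^t} \Ocal^3 .
$$
The invariant $\delta(\Fcal,x)$ depends only on the topological type of the rank-two bundle $\Fcal|_{S_x^5}$ on a small sphere. We may shrink the sphere and deform it through spheres avoiding $x$, since the coordinate change is isotopic to its linear part. It therefore suffices to compute the class of the bundle
$$
V=\CBbb^3\big/\CBbb\cdot z \quad\text{over } S^5=\{|z|=1\}\subset\CBbb^3 .
$$
Equivalently, $V$ is the orthogonal complement $z^\perp$ of the tautological line bundle $L$ spanned by $z$. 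So the plan is to show that the rank-two bundle $z^\perp$ on $S^5$ is topologically nontrivial.

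Rank-two complex bundles on $S^5$ are classified by $\pi_4(U(2))$. Since $U(2)\cong SU(2)\times S^1$ as spaces, this is $\pi_4(S^3)=\ZBbb/2\ZBbb$, so the nontrivial class is detected exactly by $\delta$.

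\textbf{Main step.} Note that $L\oplus z^\perp=\underline{\CBbb}^3$ and that $L$ is trivialized by the section $z$. Hence $z^\perp\oplus\underline{\CBbb}$ is trivial, so $z^\perp$ is stably trivial. Its clutching map $S^4\to U(2)$ therefore lies in the kernel of $\pi_4(U(2))\to\pi_4(U(3))=0$, and no contradiction arises from this. We need to identify the clutching map directly.

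\textbf{Identifying the clutching map.} The unit frame bundle gives a fibration
$$
U(2)\to U(3)\to S^5,
$$
with fiber over $z$ the unitary frames of $z^\perp$. Thus $z^\perp$ is precisely the bundle associated to this principal $U(2)$-bundle. Its classifying element in $\pi_4(U(2))$ is the boundary map
$$
\partial:\pi_5(S^5)\to\pi_4(U(2))
$$
applied to the generator. From the long exact sequence
$$
\pi_5(U(3))\to\pi_5(S^5)\xrightarrow{\partial}\pi_4(U(2))\to\pi_4(U(3))=0,
$$
$\partial$ is surjective. Since $\pi_4(U(2))=\ZBbb/2\ZBbb\neq 0$, the image of the generator is the nontrivial class. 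This is the main obstacle: one must justify that the associated bundle class is exactly $\partial(\mathrm{gen})$, which is standard clutching theory for principal bundles over spheres.

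Hence $z^\perp$ is nontrivial and $\delta(\Fcal,x)=1$. Only the homotopy-theoretic identification needs care; the remaining reductions are routine.
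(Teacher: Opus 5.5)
Your proof is correct, but it follows a different route from the paper. Both arguments reduce to showing that the bundle $z^\perp\cong\CBbb^3/\CBbb z$ on $S^5$ is nontrivial; the paper describes this bundle as the pullback of $T\PBbb^2$. The paper then uses the Hopf vector field $iz$ to split off a trivial real line, $TS^5\cong z^\perp\oplus\underline{\mathbb R}$. So if $z^\perp$ were trivial, $S^5$ would be parallelizable, contradicting the classical theorem that only $S^1,S^3,S^7$ are.

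You instead recognize $U(3)\to S^5$ as the unitary frame bundle of $z^\perp$. You then combine $\pi_4(U(3))=0$ (stable range, or Bott periodicity) with $\pi_4(U(2))\cong\pi_4(S^3)\cong\ZBbb/2\ZBbb$.

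The two routes buy different things. The paper's is shorter and shows more: $z^\perp$ is nontrivial even as a real rank-four bundle. Its price is quoting the non-parallelizability of $S^5$. Yours is self-contained homotopy theory. As a byproduct it makes explicit that rank-two complex bundles on $S^5$ form $\ZBbb/2\ZBbb$, which the paper uses only implicitly later (e.g.\ in Corollary \ref{TrivialityOfTwo}).

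The step you flag as the main obstacle, identifying the clutching class with $\partial(\mathrm{gen})$, is standard, but you can avoid it entirely. If $z^\perp$ were trivial, then $U(3)\to S^5$ would admit a section, which forces $\partial=0$. That gives an injection $\pi_4(U(2))\hookrightarrow\pi_4(U(3))=0$, which is impossible. Your ``Main step'' paragraph on stable triviality is a harmless aside and can be dropped.
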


\begin{proof}
By the previous lemma, after choosing local coordinates, we may assume that the underlying smooth bundle of $\Fcal$ restricted to a sufficiently small sphere centered at $x$ is the pullback of the tangent bundle of $\PBbb^2$. If this bundle were topologically trivial, then the tangent bundle of $S^5$ would also be trivial. But it is well known that among spheres, only $S^1$, $S^3$, and $S^7$ have trivial tangent bundle. This is a contradiction. The claim follows.
\end{proof}

We will use the following additivity property of the topological $\ZBbb/2\ZBbb$-invariant.

\begin{prop}\label{AdditivePropertyOfTopologicalInvariants}
Suppose $\Fcal$ is a rank-two torsion-free sheaf on $B$ with finitely many isolated point singularities $x_i$. Then
$$
\delta(\Fcal)=\sum_i\delta(\Fcal,x_i),
$$
where the sum is taken in $\ZBbb/2\ZBbb$.
\end{prop}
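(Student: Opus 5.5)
The plan is to identify $\delta$ with a primary obstruction class valued in $\pi_4(U(2))\cong\ZBbb/2\ZBbb$ and then use additivity of that obstruction over the punctures. First, the topological setup. Away from the singular points, $\Fcal$ is a holomorphic rank-two bundle, so it restricts to a smooth complex rank-two bundle $V$ on $W=B(1-\epsilon)\setminus\bigcup_i B_{x_i}(\epsilon)$. Rank-two complex bundles on $S^5$ are classified by $\pi_4(U(2))\cong\pi_4(SU(2))=\pi_4(S^3)\cong\ZBbb/2\ZBbb$. Hence $\delta(\Fcal,x_i)$ and $\delta(\Fcal)$ are exactly the clutching classes of $V$ on the inner spheres $S_{x_i}^5$ and on the outer sphere $\partial B(1-\epsilon)$, respectively. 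Note that $W$ is homotopy equivalent to a wedge of copies of $S^5$, one for each puncture, together with lower-dimensional data that is trivial here: $W\simeq \bigvee_i S^5$.

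Next, build a comparison bundle. Since $H^2(W)=H^4(W)=0$ and $\pi_k(BU(2))$ vanishes for $k=1,3$ and is $\ZBbb$ for $k=2,4$, the bundle $V$ is trivial over the 4-skeleton. Concretely, $V$ is determined up to isomorphism by the classes $a_i=[V|_{S^5_{x_i}}]\in\pi_5(BU(2))=\pi_4(U(2))$. I will argue as follows. Take $W$ as $\bar B$ minus small balls and view it as a cobordism between $\bigsqcup_i S^5_{x_i}$ and the outer sphere. Trivialize $V$ over the complement of small disks, one disk on each boundary sphere, joined by arcs. Since $W$ deformation retracts onto the outer sphere with $k$ five-cells removed, the pinch map $S^5_{\mathrm{out}}\to\bigvee_i S^5$ has degree $1$ on each wedge summand. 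Naturality of clutching classes under this pinch map, using the group structure on $\pi_5(BU(2))$ (abelian since $5\geq 2$), gives $[V|_{\partial B}]=\sum_i a_i$.

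The main obstacle is making this pinch/cobordism argument precise, in particular showing that the orientation signs play no role. Signs are irrelevant because the target is $\ZBbb/2\ZBbb$. What must be checked is that a vector bundle over a wedge of spheres is classified additively. I will cite the standard fact $[\bigvee S^5,BU(2)]=\prod\pi_5(BU(2))$, or equivalently argue by induction on the number of points: separate one singular point from the rest by a hypersurface sphere, and use that bundles glued along a sphere add in $\pi_4$ (connected-sum additivity of clutching functions). Finally, the sum of clutching classes equals the sum of the $\delta(\Fcal,x_i)$, which yields the stated identity in $\ZBbb/2\ZBbb$.
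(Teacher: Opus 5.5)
Your argument is correct, but it takes a genuinely different route from the paper. You prove additivity directly by homotopy theory. You identify $\delta$ with the clutching class in $\pi_5(BU(2))\cong\pi_4(U(2))\cong\ZBbb/2\ZBbb$. You classify the bundle on the punctured ball $W\simeq\bigvee_i S^5$ by a map $g$, so that $g_*$ is a homomorphism on $\pi_5$. You then observe that the outer sphere is the sum of the inner spheres.

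The paper instead works as follows:
\begin{itemize}
\item It fills in the topologically trivial singularities with trivial bundles.
\item It inducts on the number of nontrivial singularities, removing them two at a time via Corollary \ref{TrivialityOfTwo}.
\item In that corollary, the classification over $S^5\vee S^5$ is used only to say that the boundary class is \emph{some} function of the two local classes.
\item The one value that matters (nontrivial, nontrivial $\mapsto$ trivial) is obtained from the explicit holomorphic example of Lemma \ref{TwoSimpleSingularity}. This uses Lemma \ref{TopologicalInvairantsOfSimplesSingularities} and the smoothing family imported from the author's earlier work.
\end{itemize}

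What each buys: your approach is self-contained and avoids the external algebro-geometric input. It also yields the stronger statement that the boundary class is the signed sum of the local classes in $\pi_5(BU(2))$. The paper's approach never has to check that the boundary class depends homomorphically on the local data, because it realizes the single relevant case concretely.

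One justification in your write-up is wrong and should be replaced. $W$ does not deformation retract onto ``the outer sphere with $k$ five-cells removed''; that space is homotopy equivalent to $\bigvee_{k-1}S^4$. The correct justification of your pinch-map claim is homological:
\begin{itemize}
\item Since $\partial W$ consists of the outer sphere and the inner spheres, one has $[S^5_{\mathrm{out}}]=\sum_i \pm[S^5_{x_i}]$ in $H_5(W)$.
\item Since $W$ is $4$-connected, Hurewicz gives $\pi_5(W)\cong H_5(W)\cong\ZBbb^k$. Hence the inclusion of the outer sphere is homotopic to the sum of the inclusions of the inner spheres.
\item Applying $g_*$ gives $[V|_{S^5_{\mathrm{out}}}]=\sum_i \pm a_i$, and the signs are irrelevant in $\ZBbb/2\ZBbb$.
\item Both $W$ and $BU(2)$ are simply connected, so free and based homotopy classes agree and there are no basepoint issues.
\end{itemize}
With this fix, your proof is complete.
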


Before the proof, we record a two-point cancellation lemma.

\begin{lem}\label{TwoSimpleSingularity}
There exists a rank-two reflexive sheaf $\Fcal$ on $B$ with two simple singularities such that $\delta(\Fcal)=0$.
\end{lem}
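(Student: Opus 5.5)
We will take the explicit Koszul-type model generalizing the simple singularity of the lemma above. Fix $0<a<1/2$ and set $f=(f_1,f_2,f_3)=(z_1,\,z_2,\,z_3^2-a^2)$. Define $\Fcal$ by
$$
0\to \Ocal_B\xrightarrow{\left(\begin{smallmatrix}f_1\\ f_2\\ f_3\end{smallmatrix}\right)}\Ocal_B^3\to \Fcal\to 0 .
$$
The common zero set of $f$ in $B$ is exactly the two points $x_\pm=(0,0,\pm a)$. We will show that $\Fcal$ has simple singularities at $x_\pm$ and is trivial near $\partial B$.

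\textbf{Local algebra.} Dualizing the resolution gives $\Ecal xt^1(\Fcal,\Ocal_B)\cong \Ocal_B/(f_1,f_2,f_3)$ and $\Ecal xt^i(\Fcal,\Ocal_B)=0$ for $i\geq 2$. Since $\partial f_3/\partial z_3=\pm 2a\neq 0$ at $x_\pm$, the ideal $(f_1,f_2,f_3)$ equals $\mathfrak m_{x_\pm}$ locally. So $\ell(\Ecal xt^1(\Fcal,\Ocal_B),x_\pm)=1$.

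$\Fcal$ is locally free of rank two off $\{x_+,x_-\}$. It is torsion-free because it is the cokernel of an injective map whose cokernel has homological dimension $\le 1$ and which is locally free in codimension $\le 2$. It is reflexive because $\Ecal xt^i(\Fcal,\Ocal_B)=0$ for $i\ge2$ and its singular set has codimension three; equivalently, the depth of $\Fcal$ is at least $2$ everywhere. Hence both points are simple singularities in the sense of the definition above.

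\textbf{Topology on the boundary.} On $\partial B\cong S^5$ the map $f$ does not vanish. Hence $\Fcal|_{S^5}\cong f^*\Qcal$, where $\Qcal=\underline{\CBbb}^3/\mathrm{line}$ is the tautological quotient bundle over $\CBbb^3\setminus\{0\}\simeq S^5$. Thus $\Fcal|_{S^5}$ is the pullback of $\Qcal|_{S^5}$ under the map $g=f/|f|:S^5\to S^5$. The degree of $g$ is the sum of the local degrees of $f$ at its zeros. Since $f$ is holomorphic with nondegenerate zeros, each local degree is $+1$, so $\deg g=2$.

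\textbf{Conclusion.} Rank-two complex bundles on $S^5$ are classified by their clutching class in $\pi_4(U(2))\cong\pi_4(S^3)\cong\ZBbb/2$. Up to homotopy, a degree-$d$ self-map of $S^5$ is the suspension of a degree-$d$ map of $S^4$. Pulling back a bundle along it multiplies the clutching class by $d$ in this group. With $d=2$ the class vanishes in $\ZBbb/2$, so $\Fcal|_{\partial B}$ is trivial and $\delta(\Fcal)=0$. (Lemma \ref{TopologicalInvairantsOfSimplesSingularities} is consistent with this: $\Qcal|_{S^5}$ itself corresponds to the nontrivial class.)

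The main point needing care is this last step: justifying that pullback by a degree-$d$ map acts as multiplication by $d$ on clutching classes. Concretely, one writes the degree-$2$ map as the composition $S^5\to S^5\vee S^5\to S^5$ of the pinch map and the fold map, with each wedge summand mapped by degree one. This shows the pulled-back clutching function is the product of two copies of the original class in $\pi_4(U(2))$, hence zero.
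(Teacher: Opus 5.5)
Your proof is correct. It uses essentially the same sheaf as the paper: the paper's fiber $\Ecal_t$, presented by $(z_1,z_2,z_3(z_3-t))$, becomes your $(z_1,z_2,z_3^2-a^2)$ after translating $z_3\mapsto z_3+t/2$. The difference lies in how you show $\delta(\Fcal)=0$.

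The paper argues by deformation. The family is locally free near $\partial B$, so $\delta(\Ecal_t)=\delta(\Ccal)$, where the central fiber $\Ccal$ is presented by $(z_1,z_2,z_3^2)$. It then invokes the construction in \cite{Chen26a} of a family $\Gcal$ with locally free fibers for $t\neq 0$ and $(\Gcal|_{B_0})^{**}\cong\Ccal$. Hence the boundary bundle of $\Ccal$ extends over the ball as an honest bundle and is trivial.

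You instead compute directly. You identify the boundary bundle with $g^*$ of the tautological quotient bundle ($\Qcal$ in your notation) restricted to $S^5$, where $\deg g=2$. Precomposition with a degree-two self-map of $S^5$ acts as multiplication by $2$ on $\pi_5(BU(2))\cong\pi_4(U(2))\cong\ZBbb/2$. This is left distributivity of composition over the pinch-map sum, which always holds. So the class vanishes whatever the class of the quotient bundle is.

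What your route buys:
\begin{itemize}
\item It is self-contained and avoids the external smoothing result from \cite{Chen26a}.
\item It generalizes at once: for a sheaf presented by $f=(f_1,f_2,f_3)$ with isolated common zeros, $\delta(\Fcal)\equiv\deg(f/|f|)\pmod 2$.
\item The same pinch-map reasoning on $B\setminus\{x_i\}\simeq\bigvee S^5$ gives the additivity of Proposition \ref{AdditivePropertyOfTopologicalInvariants} directly, which would make the two-point cancellation lemma unnecessary.
\end{itemize}

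What the paper's route buys: it needs no homotopy-group computation at this step, although the $\ZBbb/2$ classification is used anyway in Corollary \ref{TrivialityOfTwo}. It also ties the cancellation to the algebraic smoothing and bubbling picture that motivates the paper.
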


\begin{proof}
On $B\times \Delta$, consider
$$
0\to \Ocal \xrightarrow{\begin{pmatrix}z_1\\ z_2\\ z_3(z_3-t)\end{pmatrix}}\Ocal^3\to \Ecal\to 0.
$$
For $t\neq 0$, the fiber $\Ecal_t$ has exactly two simple singularities. Let $\Ccal=\Ecal|_{B_0}$. Since the family is locally free near $\partial B$, $\delta(\Ecal_t)=\delta(\Ccal)$. By \cite{Chen26a}, there is a family $\Gcal$ such that $\Gcal_t$ is locally free for $t\neq 0$ and $(\Gcal|_{B_0})^{**}\cong \Ccal$. Hence $\delta(\Ccal)=0$, so $\delta(\Ecal_t)=0$ for $t\neq 0$.
\end{proof}

\begin{cor}\label{TrivialityOfTwo}
Suppose $F$ is a complex rank-two bundle on $B\setminus \{x_1,x_2\}$ which is topologically nontrivial near both $x_1$ and $x_2$. Then $F$ is topologically trivial near $\partial B$.
\end{cor}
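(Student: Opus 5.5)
Since $F$ is only a topological bundle, the plan is to reduce to the model of Lemma \ref{TwoSimpleSingularity} using the classification of rank-two bundles on $S^5$. The relevant fact is $\pi_4(U(2))\cong\pi_4(SU(2))=\pi_4(S^3)\cong\ZBbb/2\ZBbb$. Complex rank-two bundles on $S^5$ are classified by the clutching map in $\pi_4(U(2))$. Hence there are exactly two isomorphism classes of such bundles: the trivial one and a unique nontrivial one.

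First, fix small disjoint balls $B_1,B_2$ around $x_1,x_2$, and set $W=B\setminus(B_1\cup B_2)$. The region $W$ is a compact manifold with boundary $\partial B\sqcup \partial B_1\sqcup \partial B_2$. It deformation retracts onto $\partial B_1\vee\partial B_2$ (a wedge of two $S^5$'s) together with a connecting arc, and it is homotopy equivalent to $S^5\vee S^5$. Because of this, a rank-two bundle on $W$ is determined up to isomorphism by its restrictions to $\partial B_1$ and $\partial B_2$. In clutching terms, bundles on $S^5\vee S^5$ correspond to pairs $(a_1,a_2)\in\pi_4(U(2))^2$. The base-point issue is harmless here, since $U(2)$ is connected and $\pi_1$ acts trivially on $\pi_4$ for a topological group. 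The hypothesis says that $F|_{W}$ corresponds to $(1,1)$, meaning the nontrivial class near each point.

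Next, take the sheaf $\Fcal$ from Lemma \ref{TwoSimpleSingularity}. Its two simple singularities sit at $x_1',x_2'$. After a diffeomorphism of $B$ that is the identity near $\partial B$ and moves $x_i'$ to $x_i$, it gives a bundle $F'$ on $B\setminus\{x_1,x_2\}$. By Lemma \ref{TopologicalInvairantsOfSimplesSingularities}, $F'$ is nontrivial near each $x_i$, so $F'|_W$ also corresponds to $(1,1)$. Therefore $F|_W\cong F'|_W$, and in particular $F|_{\partial B}\cong F'|_{\partial B}$. By Lemma \ref{TwoSimpleSingularity}, the latter is trivial, which proves the corollary.

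The main obstacle is the claim that $W\simeq S^5\vee S^5$ and that bundles on it are classified exactly by the two boundary classes. The homotopy equivalence holds because $B$ minus two points is homotopy equivalent to $S^5\vee S^5$. It remains to check that the inclusion $\partial B_1\sqcup\partial B_2\to W$ induces this wedge decomposition; this is a standard cell-structure argument. Alternatively, one can avoid the wedge: view $\partial B$ as bordant to $\partial B_1\sqcup\partial B_2$ through $W$, and observe that the clutching class in $\pi_4(U(2))$ is additive under connected sum of spheres. Then $[F|_{\partial B}]=1+1=0$. With either argument, the realization via Lemma \ref{TwoSimpleSingularity} confirms that the nontrivial class has order two, consistent with $\ZBbb/2\ZBbb$.
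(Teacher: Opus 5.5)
Your proposal is correct and follows the paper's argument. Since $B\setminus\{x_1,x_2\}\simeq S^5\vee S^5$, the bundle is determined by its two local classes in $\pi_4(U(2))\cong\ZBbb/2\ZBbb$, and Lemma \ref{TwoSimpleSingularity} realizes the class $(1,1)$ with trivial boundary. Your additions (the computation of $\pi_4(U(2))$ and the based-versus-free homotopy remark) fill in what the paper leaves implicit. Your alternative additivity argument also works: by Hurewicz, $[\partial B]=[\partial B_1]+[\partial B_2]$ in $\pi_5(W)\cong\ZBbb^2$, so the boundary class is $1+1=0$, and this makes the realization lemma unnecessary.
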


\begin{proof}
The space $B\setminus \{x_1,x_2\}$ is homotopy equivalent to $S^5\vee S^5$. Hence the topological type of $F$ is determined by its two local classes. Since both classes are the nonzero element of $\ZBbb/2\ZBbb$, Lemma \ref{TwoSimpleSingularity} realizes the same local data with trivial boundary class. Therefore $F$ is trivial near $\partial B$.
\end{proof}

\begin{proof}[Proof of Proposition \ref{AdditivePropertyOfTopologicalInvariants}]
Let $F$ be the underlying topological vector bundle of $\Fcal$ away from its singularities. Fill in every topologically trivial singularity by a trivial bundle. It remains to count the topologically nontrivial singularities. We use strong induction on their number $n$. The cases $n=0,1$ are clear. If $n\geq 2$, choose two nontrivial singularities and a ball-like connected neighborhood $U$ containing them and no other singularities. By Corollary \ref{TrivialityOfTwo}, $F$ is trivial near $\partial U$, so we may glue in a trivial bundle over $U$. This removes two nontrivial singularities, and leaves the boundary class unchanged. The result follows by induction.
\end{proof}

We reduce the parity statement to simple singularities by the following perturbation result.

\begin{prop}\label{Perturbation}
Suppose $\Fcal$ is a rank-two reflexive sheaf on $B$ with finitely many isolated point singularities. Then there exists a flat family $\Ecal$ on $B\times \Delta$ such that $\Ccal\cong \Fcal$ and, for every $t\in \Delta\setminus\{0\}$, the sheaf $\Ecal_t$ has only simple singularities. Moreover, after shrinking $\Delta$, the family is locally free near $\partial B$.
\end{prop}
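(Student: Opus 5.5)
The plan is to perturb a local presentation of $\Fcal$ by generic small matrices. This uses the structure of rank-two reflexive sheaves on a smooth threefold: locally they have projective dimension at most one. After shrinking $B$ slightly (or working on a Stein neighborhood of $\overline{B(1/2)}$ and then restricting), we choose a global resolution
$$
0\to \Ocal_B^r\xrightarrow{\Phi}\Ocal_B^{r+2}\to \Fcal\to 0 .
$$
Such a resolution exists on a Stein ball because $\Fcal$ is generated by finitely many global sections. The kernel of the resulting surjection is reflexive with $\Ecal xt^i=0$ for $i\ge 1$, hence locally free, and on a ball it is trivial by Oka--Grauert. The singular set of $\Fcal$ is the degeneracy locus $D(\Phi)=\{\rank\Phi<r\}$, which has expected codimension $(r+2)-r+1=3$. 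It is a finite set contained in $B(1/2)$, and $\Phi$ has full rank $r$ near $\partial B$.

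We then define the family by
$$
\Phi_t=\Phi+t\Psi, \qquad 0\to \Ocal_X^r\xrightarrow{\Phi_t}\Ocal_X^{r+2}\to \Ecal\to 0,
$$
where $\Psi$ is a \emph{constant} (or linear) $(r+2)\times r$ matrix chosen generically. The map $\Phi_t$ is injective on $X$ because $\Phi_0$ is injective. The cokernel $\Ecal$ is flat over $\Delta$, since $\Phi_t|_{B_t}$ is injective for each $t$: its degeneracy locus is finite, so it is generically injective. Each fiber $\Ecal_t$ has a two-term locally free resolution and is locally free off the finite set $D(\Phi_t)$. It is therefore a rank-two sheaf of projective dimension at most one that is free in codimension two, which makes it reflexive (in particular torsion-free) with isolated singularities. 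The central fiber is $\Fcal$. Near $\partial B$ the matrix $\Phi$ has full rank on a compact collar, so for $|t|$ small $\Phi_t$ still has full rank there. Hence $\Ecal$ is locally free near $\partial B$ after shrinking $\Delta$.

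The main step is to show that for generic $\Psi$ and small $t\neq 0$, every point of $D(\Phi_t)$ is simple. We use a transversality argument. Let $M\subset \mathrm{Hom}(\CBbb^r,\CBbb^{r+2})$ be the determinantal variety of matrices of rank $<r$. It has codimension $3$, its smooth part $M^\circ$ is the corank-one locus, and the rest has codimension $\ge 8$. Consider the map
$$
G:B\times \mathrm{Hom}(\CBbb^r,\CBbb^{r+2})\to \mathrm{Hom}(\CBbb^r,\CBbb^{r+2}),\qquad (z,\Psi)\mapsto \Phi(z)+\Psi .
$$
This map is a submersion. By parametric transversality (Sard), for $\Psi$ outside a measure-zero set, $z\mapsto \Phi(z)+\Psi$ misses the strata of codimension $\ge 8$ and meets $M^\circ$ transversally. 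Applying this to $\Phi+t\Psi=t(\Phi/t+\Psi)$ for each fixed small $t\neq0$ needs care, since the bad set depends on $t$. I would instead apply the transversality to $(z,t)\mapsto \Phi(z)+t\Psi$ on $B\times(\Delta\setminus 0)$, using a parameter space of $\Psi$ that includes the constant matrices. This gives transversality for generic $t$. The set of $t$ where transversality fails is analytic in $\Delta\setminus\{0\}$, and it is a proper subset. Shrinking $\Delta$ and reparametrizing $t$ then removes the finitely many bad values near $0$. If that analyticity argument is awkward, I would allow $\Psi$ to vary with a second parameter and restrict to a generic line through $0$.

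It remains to check that transversality forces simplicity. At a point $x$ where $\Phi_t(x)$ has corank exactly one and the intersection with $M^\circ$ is transversal, we split off the invertible $(r-1)\times(r-1)$ block by row and column operations over $\Ocal_x$. This reduces the presentation to $\Ocal\xrightarrow{(f_1,f_2,f_3)^t}\Ocal^3$. In these coordinates, transversality to $M^\circ$ means exactly that $df_1,df_2,df_3$ are independent at $x$. Hence $\Ecal xt^1(\Ecal_t,\Ocal)_x\cong \Ocal/(f_1,f_2,f_3)$ has length one, so $x$ is a simple singularity.

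I expect the main obstacle to be this uniform-in-$t$ genericity together with compactness control near $\partial B$. Everything is done on a slightly smaller closed ball, so that only finitely many singular points appear and none escape to the boundary.
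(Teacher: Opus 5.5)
Your overall route is the paper's. You take a presentation $0\to\Ocal^r\xrightarrow{\Phi}\Ocal^{r+2}\to\Fcal\to0$ and perturb it by a constant matrix $\Phi+t\Psi$. You then use parametric transversality for the submersion $(z,\xi)\mapsto\Phi(z)+\xi$, and the reduction ``transverse to the corank-one stratum $\Rightarrow$ simple'' by splitting off the invertible $(r-1)\times(r-1)$ block. Reflexivity comes from projective dimension one plus freeness in codimension two, and the compact collar controls the boundary. The step that does not work as written is the one you flag yourself: uniformity in $t$.

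\emph{The gap.} Knowing that the set of bad $t$ is analytic in $\Delta\setminus\{0\}$ only makes it discrete in the punctured disc. Such a set can accumulate at $t=0$; for example, $\{1/n\}$ is the zero set of $\sin(\pi/t)$. Then no shrinking of $\Delta$ and no reparametrization removes it. Your fallback, restricting to a generic line through $0$ in the $\xi$-space, has the same hole if the bad set $Z=\{\xi:\ \Phi+\xi \text{ is not transverse to } \Sigma\}$ is only known to be null by Sard. A closed null set such as $\{0\}\cup\bigcup_n\{|\xi|=1/n\}$ meets every line through $0$ arbitrarily close to $0$.

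\emph{The missing ingredient.} You need analyticity of the bad locus in a full neighbourhood of $\xi=0$ (or $t=0$), and this comes from properness.

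\begin{enumerate}
\item Let $Y=G^{-1}(\Sigma)$, with $G(z,\xi)=\Phi(z)+\xi$. The set $W\subset B\times M$ of pairs with $\Phi(z)+\xi\in\Sigma_{\mathrm{sing}}$, or with $\Phi(z)+\xi\in\Sigma_{\mathrm{sm}}$ and non-transverse intersection, is closed analytic. For instance, it is the support of $\Omega^1_{Y/M}$, with $Y$ cut out by the maximal minors.
\item Since $\Phi$ has full rank on a compact collar, for $\xi$ in a small neighbourhood $U$ of $0$ the set $W$ lies over a fixed compact sub-ball. Hence $W\to U$ is proper.
\item Remmert's proper mapping theorem makes $Z\cap U$ a closed analytic subset, and Sard makes it a proper one.
\item For $\Psi$ outside the tangent cone $C_0Z$, which is a proper cone, $\{t:\ t\Psi\in Z\}$ is an analytic subset of a disc that is not the whole disc. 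So $0$ is isolated in it, and $\Phi+t\Psi$ is transverse for all $0<|t|\ll1$.
\end{enumerate}

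This is exactly what the paper asserts (the bad parameters lie in a proper analytic subvariety of $M$) before choosing a curve through $0$. With it in place, your fallback becomes the paper's argument.

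Your one-parameter version can also be repaired directly. The same properness argument makes the bad $t$-set analytic in a whole disc around $0$. It is not the whole disc, because for generic $\Psi$ a fixed small $t_0\neq0$ is good. Hence it is discrete at $0$ as well.

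\emph{A minor ordering point.} For $t\neq0$, you use finiteness of $D(\Phi_t)$ to get reflexivity and isolated singularities of $\Ecal_t$. That finiteness is an output of the transversality step, not an input. Flatness alone needs only injectivity of $\Phi_t|_{B_t}$, which the collar already gives.
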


We first make a local reduction. After shrinking $B$, choose a presentation
$$
0\to \Ocal^r \xrightarrow{\Phi} \Ocal^{r+2}\to \Fcal\to 0.
$$
Let $M=\operatorname{Hom}(\CBbb^r,\CBbb^{r+2})$. We regard $\Phi$ as a holomorphic map
$$
\Phi:B\to M.
$$
Let
$$
\Sigma=\{A\in M:\rank(A)\leq r-1\}.
$$
Then $\Sigma$ is irreducible of codimension three. Let $\Ucal$ be the tautological rank-two reflexive sheaf on $M$, defined by
$$
0\to \Ocal_M^r \xrightarrow{\mathsf A} \Ocal_M^{r+2}\to \Ucal\to 0,
$$
where $\mathsf A|_A=A$ for $A\in M$.

\begin{lem}\label{TransversalImpliesSimple}
Suppose $\Phi(x)\in \Sigma_{\mathrm{sm}}$ and $\Phi$ is transverse to $\Sigma$ at $x$. Then $\Fcal$ has a simple singularity at $x$.
\end{lem}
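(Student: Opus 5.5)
Since the presentation $0\to\Ocal^r\xrightarrow{\Phi}\Ocal^{r+2}\to\Fcal\to 0$ is locally free and $\Fcal$ has rank two, $\Fcal$ is reflexive at every point, and in particular at $x$. So only the condition $\ell(\Ecal xt^1(\Fcal,\Ocal_B),x)=1$ needs proof. Dualizing the presentation gives
$$
\Ecal xt^1(\Fcal,\Ocal_B)\cong \Ccal oker\bigl(\Phi^t:\Ocal^{r+2}\to\Ocal^r\bigr).
$$
I will first reduce to a local normal form for $\Phi$ near $x$, and then compute this cokernel directly.

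Put $A_0=\Phi(x)$. Since $A_0\in\Sigma_{\mathrm{sm}}$, its rank is exactly $r-1$; the smooth locus of the determinantal variety $\Sigma$ is the stratum of rank exactly $r-1$. Choose constant invertible matrices $P\in GL_{r+2}(\CBbb)$ and $Q\in GL_r(\CBbb)$ with $PA_0Q=\begin{pmatrix}I_{r-1}&0\\0&0\end{pmatrix}$. Replacing $\Phi$ by $P\Phi Q$ changes neither $\Fcal$ nor $\Ecal xt^1$ up to isomorphism. It also preserves $\Sigma$ and transversality, since the action of $(P,Q)$ on $M$ is a linear automorphism preserving $\Sigma$.

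Near $x$, write $\Phi=\begin{pmatrix}\alpha&\beta\\ \gamma&\delta\end{pmatrix}$ with $\alpha$ of size $(r-1)\times(r-1)$ and $\alpha(x)=I$. Then $\alpha$ is invertible near $x$, and holomorphic row and column operations bring $\Phi$ to $\begin{pmatrix}I&0\\0&g\end{pmatrix}$, where $g=\delta-\gamma\alpha^{-1}\beta$ is a column vector $(g_1,g_2,g_3)^t$ vanishing at $x$. Consequently
$$
\Ecal xt^1(\Fcal,\Ocal_B)_x\cong \Ocal_x/(g_1,g_2,g_3).
$$

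Next I identify the transversality hypothesis with the nondegeneracy of $dg$. Near $A_0$, the variety $\Sigma$ is the zero set of the Schur-complement map $A\mapsto D-CA_{11}^{-1}B\in\CBbb^3$. This map is a submersion at $A_0$, because its differential there is the projection onto the lower-right $3\times 1$ block. Hence $T_{A_0}\Sigma$ is the kernel of that projection. Transversality of $\Phi$ to $\Sigma$ at $x$ means that $d\Phi_x(T_xB)+T_{A_0}\Sigma=M$, which is equivalent to the composite $T_xB\to\CBbb^3$, $v\mapsto dg_x(v)$, being surjective. Since $\beta(x)=0$ and $\gamma(x)=0$, the correction term $\gamma\alpha^{-1}\beta$ vanishes to second order at $x$, so $dg_x=d\delta_x$.

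With $dg_x$ surjective, hence an isomorphism $\CBbb^3\to\CBbb^3$, the functions $g_1,g_2,g_3$ form local coordinates at $x$. Therefore $(g_1,g_2,g_3)=\mathfrak m_x$ and $\ell(\Ecal xt^1(\Fcal,\Ocal_B),x)=1$, so $x$ is a simple singularity.

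The only delicate step is the identification of $T_{A_0}\Sigma$ and the resulting translation of transversality into surjectivity of $dg_x$. The Schur-complement chart handles this cleanly, but it must be checked against the reductions by $P$, $Q$ and the holomorphic row and column operations. The constant reduction by $(P,Q)$ preserves $\Sigma$ and transversality, as noted above. The holomorphic operations only change $\Fcal$ up to isomorphism: I never need their compatibility with $\Sigma$, because the transversality computation uses the original $\Phi$ in the constant-normalized frame, and the chart computes $g$ directly from that $\Phi$.
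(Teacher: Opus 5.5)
Your argument is essentially the paper's proof. You normalize $\Phi(x)$ to $\begin{pmatrix}\id_{r-1}&0\\0&0\end{pmatrix}$ by constant base changes. You then note that near this point $\Sigma$ is cut out by three functions whose differentials are the lower-right $3\times 1$ block. Transversality then makes $\Phi$ equivalent to $\mathrm{diag}(\id_{r-1},(z_1,z_2,z_3)^t)$. Your Schur-complement chart $A\mapsto D-CA_{11}^{-1}B$ is the precise version of the paper's remark that $\Sigma$ is ``cut out by the last three entries of the final column'', which is only true to first order. Your direct computation $\Ecal xt^1(\Fcal,\Ocal_B)_x\cong\Ocal_x/(g_1,g_2,g_3)=\Ocal_x/\mathfrak m_x$ replaces the paper's appeal to the simple model.

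One step is wrong as written: your opening claim that a rank-two sheaf with a two-term locally free resolution is reflexive at every point. For example, $\Phi=(z_1,z_2,0)^t$ gives $\Fcal\cong\Ical_Z\oplus\Ocal$ with $Z=\{z_1=z_2=0\}$, which has rank two and such a resolution but is not reflexive. The rank plays no role. What you need is that the degeneracy locus has codimension at least three near $x$.

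Your own normal form supplies this. Since $g_1,g_2,g_3$ are local coordinates, $\{g=0\}=\{x\}$ near $x$, so $\Fcal$ is locally free on a punctured neighbourhood of $x$. It also has projective dimension at most one, so Auslander--Buchsbaum gives $\operatorname{depth}\Fcal_x\geq 2$, and $\Fcal$ is reflexive at $x$. Equivalently, after cancelling $\id_{r-1}$ you are looking at the simple model itself. Move the reflexivity claim to the end and justify it this way. This matters because the lemma is later applied to $\Ecal_t=\Phi_t^*\Ucal$, where reflexivity of the fiber is not assumed in advance.
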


\begin{proof}
After translating $x$ to $0$ and changing bases, assume
$$
\Phi(0)=
\begin{pmatrix}
\id_{r-1} & 0\\
0 & 0\\
0 & 0\\
0 & 0
\end{pmatrix}.
$$
Near this point, $\Sigma$ is cut out by the last three entries of the final column. By transversality, their pullbacks have independent differentials. Thus, after choosing local coordinates $(z_1,z_2,z_3)$ on $B$ and changing bases again,
$$
\Phi=
\begin{pmatrix}
\id_{r-1} & 0\\
0 & z_1\\
0 & z_2\\
0 & z_3
\end{pmatrix}.
$$
After cancelling the trivial summand, this is the simple model.
\end{proof}

\begin{rmk}
The converse also holds, but will not be used.
\end{rmk}

\begin{lem}
    There exists $\Phi'\in M$ such that for $0<|t|\ll 1$, $\Phi_t=\Phi+t\Phi'$ drops rank precisely at the smooth locus of $\Sigma$, and $\Phi_t$ is transverse to $\Sigma$ at those points.
\end{lem}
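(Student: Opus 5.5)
The plan is a parametric transversality (Sard) argument applied to the affine family $\Phi_t=\Phi+t\Phi'$, followed by a properness argument near $t=0$.

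\textbf{Step 1: relative transversality.} Consider the holomorphic map
$$
G:B\times M\times \CBbb^*\to M,\qquad G(x,\Phi',t)=\Phi(x)+t\Phi'.
$$
For fixed $(x,t)$ with $t\neq 0$, this is an affine isomorphism in $\Phi'$, so $G$ is a submersion. Hence $G$ is transverse to every stratum of $\Sigma$. These strata are the determinantal loci
$$
\Sigma_k=\{\rank A=r-k\},\qquad k\ge 1,
$$
and $\Sigma_k$ has codimension $k(k+2)$. Thus $\Sigma_{\mathrm{sm}}=\Sigma_1$, and every deeper stratum has codimension at least $8$.

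\textbf{Step 2: choice of $\Phi'$.} By the parametric transversality theorem (Sard applied to the projection of $G^{-1}(\Sigma_k)$ to the parameter $(\Phi',t)$), for generic $(\Phi',t)$ the map $\Phi_t$ is transverse to each $\Sigma_k$. For $k\ge2$, transversality means $\Phi_t$ misses $\Sigma_k$, since $\dim B=3<8$. Concretely, $G^{-1}(\Sigma_k)$ has dimension
$$
3+\dim M+1-k(k+2)<\dim M,
$$
so its projection to $M$ (ignoring $t$) is a proper analytic subset. Working on a slightly shrunken ball, a generic $\Phi'$ therefore gives, for all $t\neq0$ with $|t|$ small, that $\Phi_t$ avoids $\Sigma_{\ge2}$. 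For $\Sigma_1$, the set of bad pairs $(\Phi',t)$ is the critical-value set of the projection $G^{-1}(\Sigma_1)\to M\times\CBbb^*$. This set is contained in a countable union of proper analytic subsets. Hence for generic $\Phi'$, the bad $t$-set is contained in a proper analytic subset of a disk, which is discrete. It remains to see that $t=0$ is not an accumulation point of these bad $t$.

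\textbf{Step 3: the main obstacle, uniformity as $t\to0$.} This is the step I expect to be hardest. I would avoid a Sard argument on the whole $(\Phi',t)$-space and instead use the $\CBbb^*$-scaling structure: rewrite $\Phi+t\Phi'$ as $t(\Phi'+t^{-1}\Phi)$. Then work in the algebraic setting, replacing $\Phi$ by a polynomial truncation of high order. This is allowed because finite-colength data, and hence the sheaf near the isolated singularities, is determined up to a high jet by finite determinacy.

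With this reduction, the bad locus $\{(\Phi',t)\}$ is constructible. For generic $\Phi'$, its fiber over $\Phi'$ is then a constructible subset of the $t$-line that is not the whole line, so it is finite. After shrinking $\Delta$, it therefore avoids $0<|t|\ll1$.

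Near $\partial B$, $\Phi$ has full rank. Full rank is an open condition, so after shrinking $\Delta$ it persists for $\Phi_t$ on a compact collar. Combining this with Step 2 shows that $\Phi_t$ drops rank only at points mapping into $\Sigma_{\mathrm{sm}}$, and that it meets $\Sigma$ transversally there.
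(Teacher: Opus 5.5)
Your Steps 1--2 set up the right parametric transversality, and the count $\operatorname{codim}\Sigma_k=k(k+2)\ge 8>3$ correctly disposes of $\Sigma_{\geq 2}$. The gap is in Step 3, which is where the real content of the lemma lies, namely the behaviour as $t\to 0$.

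\emph{The jet replacement proves the wrong statement.} Replacing $\Phi$ by a polynomial jet $\tilde\Phi$ gives a result about $\tilde\Phi+t\Phi'$, not about $\Phi+t\Phi'$. Finite determinacy only gives $\tilde\Phi\circ\phi=g\Phi h^{-1}$, with $g,h$ invertible holomorphic matrices and $\phi$ a coordinate change. Under this equivalence, $\tilde\Phi+t\Phi'$ becomes a family equivalent to $\Phi+t\,g^{-1}\Phi'h$. Its perturbation direction is a non-constant matrix-valued function, not an element of $M$. A jet at one point also does not see the other singular points of $\Phi$ on $B$, whereas the lemma asks for one constant $\Phi'$ that works for all of them.

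\emph{The finiteness step is wrong as stated.} Even in the polynomial case, ``a constructible subset of the $t$-line that is not the whole line'' can be cofinite. What you need is that the bad set in $M\times\CBbb^*$ has dimension at most $\dim M$, so that its fibre over a generic $\Phi'$ is finite.

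\emph{Smaller points.} The rewriting $t(\Phi'+t^{-1}\Phi)$ is never actually used. In Step 2, images under non-proper projections need not be analytic. Also, a Sard null set in $M\times\CBbb^*$ need not have discrete fibres. So the ``proper analytic'' and ``discrete'' claims there are unjustified.

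\emph{The missing idea.} Keep $\xi=t\Phi'\in M$ as the parameter, so that $t=0$ is the interior point $\xi=0$ rather than a puncture.
\begin{enumerate}
\item Since $\Phi$ has full rank on a compact collar near $\partial B$, take $\xi$ in a small neighbourhood $U$ of $0$.
\item Let $W\subset B\times U$ be the analytic set of pairs $(x,\xi)$ with $\Phi(x)+\xi\in\Sigma_{\geq 2}$, or with $\Phi(x)+\xi\in\Sigma_{\mathrm{sm}}$ and non-transverse differential. It lies over a fixed compact subset of $B$.
\item Hence its image $Z\subset U$ is analytic by Remmert's proper mapping theorem.
\item $Z$ is nowhere dense by Sard, applied to the submersion $(x,\xi)\mapsto\Phi(x)+\xi$.
\item If $\Phi'$ lies off the tangent cone of $Z$ at $0$ (a proper cone), the line $\CBbb\Phi'$ is not contained in the germ of $Z$. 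So it meets $Z$ in a discrete set, and $t\Phi'\notin Z$ for $0<|t|\ll 1$.
\end{enumerate}

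This is the paper's route, stated there tersely: the bad $\xi$ form a proper analytic subvariety, and one takes a curve through $0$ not contained in it. Your Step 2 already contains its transversality input. But by passing to $M\times\CBbb^*$ you discarded exactly the analyticity at $\xi=0$ that controls $t\to 0$.
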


\begin{proof}
This is a standard consequence of the complex analytic version of Sard's theorem. Consider the map
$$
\Psi:B\times M \to M,\qquad \Psi(x,\xi)=\Phi(x)+\xi.
$$
Since varying $\xi$ simply translates the image, the map $\Psi$ is transverse to $\Sigma$. More precisely, it is transverse to each smooth locus in the stratification of $\Sigma$.   It follows that the set of parameters $\xi\in M$ for which the translated map
$$
\Psi_\xi(x):=\Psi(x,\xi)=\Phi(x)+\xi
$$
fails to be transverse to $\Sigma$ is contained in a proper analytic subvariety of $M$. Therefore, we may choose a smooth curve $\Delta\subset M$ passing through $0$ such that, for every sufficiently small nonzero $t\in \Delta$, the map
$$
\Psi_t(x):=\Psi(x,t)
$$
is transverse to $\Sigma$. This proves the claim.
\end{proof}

Fix such a family $\Phi_t$, write $\Phi_\bullet:B\times\Delta\to M$ for the induced map, and set $\Ecal=\Phi_\bullet^*\Ucal$. Then Proposition \ref{Perturbation} follows from the next lemma.

\begin{lem}
After shrinking $\Delta$, the sheaf $\Ecal$ is reflexive on $B\times \Delta$ and flat over $\Delta$. Moreover, $\Ccal\cong \Fcal$, for every $t\neq 0$ all singularities of $\Ecal_t$ are simple, and $\Ecal$ is locally free near $\partial B\times \Delta$.
\end{lem}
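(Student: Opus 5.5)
The plan is to read every assertion off the presentation
$$
0\to \Ocal_X^r \xrightarrow{\Phi_\bullet} \Ocal_X^{r+2}\to \Ecal\to 0,
$$
which is the pullback along $\Phi_\bullet$ of the tautological presentation of $\Ucal$. The map $\Phi_\bullet$ is injective as a sheaf map because it has generic rank $r$. Hence $\Ecal$ has a locally free resolution of length one, and its non-locally-free locus is the degeneracy locus
$$
D=\{(x,t)\in X:\rank\Phi_t(x)\leq r-1\}=\Phi_\bullet^{-1}(\Sigma),
$$
which is a closed analytic subset of $X$. Throughout, I would work on a slightly smaller concentric ball, which I still call $B$, so that compactness arguments up to the boundary are available.

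\emph{Step 1: the boundary and the dimension of $D$.} The singularities of $\Fcal$ are isolated, hence lie in the interior. So $\Phi=\Phi_0$ has full rank $r$ on a compact shell $K=\overline{B}\setminus B(\rho)$. Full rank is an open condition, so by compactness of $K$, after shrinking $\Delta$, each $\Phi_t$ has full rank on $K$ for all $t\in\Delta$. This gives local freeness of $\Ecal$ near $\partial B\times\Delta$.

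It also shows that $D\subset B(\rho)\times\Delta$ is proper over $\Delta$. The fibers of $D$ over $\Delta$ are finite:
\begin{itemize}
\item over $t=0$, the fiber is the finite singular set of $\Fcal$;
\item over $t\neq0$, transversality to $\Sigma$, which has codimension three in $M$, means that in the three-dimensional $B$ the preimage of $\Sigma$ is discrete, and it is finite by properness.
\end{itemize}
Hence $\dim D\leq 1$, so $D$ has codimension at least $3$ in the fourfold $X$.

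\emph{Step 2: reflexivity and flatness.} Dualizing the resolution gives $\Ecal xt^i(\Ecal,\Ocal_X)=0$ for $i\geq2$. It also shows that $\Ecal xt^1(\Ecal,\Ocal_X)=\Ccal oker(\Phi_\bullet^t)$ is supported on $D$, which has codimension $\geq3$. By the standard criterion on a smooth space, a coherent sheaf is reflexive when $\operatorname{codim}\operatorname{supp}\Ecal xt^i(\Ecal,\Ocal)\geq i+2$ for all $i\geq1$. That criterion applies here, so $\Ecal$ is reflexive.

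For flatness, restrict the resolution to $B_t$. The map $\Phi_t$ is still generically of rank $r$ on $B_t$, hence injective, so $\Tcal or_1^{\Ocal_X}(\Ecal,\Ocal_{B_t})=0$ for every $t$. The local criterion over the curve $\Delta$ then gives flatness. (Alternatively, once the fibers are known to be rank-two torsion-free with isolated singularities, Lemma \ref{lem-flatness} applies.) The same restriction gives $\Ecal_t=\Ccal oker(\Phi_t)$. Since $\Phi_0=\Phi$, we get $\Ccal\cong\Fcal$.

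\emph{Step 3: simplicity for $t\neq0$.} For $t\neq0$, $\Ecal_t$ has a length-one resolution whose degeneracy locus is finite. By the same Ext criterion on $B$ (codimension $3\geq i+2$ for $i=1$), $\Ecal_t$ is a rank-two reflexive sheaf with isolated singularities, located exactly at the points of $\Phi_t^{-1}(\Sigma)$. By the preceding lemma, each such point maps into $\Sigma_{\mathrm{sm}}$ with $\Phi_t$ transverse to $\Sigma$ there. Lemma \ref{TransversalImpliesSimple} applied to the presentation $\Phi_t$ then shows that every singularity is simple.

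The step I expect to be most delicate is Step 1. The preceding lemma gives transversality only for each fixed small $t$, and one has to make sure that no degeneracy points escape to the boundary or accumulate, uniformly as $t\to0$. The compactness argument on the shell $K$, together with properness of $D\to\Delta$, is what I would use to control this. It is also what justifies $\dim D\leq 1$, which is needed for reflexivity along the central fiber.
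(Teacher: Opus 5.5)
Your proposal is correct and follows essentially the same route as the paper. Codimension $\geq 3$ of the degeneracy locus together with projective dimension one gives reflexivity; your $\Ecal xt$-support criterion is the same Serre-type criterion the paper phrases via Auslander--Buchsbaum and $S_2$. The transversality lemma then gives simplicity, and openness of full rank near the boundary gives local freeness there. The only small variations are two. You obtain flatness directly from fiberwise injectivity of $\Phi_t$ (Tor vanishing), rather than from reflexivity implying $\Delta$-torsion-freeness. You also make explicit the compactness on a closed shell that justifies shrinking $\Delta$ near $\partial B$, which the paper leaves implicit.
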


\begin{proof}
By construction,
$$
\Ecal_t=\Phi_t^*\Ucal.
$$
Thus $\Ccal\cong \Fcal$. For $t\neq 0$, the map $\Phi_t$ meets $\Sigma$ only along $\Sigma_{\mathrm{sm}}$ and is transverse there. Hence Lemma \ref{TransversalImpliesSimple} implies that all singularities of $\Ecal_t$ are simple.

It remains to prove reflexivity. The construction gives an exact sequence
$$
0\to \Ocal_{B\times \Delta}^r \xrightarrow{\Phi_\bullet} \Ocal_{B\times \Delta}^{r+2}\to \Ecal\to 0.
$$
The degeneracy locus of $\Phi_\bullet$ is finite over $\Delta$, hence has codimension at least three in $B\times \Delta$. Therefore $\Ecal$ is locally free in codimension two. Since $B\times \Delta$ is smooth and $\Ecal$ has projective dimension at most one, by the Auslander--Buchsbaum formula, $\Ecal$ satisfies $S_2$. Thus $\Ecal$ is reflexive. In particular, $\Ecal$ is torsion-free over the smooth curve $\Delta$, hence flat over $\Delta$.

Finally, since $\Fcal$ is locally free near $\partial B$, the map $\Phi$ avoids $\Sigma$ there. After shrinking $\Delta$, the same holds for $\Phi_t$, so $\Ecal$ is locally free near $\partial B\times \Delta$.
\end{proof}

\begin{proof}[Proof of Theorem \ref{Theorem-2}]
Let $\Fcal$ be a rank-two torsion-free sheaf with isolated point singularities contained in $B(1/2)$. The quotient $\Fcal^{**}/\Fcal$ has finite length and $\Ecal xt^1(\Fcal,\Ocal_B)\cong \Ecal xt^1(\Fcal^{**},\Ocal_B)$, so
$$
c_3(\Fcal)\equiv c_3(\Fcal^{**}) \pmod{2}.
$$
Moreover, $\Fcal$ and $\Fcal^{**}$ agree away from finitely many points, hence have the same $\delta$-invariant. Thus we may assume that $\Fcal$ is reflexive.

By Proposition \ref{Perturbation}, choose a flat family $\Ecal$ with $\Ccal\cong \Fcal$ such that, for $0<|t|\ll 1$, all singularities of $\Ecal_t$ are simple and contained in $B(1/2)$. By Theorem \ref{Theorem-1},
$$
c_3(\Ecal_t)=c_3(\Ccal)=c_3(\Fcal).
$$
Since the family is locally free near $\partial B\times \Delta$,
$$
\delta(\Ecal_t)=\delta(\Fcal).
$$
It remains to prove
$$
\delta(\Ecal_t)\equiv c_3(\Ecal_t)\pmod{2}.
$$
Let $x_i$ be the singular points of $\Ecal_t$. By Proposition \ref{AdditivePropertyOfTopologicalInvariants} and the definition of $c_3(\Ecal_t)$, it suffices to check
$$
\delta(\Ecal_t,x_i)\equiv c_3(\Ecal_t,x_i)\pmod{2}
$$
for each $i$. Since all $x_i$ are simple singularities, this follows from Lemma \ref{TopologicalInvairantsOfSimplesSingularities}.
\end{proof}

\subsection{Computations of the invariants}
The following elementary fact will be needed. 
\begin{lem}\label{Lemma-Homogeneous}
Suppose $\Fcal$ is homogeneous, i.e.
$$
\Fcal\cong j_*\rho^*\underline{\Fcal},
$$
where $\rho:B\setminus\{0\}\to \PBbb^2$ is the natural projection, $j:B\setminus\{0\}\hookrightarrow B$, and $\underline{\Fcal}$ is a rank-two holomorphic vector bundle on $\PBbb^2$. Then
$$
\ell\bigl(\Ecal xt^1(\Fcal,\Ocal_B),0\bigr)
=
\dim_{\CBbb}\left(\bigoplus_{k\in\ZBbb}H^1(\PBbb^2,\underline{\Fcal}(k))\right).
$$
\end{lem}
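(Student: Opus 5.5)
We compute $\Ecal xt^1(\Fcal,\Ocal_B)_0$ via local cohomology, using that $\Fcal$ is the graded module of sections of $\underline{\Fcal}$.

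\textbf{Step 1: identify $\Fcal_0$ with a graded module.} Since $\Fcal=j_*\rho^*\underline{\Fcal}$ is reflexive, its stalk at $0$ (or its completion) is governed by the graded $S=\CBbb[z_1,z_2,z_3]$-module
$$
F=\bigoplus_{k\in\ZBbb}H^0(\PBbb^2,\underline{\Fcal}(k)).
$$
The argument is that sections over a punctured neighbourhood extend by Hartogs, and a section of $\rho^*\underline{\Fcal}$ on $B\setminus\{0\}$ expands in homogeneous components, each a section of $\underline{\Fcal}(k)$. Working with the completion $\widehat{\Ocal}_0=\CBbb[[z]]$ is harmless, since lengths of finite-length modules are unchanged by completion and $\Ecal xt$ commutes with flat base change. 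So $\widehat{\Fcal}_0\cong F\otimes_S\widehat S$, and $F$ is a finitely generated graded reflexive $S$-module.

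\textbf{Step 2: local duality.} With $\mathfrak m=(z_1,z_2,z_3)$, graded local duality on the regular ring $S$ of dimension $3$ gives
$$
\operatorname{Ext}^1_S(F,S(-3))\cong \operatorname{Hom}_{\CBbb}\bigl(H^2_{\mathfrak m}(F),\CBbb\bigr)
$$
as graded vector spaces. The twist does not affect the total dimension. Since $\Fcal$ is locally free off $0$, $\operatorname{Ext}^1_S(F,S)$ has finite length, so both sides are finite dimensional.

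\textbf{Step 3: compute $H^2_{\mathfrak m}(F)$.} By the standard comparison with sheaf cohomology on $\PBbb^2=\operatorname{Proj}S$, for $i\ge 2$ we have
$$
H^i_{\mathfrak m}(F)\cong\bigoplus_k H^{i-1}(\PBbb^2,\underline{\Fcal}(k)).
$$
Hence $H^2_{\mathfrak m}(F)\cong\bigoplus_k H^1(\PBbb^2,\underline{\Fcal}(k))$. Taking dimensions and passing back through the completion gives the stated formula.

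\textbf{Main obstacle.} The delicate point is Step 1: justifying that the analytic stalk, or at least its completion, is the completion of the algebraic graded module $F$, including convergence issues. I would handle this by restricting to $\CBbb^*$-invariant data. The scaling action makes $\Fcal$ equivariant, so $\Ecal xt^1(\Fcal,\Ocal)_0$ is a finite-length equivariant module, and it decomposes into weight spaces which can be computed by homogeneous (polynomial) sections. This reduces the comparison to $F$ without analytic subtleties.
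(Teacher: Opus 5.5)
Your proposal is correct, and it takes a different route from the paper's.

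\textbf{The paper's route.} The paper never algebraizes the germ. It works on the blow-up $q:\widehat B\to B$ with the locally free extension $\widehat{\Fcal}$, where $\widehat{\Fcal}|_D\cong\underline{\Fcal}$, and filters by the twists $\widehat{\Fcal}(k-1)\subset\widehat{\Fcal}(k)$. Homogeneity splits the pushed-forward long exact sequences into $0\to R^1q_*\widehat{\Fcal}(k-1)\to R^1q_*\widehat{\Fcal}(k)\to H^1(\PBbb^2,\underline{\Fcal}(-k))\to 0$, so the stable value of $R^1q_*\widehat{\Fcal}(k)$ has length $\sum_k h^1(\underline{\Fcal}(k))$. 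The paper then separately identifies this stable sheaf with $\Ecal xt^1(\Fcal^*,\Ocal_B)\cong \Ecal xt^1(\Fcal,\Ocal_B)$. It does so by pushing forward a dualized resolution and using the rank-two self-duality $\Fcal^*\cong\Fcal$.

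\textbf{Your route.} You replace the whole blow-up computation with two standard facts of graded algebra over $S=\CBbb[z_1,z_2,z_3]$, applied to $F=\bigoplus_k H^0(\PBbb^2,\underline{\Fcal}(k))$: graded local duality, and the comparison $H^{i+1}_{\mathfrak m}(M)\cong\bigoplus_k H^i(\PBbb^2,\widetilde M(k))$.

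\textbf{What each buys.} Your argument is shorter and needs neither rank two nor self-duality. It also gives a graded refinement: each weight space of $\Ecal xt^1$ is dual to a single $H^1(\PBbb^2,\underline{\Fcal}(k))$. The paper's argument stays analytic throughout. It also realizes $\Ecal xt^1(\Fcal,\Ocal_B)$ as $R^1q_*$ of a twist of the extension on the blow-up, which fits the blow-up and link viewpoint used in Corollary \ref{Computation} and in the tangent-cone discussion.

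\textbf{Closing Step 1.} Step 1 is the only place needing care, and you can settle it without completions, convergence estimates, or weight-space arguments.
\begin{enumerate}
\item $F=\Gamma_*(\underline{\Fcal})$ has $H^0_{\mathfrak m}(F)=H^1_{\mathfrak m}(F)=0$ and is locally free away from $\mathfrak m$. Hence it defines a reflexive coherent sheaf on $\mathbb{A}^3$ whose restriction to $\mathbb{A}^3\setminus\{0\}$ is $\pi^*\underline{\Fcal}$, where $\pi:\mathbb{A}^3\setminus\{0\}\to\PBbb^2$ is the projection.
\item Its analytification is still reflexive, since analytification is flat. By GAGA on $\PBbb^2$ it agrees with $\rho^*\underline{\Fcal}$ off the origin.
\item A reflexive sheaf is the push-forward of its restriction off a point, so this analytification equals $j_*\rho^*\underline{\Fcal}=\Fcal$ on $B$.
\item Flatness of $S_{\mathfrak m}\to\Ocal_{B,0}$ then gives $\Ecal xt^1(\Fcal,\Ocal_B)_0\cong \operatorname{Ext}^1_S(F,S)\otimes_S\Ocal_{B,0}$.
\item This module has the same length as $\operatorname{Ext}^1_S(F,S)$, because the latter is a finite-length module supported at $\mathfrak m$ and $\mathfrak m\Ocal_{B,0}$ is the maximal ideal of $\Ocal_{B,0}$.
\end{enumerate}
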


\begin{proof}
Let $q:\widehat B\to B$ be the blow-up of $B$ at $0$, let $D=q^{-1}(0)\cong\PBbb^2$ be the exceptional divisor, and let $\kappa:D\hookrightarrow \widehat B$ be the natural inclusion. Since $\Fcal$ is homogeneous, it is obtained from a locally free sheaf $\widehat{\Fcal}$ on $\widehat B$ whose restriction to the exceptional divisor is
$$
\widehat{\Fcal}|_D\cong \underline{\Fcal}.
$$
For $k\in\ZBbb$, set
$$
\widehat{\Fcal}(k):=\widehat{\Fcal}\otimes\Ocal_{\widehat B}(kD).
$$
With this convention, we have the standard exact sequence
$$
0\to \widehat{\Fcal}(k-1)\to \widehat{\Fcal}(k)\to \kappa_*\underline{\Fcal}(-k)\to 0.
$$
Applying $q_*$ to this short exact sequence gives the long exact sequence
$$
\begin{aligned}
0&\to q_*\widehat{\Fcal}(k-1)\to q_*\widehat{\Fcal}(k)\to H^0(\PBbb^2,\underline{\Fcal}(-k))\\
&\to R^1q_*\widehat{\Fcal}(k-1)\to R^1q_*\widehat{\Fcal}(k)\to H^1(\PBbb^2,\underline{\Fcal}(-k))\\
&\to R^2q_*\widehat{\Fcal}(k-1)\to R^2q_*\widehat{\Fcal}(k)\to H^2(\PBbb^2,\underline{\Fcal}(-k))\to 0.
\end{aligned}
$$
Since $\widehat{\Fcal}$ is homogeneous, this yields the exact sequence
$$
0\to R^1q_*\widehat{\Fcal}(k-1)\to R^1q_*\widehat{\Fcal}(k)\to H^1(\PBbb^2,\underline{\Fcal}(-k))\to 0.
$$
In particular, for $k\gg 0$, the sequence stabilizes:
$$
R^1q_*\widehat{\Fcal}(k)=R^1q_*\widehat{\Fcal}(k+1)=\cdots.
$$
It follows from the exact sequences above that, for $k\gg 0$,
$$
\ell\bigl(R^1q_*\widehat{\Fcal}(k),0\bigr)
=
\dim_{\CBbb}\left(\bigoplus_{j\in\ZBbb}H^1(\PBbb^2,\underline{\Fcal}(j))\right).
$$

It remains to identify the stable sheaf $R^1q_*\widehat{\Fcal}(k)$ with $\Ecal xt^1(\Fcal,\Ocal_B)$ for $k\gg0$. Choose a resolution of $\underline{\Fcal}^*$, which naturally induces a resolution of $\widehat{\Fcal}^*$ of the form
$$
0\to \Ocal_{\widehat B}(nD)^r
\to \Ocal_{\widehat B}(mD)^{r+2}
\to \widehat{\Fcal}^* \to 0.
$$
Dualizing, we obtain
$$
0\to \widehat{\Fcal}
\to \Ocal_{\widehat B}(-mD)^{r+2}
\to \Ocal_{\widehat B}(-nD)^r \to 0.
$$
Tensoring with $\Ocal_{\widehat B}(kD)$ for $k\gg0$, and then pushing forward, gives
$$
0\to \Fcal
\to \Ocal_B^{r+2}
\to \Ocal_B^r
\to R^1q_*\widehat{\Fcal}(k) \to 0.
$$
Here we have used the fact that
$$
q_*\widehat{\Fcal}(k)\cong \Fcal
$$
for $k\gg0$; see \cite[Lemma~2.7]{Chen2025}. On the other hand, taking double dual of this sequence yields
$$
0\to \Fcal
\to \Ocal_B^{r+2}
\to \Ocal_B^r
\to \Ecal xt^1(\Fcal^*,\Ocal_B) \to 0,
$$
where we used the fact that
$$
\Ecal xt^1(\Fcal^*,\Ocal_B)\cong \Ecal xt^1(\Gcal,\Ocal_B)
$$
for any $\Gcal\subset \Hcal$ with $\Hcal$ locally free and $\Hcal/\Gcal$ supported at $0$. Comparing the two exact sequences, we obtain
$$
R^1q_*\widehat{\Fcal}(k)\cong \Ecal xt^1(\Fcal^*,\Ocal_B)
$$
for $k\gg0$. After shrinking $B$ and trivializing $\det\Fcal$, the rank-two reflexive identification gives $\Fcal^*\cong\Fcal$. Thus
$$
R^1q_*\widehat{\Fcal}(k)\cong \Ecal xt^1(\Fcal,\Ocal_B).
$$
This completes the proof.
\end{proof}
We now record a few simple topological consequences of Theorem
\ref{Theorem-2}.

\begin{cor}\label{Proposition-Z2 invariants for homogenous singularity}
Suppose $\Fcal$ is locally homogeneous at $0$, that is,
$$
\Fcal=j_*\rho^*\underline{\Fcal}
$$
for some rank-two holomorphic vector bundle $\underline{\Fcal}$ on
$\PBbb^2$. Then
$$
\delta(\Fcal)\equiv
c_1(\underline{\Fcal})c_2(\underline{\Fcal})
\pmod{2}.
$$
\end{cor}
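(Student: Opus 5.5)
By Theorem \ref{Theorem-2}, $\delta(\Fcal)\equiv c_3(\Fcal)\pmod 2$. A homogeneous $\Fcal=j_*\rho^*\underline{\Fcal}$ is reflexive and singular only at $0$, so $c_3(\Fcal)=c_3(\Fcal,0)=\ell(\Ecal xt^1(\Fcal,\Ocal_B),0)$. Lemma \ref{Lemma-Homogeneous} then gives
$$
\delta(\Fcal)\equiv \sum_{k\in\ZBbb} h^1(\PBbb^2,\underline{\Fcal}(k)) \pmod 2 .
$$
The task therefore reduces to a statement about rank-two bundles on $\PBbb^2$: the total dimension of $\bigoplus_k H^1(\PBbb^2,\underline{\Fcal}(k))$ has the parity of $c_1(\underline{\Fcal})c_2(\underline{\Fcal})$. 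Since $\Fcal$ depends only on $\underline{\Fcal}$ up to twisting by $\Ocal(a)$ (the pullback of $\Ocal(a)$ to $B\setminus\{0\}$ extends trivially), both sides should be twist-invariant. This is a check to make. Under $c_1\mapsto c_1+2a$ and $c_2\mapsto c_2+ac_1+a^2$, we have $c_1c_2\mapsto c_1c_2+ac_1^2+a^2c_1+2(\dots)$, and $ac_1^2+a^2c_1=ac_1(c_1+a)$ is even. So we may normalize $c_1\in\{0,-1\}$.

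For the parity, use Serre duality. We have $H^1(\underline{\Fcal}(k))^\vee\cong H^1(\underline{\Fcal}^*(-k-3))$ and $\underline{\Fcal}^*\cong\underline{\Fcal}(-c_1)$. Hence $h^1(\underline{\Fcal}(k))=h^1(\underline{\Fcal}(-k-3-c_1))$, so $k\mapsto -k-3-c_1$ is an involution on the indices. If $c_1=0$, the involution $k\mapsto -k-3$ has no fixed point, so the sum is even, matching $c_1c_2=0$. If $c_1=-1$, the involution is $k\mapsto -k-2$ with unique fixed point $k=-1$, so the sum has the parity of $h^1(\underline{\Fcal}(-1))$.

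It remains to show $h^1(\underline{\Fcal}(-1))\equiv c_2\pmod 2$ when $c_1=-1$. This is the main point. Riemann--Roch gives $\chi(\underline{\Fcal}(-1))=-c_2$ after a routine computation; the constant terms should cancel, which I will verify. Also $h^0(\underline{\Fcal}(-1))=h^2(\underline{\Fcal}(-1))$ by Serre duality, since $\underline{\Fcal}(-1)^*\otimes K=\underline{\Fcal}(1-c_1-3)=\underline{\Fcal}(-1)$. Therefore $h^1(\underline{\Fcal}(-1))=c_2+2h^0(\underline{\Fcal}(-1))\equiv c_2$, and $c_1c_2=-c_2\equiv c_2$, which finishes the argument. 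The only genuine obstacle is confirming that the Riemann--Roch constant vanishes exactly in the normalization $c_1=-1$, $k=-1$, and tracking the sign conventions for the duality $\underline{\Fcal}^*\cong\underline{\Fcal}(-c_1)$.
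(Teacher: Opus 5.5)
Your proposal is correct and follows the paper's proof essentially step for step. Theorem \ref{Theorem-2} and Lemma \ref{Lemma-Homogeneous} reduce the claim to the parity of $\sum_k h^1(\PBbb^2,\underline{\Fcal}(k))$; you then normalize $c_1\in\{0,-1\}$ by twisting, pair the summands via Serre duality, and handle the fixed term $k=-1$ by Riemann--Roch, exactly as the paper does. Your extra checks are all correct and fill in details the paper leaves implicit: that twisting leaves both $\Fcal$ and $c_1c_2 \bmod 2$ unchanged, that $\chi(\underline{\Fcal}(-1))=-c_2$, and that $h^0=h^2$ for $\underline{\Fcal}(-1)$.
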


\begin{proof}
By Theorem \ref{Theorem-2}, we have
$$
\delta(\Fcal)\equiv \ell\bigl(\Ecal xt^1(\Fcal,\Ocal)\bigr)\pmod{2}.
$$
Thus it suffices to prove that
$$
\ell\bigl(\Ecal xt^1(\Fcal,\Ocal)\bigr)
\equiv
c_1(\underline{\Fcal})c_2(\underline{\Fcal})
\pmod{2}.
$$
After twisting by a line bundle, we may assume without loss of generality that
$$
c_1(\underline{\Fcal})\in \{0,-1\}.
$$
By Lemma \ref{Lemma-Homogeneous}, we have
$$
\ell(\Ecal xt^1(\Fcal,\Ocal))
=\dim_{\CBbb}
(\bigoplus_k H^1(\PBbb^2,\underline{\Fcal}(k))).
$$
On the other hand, Serre duality gives
$$
H^1(\PBbb^2,\underline{\Fcal}(k))
\cong
H^1(\PBbb^2,\underline{\Fcal}^*(-3-k))^*.
$$
If $c_1(\underline{\Fcal})=0$, then
$\underline{\Fcal}^*\cong \underline{\Fcal}$, and hence
$$
H^1(\PBbb^2,\underline{\Fcal}(k))
\cong
H^1(\PBbb^2,\underline{\Fcal}(-3-k))^*.
$$
The summands are therefore paired with no fixed term, so
$$
\ell\bigl(\Ecal xt^1(\Fcal,\Ocal)\bigr)\equiv 0\pmod{2}.
$$
This agrees with
$$
c_1(\underline{\Fcal})c_2(\underline{\Fcal})\equiv 0\pmod{2}.
$$
If $c_1(\underline{\Fcal})=-1$, then
$\underline{\Fcal}^*\cong \underline{\Fcal}(1)$, and hence
$$
H^1(\PBbb^2,\underline{\Fcal}(k))
\cong
H^1(\PBbb^2,\underline{\Fcal}^*(-3-k))^*
\cong
H^1(\PBbb^2,\underline{\Fcal}(-2-k))^*.
$$
Thus all summands are paired except the middle term $k=-1$. Therefore
$$
\ell\bigl(\Ecal xt^1(\Fcal,\Ocal)\bigr)
\equiv
h^1(\PBbb^2,\underline{\Fcal}(-1))
\pmod{2}.
$$
Computing using the Riemann--Roch formula, we have
$$
h^1(\PBbb^2,\underline{\Fcal}(-1))
\equiv c_2(\underline{\Fcal}) \pmod{2}.
$$
Since $c_1(\underline{\Fcal})\equiv 1\pmod{2}$ in this case, this gives
$$
\ell\bigl(\Ecal xt^1(\Fcal,\Ocal)\bigr)
\equiv
c_1(\underline{\Fcal})c_2(\underline{\Fcal})
\pmod{2}.
$$
The proof is complete.
\end{proof}

In particular, we obtain the following topological consequence.

\begin{cor}\label{Corollary-3.13}
For any smooth complex rank-two vector bundle $\underline F$ on $\PBbb^2$, let $F$ be its natural pullback to $S^5$. Then
$$
\delta(F)\equiv c_1(\underline F)c_2(\underline F)\pmod{2}.
$$
\end{cor}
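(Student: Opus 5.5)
The plan is to reduce Corollary \ref{Corollary-3.13} to Corollary \ref{Proposition-Z2 invariants for homogenous singularity}. The statement concerns an arbitrary smooth complex rank-two bundle $\underline F$ on $\PBbb^2$, whereas that corollary concerns holomorphic bundles. So the first step is to realize the topological type of $\underline F$ by a holomorphic rank-two vector bundle $\underline{\Fcal}$ on $\PBbb^2$. Topological complex rank-two bundles on $\PBbb^2$ are classified by $(c_1,c_2)\in\ZBbb^2$, a standard fact since $\PBbb^2$ is a $4$-dimensional CW complex. By Schwarzenberger's theorem, every pair $(c_1,c_2)$ is realized by a holomorphic (indeed algebraic) rank-two bundle. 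If one wants to avoid quoting Schwarzenberger, explicit choices suffice. Take $\Ocal(a)\oplus\Ocal(b)$, which gives $c_1=a+b$ and $c_2=ab$. Alternatively, take the kernels and extensions from the Serre correspondence, which give bundles with $c_1=-1$ or $0$ and arbitrary $c_2$; twisting then shifts $c_1$. Fix such a $\underline{\Fcal}$ with $c_i(\underline{\Fcal})=c_i(\underline F)$, so that $\underline{\Fcal}$ is topologically isomorphic to $\underline F$.

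Next, form the homogeneous sheaf $\Fcal=j_*\rho^*\underline{\Fcal}$ on $B$. This sheaf is reflexive, is locally free away from $0$, and has at most an isolated singularity at $0$. Its restriction to any sphere $S^5$ centered at $0$ is the pullback of $\underline{\Fcal}$ under the Hopf-type map $S^5\to\PBbb^2$, which is the restriction of $\rho$. As a topological bundle on $S^5$, this pullback is therefore isomorphic to $F$, the pullback of $\underline F$. It follows that $\delta(F)=\delta(\Fcal)$, where $\delta(\Fcal)$ is computed on the boundary sphere, or equivalently on a small sphere around $0$, because $\Fcal$ is locally free on the annulus in between. One degenerate case needs a remark. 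If $\Fcal$ happens to be locally free at $0$, which is the case $\underline{\Fcal}$ a sum $\Ocal(a)\oplus\Ocal(a)$ up to twist, then it has no singular point at all. Corollary \ref{Proposition-Z2 invariants for homogenous singularity} still applies, or we can check the claim directly: $F$ is trivial, and $c_1c_2=2a\cdot a^2\equiv0$.

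Finally, apply Corollary \ref{Proposition-Z2 invariants for homogenous singularity} to get $\delta(\Fcal)\equiv c_1(\underline{\Fcal})c_2(\underline{\Fcal})\pmod 2$. Combined with the equality of Chern classes and the identification $\delta(F)=\delta(\Fcal)$ above, this gives the corollary.

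The main obstacle is the first step: one must ensure that every topological type is holomorphically realizable. I would quote Schwarzenberger's realization theorem for rank-two bundles on $\PBbb^2$, namely that every $(c_1,c_2)$ occurs. The identification of $\Fcal|_{S^5}$ with the pullback of $\underline{\Fcal}$ along the Hopf map is routine.
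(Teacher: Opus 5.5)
Your argument is correct and matches the paper's proof: realize $\underline F$ holomorphically via Schwarzenberger's theorem, then apply Corollary~\ref{Proposition-Z2 invariants for homogenous singularity} to the homogeneous sheaf $j_*\rho^*\underline{\Fcal}$, whose restriction to $S^5$ is $F$. One small correction to your degenerate-case aside: $\rho^*\Ocal(1)$ is holomorphically trivial on $B\setminus\{0\}$, so $j_*\rho^*\underline{\Fcal}\cong\Ocal_B^{2}$ is locally free for every split $\underline{\Fcal}\cong\Ocal(a)\oplus\Ocal(b)$, not only when $a=b$; the conclusion is unaffected because $(a+b)ab$ is always even.
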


\begin{proof}
By Schwarzenberger's theorem \cite[Theorem~6.2.1]{OSS11}, $\underline F$ admits a holomorphic structure. Denote the resulting holomorphic vector bundle by $\underline{\Fcal}$. The claim follows from Corollary \ref{Proposition-Z2 invariants for homogenous singularity}.
\end{proof}

As a consequence, we obtain Corollary \ref{Computation}.

\begin{proof}[Proof of Corollary \ref{Computation}]
Let $\widehat{\Fcal}$ be a locally free extension of $\Fcal$ on $\widehat B=\mathrm{Bl}_xB$, and let $D\cong\PBbb^2$ be the exceptional divisor. Let $\Fcal^c$ be the homogeneous singularity whose link is $\widehat{\Fcal}|_D$. Then $\Fcal$ and $\Fcal^c$ have the same underlying bundle on a small sphere around $x$, so
$$
\delta(\Fcal,x)=\delta(\Fcal^c,x).
$$
By Corollary \ref{Corollary-3.13},
$$
\delta(\Fcal^c,x)\equiv c_1(\widehat{\Fcal}|_D)c_2(\widehat{\Fcal}|_D)\pmod{2}.
$$
Applying Theorem \ref{Theorem-2} on a small ball around $x$ gives
$$
c_3(\Fcal,x)\equiv \delta(\Fcal,x)\pmod{2}.
$$
Combining these congruences gives
$$
c_3(\Fcal,x)\equiv c_1(\widehat{\Fcal}|_D)c_2(\widehat{\Fcal}|_D)\pmod{2},
$$
as claimed.
\end{proof}

\section{\texorpdfstring{Topological interpretations via relative $K$-groups}{Topological interpretations via relative K-groups}}\label{Section-K group}

For our purposes, we recall the relative $K$-theory constructions needed in the local setting and explain how the local third Chern class enters this framework.

\subsection{\texorpdfstring{The relevant relative $K$-groups}{The relevant relative K-groups}}

\subsubsection{\texorpdfstring{The topological relative $K$-group}{The topological relative K-group}}

Fix $0<r<1$ and set $S_r^5=\partial B(r)$. We use the standard identification
$$
K^0(\overline{B(r)},S_r^5)\cong \widetilde K^0(S^6),
$$
where $\overline{B(r)}/S_r^5\simeq S^6$; see \cite{Atiyah:2018} for background.

We recall how a finite complex exact on the boundary defines a relative $K$-class. Let
$$
\mathsf C^\bullet:\quad
0\to \mathsf C^0\xrightarrow{\partial_0}\mathsf C^1\xrightarrow{\partial_1}\cdots
\xrightarrow{\partial_{N-1}}\mathsf C^N\to 0
$$
be a finite complex of locally free sheaves on a neighborhood of $\overline{B(r)}$, exact on $\overline{B(r)}\setminus\{0\}$. Set
$$
\mathsf C^{\mathrm{ev}}:=\bigoplus_i\mathsf C^{2i},\qquad
\mathsf C^{\mathrm{odd}}:=\bigoplus_i\mathsf C^{2i+1}.
$$
After choosing Hermitian metrics on the $\mathsf C^i$, let $\partial_{\mathsf C}=\oplus_i\partial_i$ be the total differential and let $\partial_{\mathsf C}^*$ be its fiberwise adjoint. Then
$$
\mathcal D_{\mathsf C}:=\partial_{\mathsf C}+\partial_{\mathsf C}^*
$$
interchanges $\mathsf C^{\mathrm{ev}}$ and $\mathsf C^{\mathrm{odd}}$. Since $\mathsf C^\bullet$ is exact on $S_r^5$, the map
$$
\mathcal D_{\mathsf C}^+|_{S_r^5}:\mathsf C^{\mathrm{ev}}|_{S_r^5}\longrightarrow \mathsf C^{\mathrm{odd}}|_{S_r^5}
$$
is an isomorphism. Indeed, the kernel of $\mathcal D_{\mathsf C}$ on $S_r^5$ identifies with the cohomology of $\mathsf C^\bullet|_{S_r^5}$, which is zero by exactness. Therefore
$$
\beta_{\mathsf C^\bullet}:=
\left[\mathsf C^{\mathrm{ev}},\mathsf C^{\mathrm{odd}},\mathcal D_{\mathsf C}^+|_{S_r^5}\right]
\in K^0(\overline{B(r)},S_r^5)\cong \widetilde K^0(S^6).
$$
This class is independent of the Hermitian metrics, since different choices give homotopic boundary isomorphisms.

Set $\mathbf z=(z_1,z_2,z_3)$. The Koszul complex of the regular sequence $\mathbf z$ is
$$
\mathsf K^\bullet_{\mathbf z}:\quad
0\to \bigwedge^3(\Ocal_B^{\oplus 3})
\xrightarrow{\partial_{\mathbf z}}
\bigwedge^2(\Ocal_B^{\oplus 3})
\xrightarrow{\partial_{\mathbf z}}
\Ocal_B^{\oplus 3}
\xrightarrow{\mathbf z}
\Ocal_B\to 0,
$$
where $\partial_{\mathbf z}$ denotes the usual 
contraction. We place the last factor $\Ocal_B$ in degree $0$, so that $\bigwedge^j(\Ocal_B^{\oplus 3})$ lies in degree $-j$. Restricting to $\overline{B(r)}$, this complex is exact on $\overline{B(r)}\setminus\{0\}$ and hence defines a relative $K$-class
$$
\beta_{\mathsf K^\bullet_{\mathbf z}}\in K^0(\overline{B(r)},S_r^5)\cong \widetilde K^0(S^6).
$$

\begin{defi}
With this convention, the Bott class $\beta\in \widetilde K^0(S^6)$ is defined by
$$
\beta:=\beta_{\mathsf K^\bullet_{\mathbf z}}.
$$
\end{defi}

We use the following normalization of the Bott class; see
\cite[Section~2.6, pp.~98--100]{Atiyah:2018}.

\begin{prop}\label{BottGenerator}
The Bott class $\beta=\beta_{\mathsf K^\bullet_{\mathbf z}}$ generates $\widetilde K^0(S^6)$. Equivalently, it determines an isomorphism
$$
\widetilde K^0(S^6)\cong \ZBbb
$$
sending $\beta$ to $1$.
\end{prop}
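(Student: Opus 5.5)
The plan is to identify the class of the Koszul complex with the standard Bott generator of $\widetilde K^0(\CBbb^3)=K_c(\CBbb^3)$ from \cite{Atiyah:2018}, and then to import Bott periodicity.

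First, I would recast $\beta_{\mathsf K^\bullet_{\mathbf z}}$ as a difference bundle. The complex $\mathsf K^\bullet_{\mathbf z}$ is the Koszul complex of the tautological section $\mathbf z$ of the trivial bundle $V=\CBbb^3\times B$. Its even and odd parts are $\Lambda^{\mathrm{ev}}V$ and $\Lambda^{\mathrm{odd}}V$. With the standard Hermitian metric, the operator $\mathcal D^+_{\mathsf K}$ at a point $z$ is, up to the sign and duality conventions, Clifford multiplication $\iota_z+z\wedge$ (or $z\wedge+\iota_{z}^*$) from $\Lambda^{\mathrm{ev}}$ to $\Lambda^{\mathrm{odd}}$. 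Its square is $|z|^2\cdot\mathrm{id}$, so it is invertible on $S_r^5$. This is exactly the Atiyah--Bott--Shapiro representative of the Thom class $\lambda_V$ of $V$ over a point. A degree shift or reversal of parity conventions changes the class at most by a sign, and a sign does not affect generation.

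Second, I would invoke Bott periodicity in the form given in \cite[Section~2.6]{Atiyah:2018}: for a complex vector space $V$, $K_c(V)$ is free of rank one, generated by $\lambda_V=[\Lambda^{\mathrm{ev}}V,\Lambda^{\mathrm{odd}}V,\text{Clifford}]$. Equivalently, the external product of the generator $b\in\widetilde K^0(S^2)$ (the Hopf class $H-1$) with itself three times generates $\widetilde K^0(S^6)$. Multiplicativity of the Koszul construction gives $\mathsf K_{\mathbf z}=\mathsf K_{z_1}\boxtimes\mathsf K_{z_2}\boxtimes\mathsf K_{z_3}$. Each one-variable factor $[\Ocal\xrightarrow{z_i}\Ocal]$ on $(\overline{D},S^1)$ is $\pm(H-1)$, by the standard clutching computation with clutching function $z/|z|$ of degree one. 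Hence $\beta=\pm b^3$ is a generator, and the isomorphism $\widetilde K^0(S^6)\cong\ZBbb$ is normalized by $\beta\mapsto1$.

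The main obstacle is checking that the operator $\partial+\partial^*$ used in the paper's definition of $\beta_{\mathsf C^\bullet}$ is homotopic through boundary isomorphisms to the Clifford or product clutching map. This includes the compatibility of the independence-of-metric statement with taking products. I would handle it by noting that for a tensor product of complexes with the product metric, $\partial+\partial^*$ is the graded sum $D_1\hat\otimes1+1\hat\otimes D_2$. This is precisely the formula for the external product in relative $K$-theory. The remaining identifications are then sign bookkeeping, and the sign is irrelevant for the generation claim.
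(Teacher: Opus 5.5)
Your proposal is correct and follows essentially the same route as the paper, which gives no argument beyond citing the normalization in \cite[Section~2.6, pp.~98--100]{Atiyah:2018}: identify the Koszul class with the Thom class $\lambda_{\CBbb^3}$ up to sign, and invoke Bott periodicity (the Thom isomorphism over a point) there. Your sketch spells out the bookkeeping that the citation leaves implicit, and that bookkeeping does not affect generation; for instance, with the flat metric the paper's contraction gives $\mathcal D^+=\iota_{\bar z}+\bar z\wedge$, which is the pullback of $\lambda_{\CBbb^3}$ under complex conjugation and so changes the class at most by a sign.
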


Given this normalization, we make the following definition.

\begin{defi}
Let $\mathsf C^\bullet$ be a finite complex of locally free sheaves on a neighborhood of $\overline{B(r)}$, exact on $\overline{B(r)}\setminus\{0\}$. If
$$
\beta_{\mathsf C^\bullet}=\kappa\beta,
$$
then $\kappa$ is the relative $K$-theoretic charge of $\mathsf C^\bullet$.
\end{defi}

\subsubsection{The algebraic relative $K$-group and the Grothendieck group}

We recall the algebraic $K$-theoretic groups used below. Let $R=\Ocal_{\CBbb^3,0}$ and let $\mathfrak m=(z_1,z_2,z_3)$. We write $K^0_{\mathfrak m}(R)$ for the Grothendieck group of bounded complexes of finite free $R$-modules exact on $\Spec R\setminus\{\mathfrak m\}$, and $G_0^{\mathfrak m}(R)$ for the Grothendieck group of finite-length $R$-modules. See \cite[Chapter~V, Section~7.6]{Weibel:13} for $K$-theory with support and \cite[Chapter~II, Section~6]{Weibel:13} for $G_0$ of an abelian category.

We will use the following elementary $K_0$ fact. For the ingredients, see
\cite[Chapter~II, Proposition~6.6, Exercises~6.1, 6.3, 6.5, and Theorem~7.6]{Weibel:13}.

\begin{prop}\label{Prop-Algebraic-K-Groups}
Let $R=\Ocal_{\CBbb^3,0}$ and let $\mathfrak m=(z_1,z_2,z_3)$. Then
$$
K^0_{\mathfrak m}(R)\cong G^{\mathfrak m}_0(R)\cong \ZBbb.
$$
Under this identification, the Koszul complex $\mathsf K^\bullet_{\mathbf z}$ maps to the class $[R/\mathfrak m]$, hence to $1$.
\end{prop}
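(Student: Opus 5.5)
The plan is to build the two isomorphisms separately and then track the Koszul complex through both.

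\emph{The isomorphism $K^0_{\mathfrak m}(R)\to G^{\mathfrak m}_0(R)$.} Send a bounded complex $P^\bullet$ of finite free $R$-modules, exact off $\mathfrak m$, to its Euler characteristic
$$
\chi(P^\bullet)=\sum_i(-1)^i[H^i(P^\bullet)]\in G_0^{\mathfrak m}(R).
$$
Each cohomology module is finitely generated and supported at $\mathfrak m$, so it has finite length and the map lands in the right group. It is additive on short exact sequences of complexes, by the long exact cohomology sequence, and it kills acyclic complexes. Hence it descends to $K^0_{\mathfrak m}(R)$.

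For the inverse we use that $R$ is regular local of dimension three. Every finite-length module $N$ therefore has a finite free resolution $F_\bullet\to N$ of length at most three, by Auslander--Buchsbaum--Serre. This resolution is exact off $\mathfrak m$ because $N_{\mathfrak p}=0$ for $\mathfrak p\neq\mathfrak m$. Send $[N]$ to $[F_\bullet]$, reindexed cohomologically. Independence of the resolution and additivity follow from the horseshoe lemma and from comparison of resolutions. This map and $\chi$ are inverse: a bounded free complex exact off $\mathfrak m$ is quasi-isomorphic, by d\'evissage on its cohomology, to an iterated cone of resolutions of its cohomology modules. This is the resolution theorem with supports, cited from \cite[Chapter~II, Theorem~7.6]{Weibel:13} and \cite[Chapter~V, Section~7.6]{Weibel:13}, which we do not reprove. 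The main obstacle is matching the convention for $K^0_{\mathfrak m}(R)$ with the resolution-theorem statement. Concretely, one must check that quasi-isomorphic complexes, and not merely isomorphic ones, are identified in $K^0_{\mathfrak m}(R)$, so that $\chi$ is injective. We treat this as part of the cited theorem.

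\emph{The identification $G_0^{\mathfrak m}(R)\cong\ZBbb$ by length.} Every finite-length module has a composition series whose factors are all $R/\mathfrak m=\CBbb$, so $[N]=\ell(N)[R/\mathfrak m]$ and $[R/\mathfrak m]$ generates the group. Length is additive, so it gives a homomorphism $G_0^{\mathfrak m}(R)\to\ZBbb$ sending $[R/\mathfrak m]\mapsto1$. This map is surjective, and since $[R/\mathfrak m]$ generates, it is an isomorphism. This is d\'evissage \cite[Chapter~II, Proposition~6.6, Exercises~6.1, 6.3, 6.5]{Weibel:13}.

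\emph{The Koszul complex.} Since $z_1,z_2,z_3$ is a regular sequence in $R$, the Koszul complex $\mathsf K^\bullet_{\mathbf z}$ is acyclic except in degree $0$. In degree $0$ its cohomology is $R/(z_1,z_2,z_3)=R/\mathfrak m$. It is exact off $\mathfrak m$, so it defines a class in $K^0_{\mathfrak m}(R)$. With the degree-$0$ placement fixed in the paper, there is no sign: $\chi(\mathsf K^\bullet_{\mathbf z})=[R/\mathfrak m]$, which has length $1$. This gives the final assertion.
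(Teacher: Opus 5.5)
Your proposal is correct and follows the paper's proof essentially step for step. Both use the Euler characteristic map $\chi$, with inverse given by finite free resolutions (regularity of $R$ plus the resolution theorem), then d\'evissage via composition series to get $G_0^{\mathfrak m}(R)\cong\ZBbb$ by length, and finally the Koszul complex as a free resolution of $R/\mathfrak m$; your point that quasi-isomorphic complexes must be identified in $K^0_{\mathfrak m}(R)$ for $\chi$ to be injective is the same step the paper delegates to the cited resolution theorem.
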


\begin{proof}
Since $R$ is regular local, every finite-length $R$-module has a finite free resolution. Thus every finite-length module defines a class in $K^0_{\mathfrak m}(R)$. Conversely, if $\mathsf C^\bullet$ is a bounded complex of finite free $R$-modules exact on $\Spec R\setminus\{\mathfrak m\}$, then each $H^i(\mathsf C^\bullet)$ has finite length. The Euler characteristic map
$$
[\mathsf C^\bullet]\mapsto \sum_i(-1)^i[H^i(\mathsf C^\bullet)]
$$
defines a homomorphism
$$
K^0_{\mathfrak m}(R)\to G^{\mathfrak m}_0(R).
$$
The inverse sends a finite-length module to the class of any finite free resolution. These two maps are inverse by the resolution theorem.

The category of finite-length $R$-modules has the unique simple object $R/\mathfrak m$. Hence, by dévissage, we know
$$
G^{\mathfrak m}_0(R)\cong \ZBbb,\qquad [M]\mapsto \ell_R(M),
$$
and sends $[R/\mathfrak m]$ to $1$. Finally, since $\mathsf K^\bullet_{\mathbf z}$ is a finite free resolution of $R/\mathfrak m$, its class maps to $[R/\mathfrak m]$, hence to $1$.
\end{proof}

The topological and algebraic constructions are compatible in the following sense.

\begin{prop}\label{Prop-Top-Alg-Comparison}
With the normalizations above, the comparison maps
$$
\alpha:K^0_{\mathfrak m}(R)\longrightarrow \widetilde K^0(S^6),
\qquad
\chi:K^0_{\mathfrak m}(R)\longrightarrow G_0^{\mathfrak m}(R)
$$
are isomorphisms. Moreover,
$$
\alpha\bigl([\mathsf K^\bullet_{\mathbf z}]\bigr)=\beta,
\qquad
\chi\bigl([\mathsf K^\bullet_{\mathbf z}]\bigr)=[R/\mathfrak m].
$$
\end{prop}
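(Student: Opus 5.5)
The plan is to compute both maps on the generator $[\mathsf K^\bullet_{\mathbf z}]$ and then deduce that they are isomorphisms from the fact that all three groups are infinite cyclic.

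For $\chi$, the statement is essentially Proposition \ref{Prop-Algebraic-K-Groups}. The Euler characteristic map sends a complex to $\sum_i(-1)^i[H^i]$. The Koszul complex $\mathsf K^\bullet_{\mathbf z}$ is exact except in degree $0$, where its cohomology is $R/\mathfrak m$, so $\chi([\mathsf K^\bullet_{\mathbf z}])=[R/\mathfrak m]$. The resolution theorem, as recalled in that proof, shows that $\chi$ is an isomorphism. Composing with dévissage, $G_0^{\mathfrak m}(R)\cong\ZBbb$ via length, identifies $K^0_{\mathfrak m}(R)\cong\ZBbb$ with generator $[\mathsf K^\bullet_{\mathbf z}]$.

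For $\alpha$, I first need to define the map carefully. A bounded complex of finite free $R$-modules exact off $\mathfrak m$ is given by matrices of germs of holomorphic functions. These converge on some small ball $B(r)$, and the resulting complex of trivial bundles is exact on $\overline{B(r)}\setminus\{0\}$: the support of its cohomology sheaves is analytic, and its germ at $0$ is $\{0\}$. The construction $\beta_{\mathsf C^\bullet}$ then produces a class in $K^0(\overline{B(r)},S^5_r)\cong\widetilde K^0(S^6)$. I would check three things.

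1. The class is independent of $r$ and of the representative. Shrinking $r$ is a homotopy of boundary isomorphisms, by radial rescaling.
2. The class is additive in short exact sequences and kills acyclic complexes. An exact complex of free modules splits, so $\mathcal D^+$ extends to an isomorphism over the whole ball and the relative class vanishes.
3. The class depends only on the quasi-isomorphism type. A quasi-isomorphism has an exact cone, so this follows from the second point.

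Together these show that $\alpha$ is a well-defined homomorphism on the Grothendieck group. By definition of the Bott class, $\alpha([\mathsf K^\bullet_{\mathbf z}])=\beta_{\mathsf K^\bullet_{\mathbf z}}=\beta$.

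To conclude, $K^0_{\mathfrak m}(R)\cong\ZBbb$ is generated by $[\mathsf K^\bullet_{\mathbf z}]$, and $\widetilde K^0(S^6)\cong\ZBbb$ is generated by $\beta$ (Proposition \ref{BottGenerator}). So $\alpha$ sends a generator to a generator and is therefore an isomorphism.

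I expect the main obstacle to be the well-definedness of $\alpha$ on the Grothendieck group with supports. In particular, one has to show that the Grothendieck group relations, such as additivity for short exact sequences of complexes and the vanishing of acyclic complexes, are respected by the topological construction. The point requiring care is that everything works for germs, i.e.\ after uniform shrinking. I would handle this by the homotopy argument above: deform the differential linearly along a splitting, and observe that a uniform radius $r$ exists for any finite diagram of complexes.
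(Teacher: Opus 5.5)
Your proposal is correct and follows the paper's route. The isomorphism $\chi$ comes from the resolution theorem and sends $[\mathsf K^\bullet_{\mathbf z}]$ to $[R/\mathfrak m]$, while $\alpha$ is an isomorphism because it carries the generator $[\mathsf K^\bullet_{\mathbf z}]$ of $K^0_{\mathfrak m}(R)\cong\ZBbb$ to the generator $\beta$ of $\widetilde K^0(S^6)$. Your extra checks that $\alpha$ is well defined on the Grothendieck group fill in a step the paper leaves implicit when it defines $\alpha$ by choosing holomorphic representatives: independence of $r$, additivity via degreewise splittings, vanishing on acyclic complexes, and quasi-isomorphism invariance via cones.
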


\begin{proof}
Let
$$
\alpha:K^0_{\mathfrak m}(R)\longrightarrow K^0(\overline{B(r)},S_r^5)
\cong \widetilde K^0(S^6)
$$
be defined as follows. Represent a class by a bounded complex $\mathsf C^\bullet$ of finite free $R$-modules, exact on $\Spec R\setminus\{\mathfrak m\}$. After decreasing $r$ if necessary, choose holomorphic representatives of the differentials on $B(r)$. The resulting complex is exact on $S_r^5$, so it defines the relative class $\beta_{\mathsf C^\bullet}$. By construction,
$$
\alpha\bigl([\mathsf K^\bullet_{\mathbf z}]\bigr)=\beta.
$$
By Propositions \ref{BottGenerator} and \ref{Prop-Algebraic-K-Groups}, both sides are generated by these classes. Hence $\alpha$ is an isomorphism.

On the other hand, the Euler characteristic map
$$
\chi([\mathsf C^\bullet])=\sum_i(-1)^i[H^i(\mathsf C^\bullet)]
$$
identifies $K^0_{\mathfrak m}(R)$ with $G_0^{\mathfrak m}(R)$. Under this identification, $\mathsf K^\bullet_{\mathbf z}$ maps to its only nonzero cohomology module, namely $R/\mathfrak m$. This proves the claim.
\end{proof}

\subsection{Constructions arising from rank-two torsion-free sheaves}

Let $\Fcal$ be a rank-two torsion-free sheaf on $B$ with an isolated point singularity at $0$. Choose a local resolution
$$
0\to \Ocal_B^a
\xrightarrow{\Phi_1}
\Ocal_B^b
\xrightarrow{\Phi_2}
\Ocal_B^{b-a+2}
\to \Fcal\to 0.
$$
After shrinking $B$, fix a trivialization
$$
\sigma:\det(\Fcal)\xrightarrow{\sim}\Ocal_B.
$$
The determinant pairing for the rank-two reflexive sheaf $\Fcal^{**}$, together with $\sigma$, gives an isomorphism
$$
\theta:\Fcal^{**}\xrightarrow{\sim}(\Fcal^{**})^*\cong \Fcal^*.
$$
Composing the natural map $\Fcal\to \Fcal^{**}$ with $\theta$, we obtain a morphism
$$
j:\Fcal\to \Fcal^*.
$$
This morphism is an isomorphism on $B\setminus\{0\}$.

We use $j$ to construct a self-dual complex. Let
$$
\Psi:\Ocal_B^{b-a+2}\to (\Ocal_B^{b-a+2})^*
$$
be the composition
$$
\Ocal_B^{b-a+2}\to \Fcal
\xrightarrow{j}
\Fcal^*
\hookrightarrow
(\Ocal_B^{b-a+2})^*.
$$
Then we obtain the finite complex
$$
\mathsf C^\bullet_{\Fcal}:\quad
0\to \Ocal_B^a
\xrightarrow{\Phi_1}
\Ocal_B^b
\xrightarrow{\Phi_2}
\Ocal_B^{b-a+2}
\xrightarrow{\Psi}
(\Ocal_B^{b-a+2})^*
\xrightarrow{\Phi_2^*}
(\Ocal_B^b)^*
\xrightarrow{\Phi_1^*}
(\Ocal_B^a)^*
\to 0,
$$
where $\Ocal_B^a$ is placed in degree $0$.

\begin{lem}\label{Lemma-Exact-Away-Origin}
The complex $\mathsf C^\bullet_{\Fcal}$ is exact on $B\setminus\{0\}$.
\end{lem}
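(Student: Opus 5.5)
The plan is to argue pointwise on $B\setminus\{0\}$: exactness of a complex of sheaves is a stalkwise condition, so it suffices to show that $\mathsf C^\bullet_{\Fcal}$ is exact near each point $y\neq 0$. Near such a $y$ the sheaf $\Fcal$ is locally free of rank two and $j:\Fcal\to\Fcal^*$ is an isomorphism. The complex then splits into a left half, a right half, and the map $\Psi$ in the middle joining them, and we treat each piece in turn.

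\emph{Left half.} The sequence
$$
0\to \Ocal_B^a\xrightarrow{\Phi_1}\Ocal_B^b\xrightarrow{\Phi_2}\Ocal_B^{b-a+2}\to\Fcal\to 0
$$
is a resolution, hence exact everywhere. This gives exactness of $\mathsf C^\bullet_{\Fcal}$ at degrees $0$ and $1$. It also gives $\ker\Psi\supseteq\operatorname{Im}\Phi_2$ once we know $\Psi\circ\Phi_2=0$, which holds because $\Psi$ factors through $\Ocal_B^{b-a+2}\to\Fcal$.

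\emph{Right half.} Near $y$, $\Fcal$ is locally free, so the resolution is a resolution by free modules of a free module and therefore splits locally. Its dual
$$
0\to\Fcal^*\to(\Ocal_B^{b-a+2})^*\xrightarrow{\Phi_2^*}(\Ocal_B^b)^*\xrightarrow{\Phi_1^*}(\Ocal_B^a)^*\to 0
$$
is again exact near $y$; equivalently, $\Ecal xt^i(\Fcal,\Ocal_B)_y=0$ for $i\geq1$. This gives exactness at the last three spots, namely at $(\Ocal^b)^*$, at $(\Ocal^a)^*$ (surjectivity), and $\ker\Phi_2^*=\Fcal^*$ inside $(\Ocal^{b-a+2})^*$.

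\emph{Middle.} Write $\Psi=\iota\circ j\circ\pi$, where $\pi:\Ocal^{b-a+2}\to\Fcal$ is surjective and $\iota:\Fcal^*\hookrightarrow(\Ocal^{b-a+2})^*$ is injective. Then
\begin{itemize}
\item[(a)] $\ker\Psi=\ker(j\circ\pi)=\ker\pi=\operatorname{Im}\Phi_2$ near $y$, since $j$ is injective there;
\item[(b)] $\operatorname{Im}\Psi=\iota(j(\Fcal))=\iota(\Fcal^*)=\ker\Phi_2^*$, since $j$ is surjective there.
\end{itemize}
This also confirms that the composites $\Phi_2^*\circ\Psi$ and $\Psi\circ\Phi_2$ vanish, so $\mathsf C^\bullet_{\Fcal}$ really is a complex. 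Note that these compositions vanish globally as well, by the same factorizations.

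The only step needing any care is the right half: it uses local freeness of $\Fcal$ away from $0$, which is exactly the isolated-singularity hypothesis, together with the fact that $j$ is an isomorphism away from $0$. The latter holds because $\Fcal\to\Fcal^{**}$ is an isomorphism where $\Fcal$ is locally free, and $\theta$ is an isomorphism everywhere. Everything else is formal.
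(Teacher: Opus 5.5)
Your proposal is correct and follows the same route as the paper. The paper's proof is a single sentence: away from $0$, the complex $\mathsf C^\bullet_{\Fcal}$ is a locally free resolution of $\Fcal$ spliced with its dual resolution along the isomorphism $j$. Your degree-by-degree check (the left half, the dual of the locally split resolution, and the factorization $\Psi=\iota\circ j\circ\pi$ in the middle) spells out exactly that argument.
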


\begin{proof}
On $B\setminus\{0\}$, the sheaf $\Fcal$ is locally free and $j:\Fcal\to \Fcal^*$ is an isomorphism. Hence $\mathsf C^\bullet_{\Fcal}$ is obtained by splicing a locally free resolution of $\Fcal$ with its dual resolution along $j$, and is therefore exact.
\end{proof}

Equip the trivial bundles in $\mathsf C^\bullet_{\Fcal}$ with their standard flat Hermitian metrics. Let $\partial_{\Fcal}$ be the total differential and set
$$
E^{+}:=\Ocal_B^a\oplus \Ocal_B^{b-a+2}\oplus(\Ocal_B^b)^*,
\qquad
E^{-}:=\Ocal_B^b\oplus(\Ocal_B^{b-a+2})^*\oplus(\Ocal_B^a)^*.
$$
On $B\setminus\{0\}$, the operator
$$
\mathcal D_{\Fcal}:=\partial_{\Fcal}+\partial_{\Fcal}^*:E^+\to E^-
$$
is an isomorphism. Thus $\mathcal D_{\Fcal}|_{S^5}$ defines a clutching map, and hence a vector bundle $V_{\Fcal}$ of rank $2b+2$ over $S^6$. We define
$$
\beta_{\Fcal}:=[V_{\Fcal}]-[\underline{\CBbb}^{\,2b+2}]
\in \widetilde K^0(S^6).
$$

\begin{lem}\label{Lemma-Beta-Resolution-Independent}
The class $\beta_{\Fcal}$ is independent of the choice of local resolution. In particular, it is an intrinsic topological invariant of the germ of $\Fcal$ at $0$.
\end{lem}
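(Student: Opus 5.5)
The invariance of $\beta_{\Fcal}$ will be deduced from the algebraic picture of Propositions \ref{Prop-Algebraic-K-Groups} and \ref{Prop-Top-Alg-Comparison}. By Lemma \ref{Lemma-Exact-Away-Origin}, $\mathsf C^\bullet_{\Fcal}$ is a bounded complex of finite free modules exact off the origin. The first step is to check that $\beta_{\Fcal}$ is just the class $\beta_{\mathsf C^\bullet_{\Fcal}}$ from the relative $K$-theory subsection. Indeed, the clutching construction with $E^\pm$ and $\mathcal D_{\Fcal}^+|_{S^5}$ is exactly the recipe defining $\beta_{\mathsf C^\bullet}$ under $K^0(\overline{B(r)},S^5_r)\cong\widetilde K^0(S^6)$, and subtracting the trivial bundle of rank $2b+2$ is the reduced normalization. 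Hence
$$
\beta_{\Fcal}=\alpha\bigl([\mathsf C^\bullet_{\Fcal}]\bigr).
$$
Because $\alpha$ and $\chi$ are isomorphisms, this class is determined by
$$
\chi\bigl([\mathsf C^\bullet_{\Fcal}]\bigr)=\sum_i(-1)^i\,\ell\bigl(H^i(\mathsf C^\bullet_{\Fcal})\bigr).
$$
So it suffices to show that this Euler characteristic does not depend on the resolution, nor on the trivialization $\sigma$.

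The second step computes the cohomology of $\mathsf C^\bullet_{\Fcal}$ intrinsically.
\begin{itemize}
\item In degrees $0,1$ it vanishes, because $0\to\Ocal^a\to\Ocal^b\to\Ocal^{b-a+2}\to\Fcal\to0$ is a resolution.
\item In degree $2$, the cohomology is $\Ker(\Psi)/\operatorname{Im}\Phi_2$. Since $\Fcal^*\hookrightarrow(\Ocal^{b-a+2})^*$ is injective and $j$ is injective ($\Fcal$ is torsion-free and $\theta$ is an isomorphism), we have $\Ker\Psi=\operatorname{Im}\Phi_2$, so $H^2=0$.
\item In degree $3$, $\Ker(\Phi_2^*)=\Fcal^*$ inside $(\Ocal^{b-a+2})^*$ by left exactness of $\Hcal om$, and $\operatorname{Im}\Psi=j(\Fcal)$. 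Hence $H^3\cong\Fcal^*/j(\Fcal)\cong\Fcal^{**}/\Fcal$, using $\theta$.
\item In degrees $4,5$, the dual part computes $\Ecal xt^1(\Fcal,\Ocal_B)$ and $\Ecal xt^2(\Fcal,\Ocal_B)$. This is because $\Ker\Phi_1^*/\operatorname{Im}\Phi_2^*$ and $\operatorname{Coker}\Phi_1^*$ are the Ext sheaves of $\Fcal$ computed from the chosen resolution.
\end{itemize}
All these cohomology sheaves are canonically attached to $\Fcal$ and have finite length at $0$. In particular, the alternating sum equals $\ell(\Fcal^{**}/\Fcal,0)-\ell(\Ecal xt^1(\Fcal,\Ocal),0)+\ell(\Ecal xt^2(\Fcal,\Ocal),0)$, up to the global sign fixed by the degree convention. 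This is visibly independent of all choices. (By Lemma \ref{Lemma-I1} it is $\mp c_3(\Fcal,0)$, which anticipates the main theorem of the section, but that identification is not needed for the present lemma.)

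The main obstacle is making the first step rigorous. The comparison map $\alpha$ was set up for complexes of free $R$-modules over the local ring, whereas $\beta_{\Fcal}$ is defined by a concrete clutching on a sphere of some radius. I need that shrinking $r$ does not change the class, since exactness holds on a punctured neighborhood and the spheres $S^5_r$ are homotopic through spheres on which $\mathcal D_{\Fcal}$ stays invertible. I also need that the class depends only on the germ. As a fallback that avoids $\alpha$ altogether, I would compare two resolutions directly. Any two are related, up to isomorphism, by adding trivial complexes $\Ocal\xrightarrow{\id}\Ocal$ in adjacent degrees, via the comparison of free resolutions and the usual stabilization. Such a summand, together with its dual, contributes a term with invertible $\mathcal D$ everywhere, so it adds a trivial bundle and leaves the reduced class unchanged. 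Changing $\sigma$ multiplies $\Psi$ by a nowhere-vanishing function, which is homotopic to a constant through invertible maps on the contractible ball. Either route yields the lemma.
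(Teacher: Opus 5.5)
Your proposal is correct. Your fallback is the paper's own argument: the paper notes that any local resolution is the minimal one plus trivial complexes $\Ocal\xrightarrow{\id}\Ocal$. Such summands change $\mathsf C^\bullet_{\Fcal}$ only by pieces that are exact everywhere on $B$, so the relative class is unchanged.

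Your main route is genuinely different. You identify $\beta_{\Fcal}$ with $\alpha([\mathsf C^\bullet_{\Fcal}])$ and use that $\alpha$ and $\chi$ are isomorphisms (Proposition \ref{Prop-Top-Alg-Comparison}). This reduces the lemma to the fact that the cohomology sheaves of $\mathsf C^\bullet_{\Fcal}$ are $\Fcal^{**}/\Fcal$, $\Ecal xt^1(\Fcal,\Ocal_B)$ and $\Ecal xt^2(\Fcal,\Ocal_B)$, which are intrinsic. Your cohomology computation is right; with $\Ocal_B^a$ in degree $0$ the alternating sum is $-\ell(\Fcal^{**}/\Fcal,0)+\ell(\Ecal xt^1(\Fcal,\Ocal_B),0)-\ell(\Ecal xt^2(\Fcal,\Ocal_B),0)$. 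In effect you move Lemma \ref{Lemma-Homology-Class} and the proof of Theorem \ref{Thm-K charge interpretation} ahead of the present lemma.

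What this buys is more information: the class is pinned down by an Euler characteristic, and independence of $\sigma$ is automatic. The cost is that the real input is the well-definedness of $\alpha$ on $K^0_{\mathfrak m}(R)$, i.e.\ that the clutching class of a complex depends only on its algebraic class. The present lemma is a special case of that fact, and the paper's proof of Proposition \ref{Prop-Top-Alg-Comparison} assumes it rather than proving it. So the stabilization argument is the more elementary and self-contained one.

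In the stabilization argument, state explicitly that the isomorphism with the minimal resolution is taken over $\Fcal$. Then a trivial summand of the degree-$2$ term maps to zero in $\Fcal$, so $\Psi$ splits as $\Psi_{\mathrm{min}}\oplus 0$. The new complex is then literally $\mathsf C^\bullet_{\mathrm{min}}$ plus the trivial piece and its dual. Your treatment of $\sigma$ covers a point the paper leaves implicit. Homotoping the unit to $1$ works, or you can rescale the dual half of the complex by the unit to get an isomorphic complex.
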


\begin{proof}
The complex $\mathsf C^\bullet_{\Fcal}$ defines a class in the relative $K$-group with support at $0$. Replacing the chosen local resolution by another one changes the resulting perfect complex by direct sum of free factors; see \cite[Theorem~20.2]{Eisenbud:13}. Hence it does not change its class in the relative $K$-group. Under the clutching identification with $\widetilde K^0(S^6)$, the corresponding class is unchanged.
\end{proof}

The same complex also defines an algebraic relative $K$-class
$$
[\mathsf C^\bullet_{\Fcal}]\in K^0_{\mathfrak m}(R),
$$
where $R=\Ocal_{B,0}\cong \Ocal_{\CBbb^3,0}$ and $\mathfrak m=(z_1,z_2,z_3)$. Since $\mathsf C^\bullet_{\Fcal}$ is exact away from $0$, its cohomology sheaves have finite length. We write
$$
[H^\bullet(\mathsf C^\bullet_{\Fcal})]
:=
\sum_i(-1)^i[H^i(\mathsf C^\bullet_{\Fcal})]
\in G_0^{\mathfrak m}(R).
$$

\begin{lem}\label{Lemma-Homology-Class}
In $G_0^{\mathfrak m}(R)$,
$$
[H^\bullet(\mathsf C^\bullet_{\Fcal})]
=
[\Ecal xt^1(\Fcal,\Ocal_B)]-[\Ecal xt^2(\Fcal,\Ocal_B)]-[\Fcal^{**}/\Fcal].
$$
Equivalently,
$$
[H^\bullet(\mathsf C^\bullet_{\Fcal})]
=
[\Ecal xt^1(\Fcal,\Ocal_B)]-2[\Fcal^{**}/\Fcal].
$$
\end{lem}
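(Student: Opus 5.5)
The plan is to compute the cohomology sheaves of $\mathsf C^\bullet_{\Fcal}$ degree by degree, with $\Ocal_B^a$ in degree $0$, so that $(\Ocal_B^a)^*$ sits in degree $5$. Write $n=b-a+2$ and $\pi:\Ocal_B^n\to\Fcal$ for the surjection.

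\emph{Degrees $0$, $1$, $2$.} Since $0\to\Ocal_B^a\to\Ocal_B^b\to\Ocal_B^n\to\Fcal\to0$ is a resolution, $H^0=H^1=0$. For $H^2=\Ker\Psi/\operatorname{Im}\Phi_2$, factor $\Psi$ as
$$
\Ocal_B^n\xrightarrow{\pi}\Fcal\xrightarrow{j}\Fcal^*\hookrightarrow(\Ocal_B^n)^*.
$$
The last map is injective because $\Hcal om(-,\Ocal_B)$ is left exact applied to the surjection $\pi$. The map $j$ is injective because $\Fcal\to\Fcal^{**}$ is injective ($\Fcal$ is torsion-free) and $\theta$ is an isomorphism. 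Hence $\Ker\Psi=\Ker\pi=\operatorname{Im}\Phi_2$, and $H^2=0$.

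\emph{Degrees $3$, $4$, $5$.} Dualizing the resolution, left exactness gives
$$
0\to\Fcal^*\to(\Ocal_B^n)^*\xrightarrow{\Phi_2^*}(\Ocal_B^b)^*,
$$
so $\Ker\Phi_2^*=\Fcal^*$. Since $\pi$ is surjective, $\operatorname{Im}\Psi=j(\Fcal)\subset\Fcal^*$. Therefore
$$
H^3=\Fcal^*/j(\Fcal)\cong\Fcal^{**}/\Fcal,
$$
using $\theta^{-1}$, which carries $j(\Fcal)$ onto the image of $\Fcal$ in $\Fcal^{**}$. In degrees $4$ and $5$ the complex is just the dual of the free resolution of $\Fcal$, so its cohomology computes local Ext:
$$
H^4=\Ker\Phi_1^*/\operatorname{Im}\Phi_2^*\cong\Ecal xt^1(\Fcal,\Ocal_B),\qquad H^5=\operatorname{Coker}\Phi_1^*\cong\Ecal xt^2(\Fcal,\Ocal_B).
$$
All of these are supported at $0$ by Lemma \ref{Lemma-Exact-Away-Origin}.

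\emph{Assembling.} Taking the alternating sum with the signs $(-1)^3,(-1)^4,(-1)^5$ gives
$$
[H^\bullet(\mathsf C^\bullet_{\Fcal})]=-[\Fcal^{**}/\Fcal]+[\Ecal xt^1(\Fcal,\Ocal_B)]-[\Ecal xt^2(\Fcal,\Ocal_B)],
$$
which is the first identity. For the second, $G_0^{\mathfrak m}(R)\cong\ZBbb$ via length (Proposition \ref{Prop-Algebraic-K-Groups}), so it suffices that $\ell(\Ecal xt^2(\Fcal,\Ocal_B),0)=\ell(\Fcal^{**}/\Fcal,0)$, which is Lemma \ref{Lemma-I1}.

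The only delicate step is the identification of $H^3$. It relies on the injectivity of $j$ and on $\theta$ matching $j(\Fcal)$ with the image of $\Fcal$ in $\Fcal^{**}$; the remaining steps are routine.
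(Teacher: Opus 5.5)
Your proof is correct and follows the paper's argument: you identify $H^3\cong\Fcal^{**}/\Fcal$, $H^4\cong\Ecal xt^1(\Fcal,\Ocal_B)$ and $H^5\cong\Ecal xt^2(\Fcal,\Ocal_B)$, take the alternating sum, and invoke Lemma \ref{Lemma-I1} for the second identity. The paper compresses the cohomology computation into ``by construction,'' and you supply the degree-by-degree verification: injectivity of $j$, $\Ker\Psi=\operatorname{Im}\Phi_2$, and $\Ker\Phi_2^*=\Fcal^*$.
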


\begin{proof}
Let $\Qcal=\Fcal^{**}/\Fcal$. By construction, the only nonzero cohomology sheaves of $\mathsf C^\bullet_{\Fcal}$ are
$$
H^3(\mathsf C^\bullet_{\Fcal})\cong \Qcal,\qquad
H^4(\mathsf C^\bullet_{\Fcal})\cong \Ecal xt^1(\Fcal,\Ocal_B),\qquad
H^5(\mathsf C^\bullet_{\Fcal})\cong \Ecal xt^2(\Fcal,\Ocal_B).
$$
Therefore
$$
[H^\bullet(\mathsf C^\bullet_{\Fcal})]
=
-[\Qcal]+[\Ecal xt^1(\Fcal,\Ocal_B)]-[\Ecal xt^2(\Fcal,\Ocal_B)].
$$
Since $\Fcal^{**}$ is reflexive on the smooth threefold $B$, $\Ecal xt^i(\Fcal^{**},\Ocal_B)=0$ for $i\geq 2$. Hence by Lemma \ref{Lemma-I1},
$$
[H^\bullet(\mathsf C^\bullet_{\Fcal})]
=
[\Ecal xt^1(\Fcal,\Ocal_B)]-2[\Qcal],
$$
as claimed.
\end{proof}

We now obtain the desired $K$-theoretic interpretation of the local third Chern class.

\begin{thm}\label{Thm-K charge interpretation}
Suppose $\Fcal$ is a rank-two torsion-free sheaf on $B$ with an isolated point singularity at $0$. Then
$$
\beta_{\Fcal}=c_3(\Fcal,0)\beta.
$$
In particular, $\kappa_{\Fcal}=c_3(\Fcal,0)$.
\end{thm}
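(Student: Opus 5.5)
The plan is to route the statement through the algebraic relative $K$-group $K^0_{\mathfrak m}(R)$ and the two comparison isomorphisms of Proposition \ref{Prop-Top-Alg-Comparison}. The self-dual complex $\mathsf C^\bullet_{\Fcal}$ is a bounded complex of finite free $R$-modules. By Lemma \ref{Lemma-Exact-Away-Origin} it is exact on $\Spec R\setminus\{\mathfrak m\}$, so it defines a class $[\mathsf C^\bullet_{\Fcal}]\in K^0_{\mathfrak m}(R)$. I will compute the image of this class in two ways.

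\emph{Algebraic side.} By definition, the Euler characteristic map $\chi$ sends $[\mathsf C^\bullet_{\Fcal}]$ to $[H^\bullet(\mathsf C^\bullet_{\Fcal})]$. Lemma \ref{Lemma-Homology-Class} computes this as $[\Ecal xt^1(\Fcal,\Ocal_B)]-2[\Fcal^{**}/\Fcal]$. Under the length isomorphism $G_0^{\mathfrak m}(R)\cong\ZBbb$ of Proposition \ref{Prop-Algebraic-K-Groups}, this class goes to
$$
\ell(\Ecal xt^1(\Fcal,\Ocal_B),0)-2\ell(\Fcal^{**}/\Fcal,0)=c_3(\Fcal,0).
$$
Since $\chi([\mathsf K^\bullet_{\mathbf z}])=[R/\mathfrak m]\mapsto 1$ and $\chi$ is an isomorphism, we get
$$
[\mathsf C^\bullet_{\Fcal}]=c_3(\Fcal,0)\,[\mathsf K^\bullet_{\mathbf z}]\quad\text{in } K^0_{\mathfrak m}(R).
$$

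\emph{Topological side.} Applying the homomorphism $\alpha$ and using $\alpha([\mathsf K^\bullet_{\mathbf z}])=\beta$ gives
$$
\alpha([\mathsf C^\bullet_{\Fcal}])=c_3(\Fcal,0)\beta.
$$
It then remains to check that $\alpha([\mathsf C^\bullet_{\Fcal}])=\beta_{\Fcal}$. Both are built from the same data: the even and odd parts $E^\pm$ of $\mathsf C^\bullet_{\Fcal}$, glued along $\mathcal D_{\Fcal}^+|_{S^5_r}$. So the relative class $\beta_{\mathsf C^\bullet_{\Fcal}}$ of the preceding subsection corresponds under $K^0(\overline{B(r)},S^5_r)\cong\widetilde K^0(S^6)$ to the clutched bundle $V_{\Fcal}$ minus the trivial bundle of rank $2b+2$.

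\emph{Main obstacle.} I expect the hardest step to be making this last identification precise. Two points need care, and I would address them as follows.
\begin{itemize}
\item \textbf{Degree conventions.} The Koszul complex was placed in degrees $-3,\dots,0$, with $R$ in degree $0$ (even), while $\mathsf C^\bullet_{\Fcal}$ starts in degree $0$. I must check that the choice of which summands count as ``even'' agrees with the convention used in $\beta_{\mathsf K^\bullet_{\mathbf z}}$, so that no global sign appears. I would verify this directly on $\mathsf K^\bullet_{\mathbf z}$: its even part is $\Ocal\oplus\bigwedge^2$, matching the parity assignment $E^+$ uses. Shifting a complex by an even number of degrees does not change its relative class.
\item \textbf{Well-definedness of $\alpha$.} I would also note that $\alpha$ is well defined on the complex itself, not only on a chosen representative. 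This follows because homotopic boundary isomorphisms give equal classes, and because Lemma \ref{Lemma-Beta-Resolution-Independent} shows $\beta_{\Fcal}$ does not depend on the resolution.
\end{itemize}

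With these checks in place, $\beta_{\Fcal}=c_3(\Fcal,0)\beta$, and by the definition of the charge, $\kappa_{\Fcal}=c_3(\Fcal,0)$.
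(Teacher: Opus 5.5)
Your proposal is correct and matches the paper's proof. You compute $\chi([\mathsf C^\bullet_{\Fcal}])$ by Lemma \ref{Lemma-Homology-Class}, use that $\chi$ is an isomorphism to get $[\mathsf C^\bullet_{\Fcal}]=c_3(\Fcal,0)[\mathsf K^\bullet_{\mathbf z}]$ in $K^0_{\mathfrak m}(R)$, and apply $\alpha$ with $\alpha([\mathsf K^\bullet_{\mathbf z}])=\beta$; the paper takes $\alpha([\mathsf C^\bullet_{\Fcal}])=\beta_{\Fcal}$ and the parity convention (even degrees give $E^+$, matching $\Ocal\oplus\bigwedge^2$ for the Koszul complex and the sign in $\chi$) for granted, and your checks of both are correct.
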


\begin{proof}
By Lemma \ref{Lemma-Homology-Class} and the definition of the local third Chern class,
$$
\chi([\mathsf C^\bullet_{\Fcal}])
=
[H^\bullet(\mathsf C^\bullet_{\Fcal})]
=
c_3(\Fcal,0)[R/\mathfrak m]
=
c_3(\Fcal,0)\chi([\mathsf K^\bullet_{\mathbf z}])
$$
in $G_0^{\mathfrak m}(R)$. Since $\chi$ is an isomorphism,
$$
[\mathsf C^\bullet_{\Fcal}]
=
c_3(\Fcal,0)[\mathsf K^\bullet_{\mathbf z}]
$$
in $K^0_{\mathfrak m}(R)$. Applying the comparison map $\alpha$ gives
$$
\beta_{\Fcal}
=
\alpha([\mathsf C^\bullet_{\Fcal}])
=
c_3(\Fcal,0)\alpha([\mathsf K^\bullet_{\mathbf z}])
=
c_3(\Fcal,0)\beta.
$$
\end{proof}

\begin{cor}
Let $\Fcal$ be a rank-two torsion-free sheaf on a ball $B\subset \CBbb^3$ with
$$
\sing(\Fcal)=\{x_1,\ldots,x_N\}.
$$
Then the boundary class defined by the self-dual complex satisfies
$$
\beta(\Fcal)
=
\left(\sum_{i=1}^N c_3(\Fcal,x_i)\right)\beta.
$$
\end{cor}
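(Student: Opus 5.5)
The plan is to reduce the global boundary statement to the single-point Theorem \ref{Thm-K charge interpretation} by additivity of relative $K$-classes. First we must say what $\beta(\Fcal)$ means when there are several singular points. Over a slightly shrunken ball containing $\sing(\Fcal)$ in its interior (say $B(r)$ with all $x_i\in B(1/2)\subset B(r)$), the self-dual complex $\mathsf C^\bullet_{\Fcal}$ is built exactly as before. Globally on $B(r)$ it may not be a complex of \emph{free} modules, so we take a locally free resolution of $\Fcal$, which exists after shrinking since $B$ is Stein, and a global trivialization of $\det\Fcal$, using $H^2(B,\ZBbb)=0$ and Cartan B. By the proof of Lemma \ref{Lemma-Exact-Away-Origin} this complex is exact on $B(r)\setminus\sing(\Fcal)$, hence on $S_r^5$, and therefore defines
$$
\beta(\Fcal)\in K^0(\overline{B(r)},S_r^5)\cong\widetilde K^0(S^6).
$$
Independence of the choices follows as in Lemma \ref{Lemma-Beta-Resolution-Independent}.

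Next we localize. Choose disjoint small balls $B_i$ around the $x_i$. The complex is exact on the compact set $\overline{B(r)}\setminus\bigcup_i B_i$, so by excision
$$
K^0(\overline{B(r)},\overline{B(r)}\setminus\textstyle\bigcup B_i)\cong\bigoplus_i K^0(\overline{B_i},\partial B_i).
$$
The class $\beta(\Fcal)$ is the image of $\bigoplus_i\beta_{\Fcal|_{B_i}}$ under the forgetful map to $K^0(\overline{B(r)},S_r^5)$. Each inclusion $(\overline{B_i},\partial B_i)\to(\overline{B(r)},\overline{B(r)}\setminus B_i)\leftarrow(\overline{B(r)},S_r^5)$ identifies the respective copies of $\widetilde K^0(S^6)$, since a small ball in the big ball is collapse-equivalent. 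It also preserves the Bott class: the Koszul complex of $\mathbf z-x_i$ is homotopic to that of $\mathbf z$ through the complexes for $\mathbf z-sx_i$, $s\in[0,1]$, which stay exact near $S_r^5$. Hence
$$
\beta(\Fcal)=\sum_i\beta_{\Fcal|_{B_i}}.
$$

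The restriction of the global complex to $B_i$ is a complex built from some local resolution and the induced local trivialization of the determinant. So, translating $x_i$ to the origin, Theorem \ref{Thm-K charge interpretation} gives $\beta_{\Fcal|_{B_i}}=c_3(\Fcal,x_i)\beta$. Summing yields the corollary.

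The main obstacle is the excision and normalization step, namely checking that the Bott class transported from a ball centered at $x_i$ agrees with $\beta$ and not with $-\beta$. The translation homotopy of Koszul complexes handles this, since orientation is preserved by the holomorphic translation. The change of determinant trivialization is harmless: it multiplies $j$ by a unit, which is homotopic to $1$ on a ball.
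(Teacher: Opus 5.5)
Your proposal is correct and follows the route the paper leaves implicit. The paper states this corollary without proof, as an immediate consequence of Theorem \ref{Thm-K charge interpretation} by additivity over the singular points. Your argument supplies exactly that additivity step: you build the global self-dual complex, split its class by excision into the local classes at the $x_i$, use the translation homotopy of Koszul complexes to check that each local Bott class maps to $\beta$, and then apply the single-point theorem.
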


\begin{cor}
Let $\Fcal$ be a rank-two torsion-free sheaf with finitely many singularities. Suppose
$$
\beta(\Fcal)=k\beta.
$$
Then
$$
k\equiv \delta(\Fcal) \pmod{2}.
$$
\end{cor}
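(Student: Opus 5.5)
The plan is to combine the preceding corollary with Theorem \ref{Theorem-2}. By the preceding corollary, the boundary class of the self-dual complex satisfies
$$
\beta(\Fcal)=\Bigl(\sum_{i=1}^N c_3(\Fcal,x_i)\Bigr)\beta=c_3(\Fcal)\beta .
$$
Suppose also $\beta(\Fcal)=k\beta$. Then $(k-c_3(\Fcal))\beta=0$ in $\widetilde K^0(S^6)$. By Proposition \ref{BottGenerator}, $\widetilde K^0(S^6)\cong\ZBbb$ is generated by $\beta$, so $\beta$ is torsion-free. Hence $k=c_3(\Fcal)$ exactly as integers. Once this is known, Theorem \ref{Theorem-2} gives
$$
k=c_3(\Fcal)\equiv\delta(\Fcal)\pmod 2,
$$
which is the claim.

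The one step that needs care is the meaning of $\beta(\Fcal)$ when there are several singular points. I would define it as the relative class on $(\overline{B(r)},\partial B(r))$, with $r$ chosen so that all singular points lie in $B(r)$. This class comes from the global self-dual complex $\mathsf C^\bullet_{\Fcal}$, which is exact away from $\sing(\Fcal)$ by Lemma \ref{Lemma-Exact-Away-Origin}. It equals the sum of the local classes $\beta_{\Fcal,x_i}$, after each is transported to a common $\widetilde K^0(S^6)$. This follows from excision and additivity of relative $K$-classes for a complex whose support is a disjoint union of points; equivalently, it is the content of the preceding corollary, which I take as given. The identification with $\ZBbb$ is compatible with these transports, because the Koszul complex centered at each $x_i$ is homotopic, through translations, to the one centered at $0$. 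So every local Bott class maps to the same generator $\beta$.

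The main potential obstacle is therefore bookkeeping rather than mathematics. One has to make sure the integer $k$ is computed with the same normalization of $\beta$ as in Theorem \ref{Thm-K charge interpretation}. Once that is fixed, the corollary is a formal consequence of Theorem \ref{Thm-K charge interpretation}, the preceding corollary, and the parity Theorem \ref{Theorem-2}. An alternative route to the parity would use additivity of $\delta$ (Proposition \ref{AdditivePropertyOfTopologicalInvariants}) point by point, but this is unnecessary, since Theorem \ref{Theorem-2} already treats the total invariant.
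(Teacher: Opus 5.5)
Your proposal is correct and follows essentially the argument the paper intends. The paper gives no written proof; it means exactly this: the preceding corollary gives $\beta(\Fcal)=c_3(\Fcal)\beta$, so $k=c_3(\Fcal)$ because $\beta$ generates $\widetilde K^0(S^6)\cong\ZBbb$, and Theorem \ref{Theorem-2} then gives $k\equiv\delta(\Fcal)\pmod 2$.
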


\begin{cor}
Let $\Ecal$ be a family of rank-two torsion-free sheaves on $B\times \Delta$. Suppose that each fiber $\Ecal_t$ has finitely many isolated point singularities, all contained in $B(1/2)$. Then
$$
\kappa_{\Ecal}:\Delta\to \widetilde K^0(S^6),\qquad t\mapsto \beta(\Ecal_t),
$$
is constant. More precisely, for every $t\in\Delta$,
$$
\kappa_{\Ecal}(t)
=
\left(\sum_{x\in \sing(\Ecal_t)}c_3(\Ecal_t,x)\right)\beta
=
\left(\rank(p_*\Ecal xt^1(\Ecal^{**},\Ocal_X))-2\rank(p_*(\Ecal^{**}/\Ecal))\right)\beta.
$$
\end{cor}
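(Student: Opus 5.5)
The plan is to combine Theorem \ref{Theorem-1} with the pointwise identification of the $K$-theoretic charge in Theorem \ref{Thm-K charge interpretation}. There is one preliminary point. For each fiber $\Ecal_t$, the class $\beta(\Ecal_t)$ is the boundary class on a sphere $S_r^5$ with $1/2<r<1$, which encloses all singularities. It is built from the self-dual complex $\mathsf C^\bullet_{\Ecal_t}$, which is exact away from $\sing(\Ecal_t)$. By the preceding corollary (additivity over singular points),
$$
\beta(\Ecal_t)=\Bigl(\sum_{x\in\sing(\Ecal_t)}c_3(\Ecal_t,x)\Bigr)\beta=c_3(\Ecal_t)\beta .
$$
To justify this corollary, I would cover $\overline{B(r)}$ by small disjoint balls $B_{x_i}$ around the singular points. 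The complex is exact on the complement of $\bigcup_i B_{x_i}$, so the relative class on $(\overline{B(r)},S_r^5)$ is the image of the sum of the local relative classes under the excision isomorphism
$$
K^0\Bigl(\textstyle\bigsqcup_i \overline{B_{x_i}},\bigsqcup_i\partial B_{x_i}\Bigr)\to K^0(\overline{B(r)},S_r^5).
$$
Each local class is a copy of $\widetilde K^0(S^6)\cong\ZBbb$, and the map sends each local Bott generator to $\beta$, since it is induced by degree-one collapse maps. Theorem \ref{Thm-K charge interpretation} then evaluates each local class as $c_3(\Ecal_t,x_i)\beta$.

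With this in hand, the displayed formula follows directly from Theorem \ref{Theorem-1}:
$$
c_3(\Ecal_t)=\rank(p_*\Ecal xt^1(\Ecal^{**},\Ocal_X))-2\rank(p_*(\Ecal^{**}/\Ecal)),
$$
and the right-hand side does not depend on $t$. Lemma \ref{lem-flatness} guarantees that $\Ecal$ is a family in the sense of Theorem \ref{Theorem-1}, so that theorem applies. Constancy of $\kappa_{\Ecal}$ is then immediate.

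As an independent sanity check of constancy, I would also note a purely topological reason. The fibers $\Ecal_t$ are locally free on the region $(B\setminus B(1/2))\times\Delta$. So restricting the self-dual complex to $S_r^5\times\Delta$ gives a continuous family of clutching data, and homotopy invariance makes the class locally constant in $t$. The complex itself, however, depends on choices of resolution and determinant trivialization that need not vary holomorphically in $t$, so I would rely on the algebraic route above.

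The main obstacle is the additivity/excision step, specifically checking that the normalization of the local Bott classes matches the global one. I would handle it by choosing a resolution valid on all of $B(r)$, using Lemma \ref{Lemma-Beta-Resolution-Independent} to see that its restriction to each small ball computes the local $\beta_{\Ecal_t}$ at that point. The identification of each local generator with $\beta$ then follows from the Koszul complex at $x_i$ being homotopic, through translations, to the Koszul complex at $0$.
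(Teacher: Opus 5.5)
Your proposal is correct and follows the paper's route. The paper states this corollary without a separate proof, as the combination of the additivity corollary coming from Theorem \ref{Thm-K charge interpretation} with the family formula of Theorem \ref{Theorem-1}, which is exactly what you do. Your excision argument also fills in the additivity step that the paper leaves unstated. One small correction there: the map $\bigoplus_i\widetilde K^0(S^6)\to\widetilde K^0(S^6)$ is not itself an isomorphism when there is more than one singular point; it is the excision isomorphism followed by the forgetful map to the pair $(\overline{B(r)},S_r^5)$.
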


\section{Simple and classical interpretations in the reflexive case}\label{Section-Reflexive}
The previous section gave a $K$-theoretic interpretation of the local third Chern class for rank-two torsion-free sheaves with isolated point singularities. We now specialize to rank-two reflexive sheaves with a single isolated point singularity.

\subsection{A notion of degree}

Suppose $\Fcal$ is a rank-two reflexive sheaf on $B$ with an isolated point singularity at $0$. After shrinking $B$, choose a presentation
$$
0\to \Ocal_B^a \xrightarrow{\Phi} \Ocal_B^{a+2}\to \Fcal\to 0.
$$
Let
$$
M=\operatorname{Hom}(\CBbb^a,\CBbb^{a+2}),\qquad
\Sigma=\{A\in M:\rank A\leq a-1\}.
$$
Denote by $\Sigma_{\mathrm{sm}}$ the smooth locus of $\Sigma$. We regard $\Phi$ as a holomorphic map $\Phi:B\to M$. Since $\Fcal$ is locally free on $B\setminus\{0\}$, the restriction of $\Phi$ maps $B\setminus\{0\}$ to $M\setminus\Sigma$.

We will use the following elementary fact about $M\setminus\Sigma$.

\begin{lem}\label{Lemma-Cohomology-Minus-Sigma}
There is an isomorphism
$$
H^5(M\setminus\Sigma,\ZBbb)\cong \ZBbb.
$$
Moreover, a generator may be chosen so that its restriction under
$S_A^5\hookrightarrow M\setminus\Sigma$ is the standard generator of
$H^5(S_A^5,\ZBbb)$, where $A$ is any smooth point of $\Sigma$ and $S_A^5$ is the link of a transversal slice to $\Sigma$ in $M$ at $A$.
\end{lem}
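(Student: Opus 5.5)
The plan is to compute $H^5(M\setminus\Sigma,\ZBbb)$ from the fact that $M\setminus\Sigma$ is the space of full-rank (injective) linear maps $\CBbb^a\to\CBbb^{a+2}$. This space is the Stiefel manifold of $a$-frames in $\CBbb^{a+2}$, up to homotopy. Gram--Schmidt gives a deformation retraction of $M\setminus\Sigma$ onto $V_a(\CBbb^{a+2})\cong U(a+2)/U(2)$. Consider the fibration
$$
V_{a-1}(\CBbb^{a+1})\to V_a(\CBbb^{a+2})\to S^{2a+3}
$$
(projecting to the first vector), or better, the standard cell structure of complex Stiefel manifolds. From either one sees that $V_a(\CBbb^{a+2})$ is $4$-connected and its first nontrivial cohomology is $H^5\cong\ZBbb$. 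Concretely, $V_a(\CBbb^{n})$ with $n=a+2$ has the rational (indeed integral, torsion-free) cohomology of an exterior algebra on generators of degrees $2(n-a)+1=5,7,\dots,2n-1$. By Hurewicz, $H_5\cong\pi_5\cong\ZBbb$ and $H_4=0$, so universal coefficients give $H^5\cong\ZBbb$. This settles the first assertion. The base case $a=1$ is $\CBbb^3\setminus\{0\}\simeq S^5$, and induction on $a$ via the above fibration (with base $S^{2a+3}$, which is highly connected) shows $\pi_5(V_a(\CBbb^{a+2}))\cong\pi_5(V_1(\CBbb^3))=\pi_5(S^5)$.

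For the second assertion I need that the transversal link $S_A^5$ represents a generator of $H_5(M\setminus\Sigma)\cong\ZBbb$. Since $\Sigma$ is irreducible of codimension three and $M\setminus\Sigma$ is connected, all smooth points $A$ give homotopic link classes up to sign. The transversal slice bundle over the connected manifold $\Sigma_{\mathrm{sm}}$ is complex, hence oriented, so the orientation is consistent. It therefore suffices to compute at one convenient point,
$$
A_0=\begin{pmatrix}\id_{a-1}&0\\0&0\end{pmatrix}.
$$
Near $A_0$, as in the proof of Lemma~\ref{TransversalImpliesSimple}, a transversal slice is given by
$$
\begin{pmatrix}\id_{a-1}&0\\0&w\end{pmatrix},\qquad w\in\CBbb^3.
$$
Its link is the sphere $\{|w|=\epsilon\}$.

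Under the inclusion $V_1(\CBbb^3)\to V_a(\CBbb^{a+2})$ that adds the fixed frame $e_1,\dots,e_{a-1}$ (embedding $\CBbb^3$ as the span of the last three basis vectors), this link is exactly the generator of $\pi_5(S^5)$. This inclusion is precisely the iterated fiber inclusion from the induction above, which was shown to be an isomorphism on $\pi_5$, hence on $H_5$. The generator of $H^5$ dual to this class therefore restricts to the standard generator of $H^5(S^5_A,\ZBbb)$.

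The main obstacle is the bookkeeping in this last step. One must verify that the slice link really sits as the fiber of the iterated fibration, modulo the Gram--Schmidt retraction, and that this fiber inclusion induces an isomorphism on $\pi_5$. The latter follows from the long exact sequence, since the bases $S^{2k+3}$ for $k\ge 2$ have $\pi_5=\pi_6=0$. Independence of the choice of smooth point follows from connectedness of $\Sigma_{\mathrm{sm}}$, which holds because $\Sigma$ is irreducible.
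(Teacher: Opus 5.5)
The paper gives no proof of this lemma: it is stated as an ``elementary fact'' and then used to define $\deg(\Phi,0)$. So there is no argument to match against, and your proposal stands on its own. It is correct and gives a complete proof.

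The first assertion goes through as you describe. You use the Gram--Schmidt retraction $M\setminus\Sigma\simeq V_a(\CBbb^{a+2})$ and the iterated fibrations $V_{k-1}(\CBbb^{k+1})\to V_k(\CBbb^{k+2})\to S^{2k+3}$, whose bases have $\pi_5=\pi_6=0$ for $k\geq 2$. Together with Hurewicz and universal coefficients this gives $H^5(M\setminus\Sigma,\ZBbb)\cong\ZBbb$.

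The key point of the second assertion is also right: the transversal link at $A_0$ is exactly the iterated fiber $w\mapsto(e_1,\dots,e_{a-1},w)$. Transversality of your $w$-slice can be checked directly. Write a matrix near $A_0$ in blocks $P,q,R,s$, with $s$ the lower-right $3\times 1$ block. Then $\Sigma$ is locally the graph $s=RP^{-1}q$, whose tangent space at $A_0$ is $\{ds=0\}$.

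Two points could be tightened.

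\begin{enumerate}
\item At a fixed smooth point $A$, say why the class of the link does not depend on the choice of transversal slice or radius. This follows from a tubular neighbourhood of $\Sigma_{\mathrm{sm}}$ and from connectedness of the space of complex $3$-planes complementary to $T_A\Sigma$ (an affine chart of a Grassmannian).
\item For independence of $A$, a cleaner argument than transporting slices along paths in $\Sigma_{\mathrm{sm}}$ is homogeneity. The connected group $GL_a(\CBbb)\times GL_{a+2}(\CBbb)$ acts linearly on $M$, preserves $\Sigma$, and is transitive on $\Sigma_{\mathrm{sm}}=\{\rank A=a-1\}$. It carries transversal slices holomorphically to transversal slices, so it preserves their complex orientations, and by connectedness it acts trivially on $H^5(M\setminus\Sigma,\ZBbb)$.
\end{enumerate}

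For comparison, a natural alternative is Alexander duality. This gives $H^5(M\setminus\Sigma)\cong H^{BM}_{\dim_{\mathbb R}M-6}(\Sigma)\cong\ZBbb$, with generator the linking class dual to the fundamental class of the irreducible variety $\Sigma$. That generator evaluates to $1$ on every transverse link. This route is less explicit, but it directly exhibits $\deg(\Phi,0)$ as a local intersection number with $\Sigma$, which is the viewpoint used later in Proposition \ref{Prop-Fitting-Length-Degree}. Your Stiefel-manifold computation is more elementary and makes the normalization completely concrete.
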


Since $\Phi$ restricts to a map
$$
\Phi:B^*:=B\setminus\{0\}\to M\setminus \Sigma,
$$
we can define a notion of degree for $\Phi$ as follows.

\begin{defi}
The degree of $\Phi$ at $0$, denoted by $\deg(\Phi,0)$, is the integer for which the following diagram commutes:
$$
\begin{tikzcd}
H^5(M\setminus \Sigma,\ZBbb) \arrow{r}{\Phi^*} \arrow{d}[swap]{\lambda_A^*}
& H^5(B^*,\ZBbb) \arrow{d}{\lambda_0^*} \\
H^5(S_A^5,\ZBbb) \arrow{r}{\deg(\Phi,0)}
& H^5(S^5,\ZBbb).
\end{tikzcd}
$$
Here $A\in\Sigma_{\mathrm{sm}}$, $S_A^5$ is the link of a transversal slice to $\Sigma$ in $M$ at $A$, and $\lambda_A:S_A^5\hookrightarrow M\setminus\Sigma$ and $\lambda_0:S^5\hookrightarrow B^*$ are the natural inclusions.
\end{defi}

\begin{exam}
For a simple singularity, the presentation map is essentially the inclusion
$$
\Phi:B\to \CBbb^3,
$$
where $\Sigma=\{0\}$. In this case,
$$
\deg(\Phi,0)=1.
$$
\end{exam}

\begin{prop}
Suppose $\Fcal$ is a rank-two reflexive sheaf on $B$ with an isolated point singularity at $0$, and let $\Phi$ be a presentation map as above. Then
$$
\deg(\Phi,0)=c_3(\Fcal,0).
$$
In particular, $\deg(\Phi,0)$ is independent of the choice of $\Phi$ and depends only on the germ of $\Fcal$ at $0$.
\end{prop}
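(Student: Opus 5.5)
We propose to prove the equality by perturbation, reducing to simple singularities exactly as in the proof of Theorem \ref{Theorem-2}. By the lemma following Proposition \ref{Perturbation}, we can choose $\Phi'\in M$ such that $\Phi_t=\Phi+t\Phi'$ meets $\Sigma$ only in $\Sigma_{\mathrm{sm}}$ and transversally, for $0<|t|\ll 1$. Let $\Ecal=\Phi_\bullet^*\Ucal$ be the associated reflexive family. For small $t$, the singular points $x_1(t),\dots,x_N(t)$ of $\Ecal_t$ all lie in a small ball $B(r)$, and $\Phi_t$ maps a neighborhood of $S_r^5=\partial B(r)$ into $M\setminus\Sigma$. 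Two consequences follow.

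First, by Theorem \ref{Theorem-1} (or Proposition \ref{Prop-Reflexive-case}), $c_3(\Fcal,0)=c_3(\Ecal_t)=\sum_i c_3(\Ecal_t,x_i(t))$. By Lemma \ref{TransversalImpliesSimple} each $x_i(t)$ is a simple singularity, so each $c_3(\Ecal_t,x_i(t))=1$ and $c_3(\Fcal,0)=N$.

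Second, on the topological side, $\Phi_t|_{S_r^5}$ is homotopic to $\Phi|_{S_r^5}$ through maps into $M\setminus\Sigma$, via $s\mapsto \Phi+s\Phi'$ for $s\in[0,t]$. Hence $\deg(\Phi,0)$ equals the pullback degree computed on $S_r^5$ by $\Phi_t$. The key steps are then:
\begin{enumerate}
\item Use additivity. $B(r)$ minus small balls around the $x_i(t)$ is a cobordism between $S_r^5$ and the disjoint union of the small spheres $S^5_{x_i}$, and $\Phi_t$ extends over it into $M\setminus\Sigma$. Pulling back the generator $g\in H^5(M\setminus\Sigma,\ZBbb)$ from Lemma \ref{Lemma-Cohomology-Minus-Sigma} and applying Stokes (equivalently, the fundamental class relation in homology), we get that the degree on $S_r^5$ is the sum of the local degrees at the $x_i(t)$.
\item Compute each local degree as $+1$. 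Near $x_i(t)$, transversality and the normal form in Lemma \ref{TransversalImpliesSimple} show that $\Phi_t$ is a biholomorphism from a small ball onto a transversal slice of $\Sigma$ at $A=\Phi_t(x_i)$. It therefore carries the small sphere to a link $S_A^5$ by an orientation-preserving (holomorphic) map of degree one. The normalization in Lemma \ref{Lemma-Cohomology-Minus-Sigma} then gives local degree $1$, as in the example of a simple singularity. Here we need that the restriction of $g$ to a link is independent of the chosen smooth point $A$; this holds because $\Sigma_{\mathrm{sm}}$ is connected, $\Sigma$ being irreducible.
\end{enumerate}
Combining these, $\deg(\Phi,0)=N=c_3(\Fcal,0)$. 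Independence of $\Phi$ then follows because the right-hand side depends only on the germ of $\Fcal$.

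The main obstacle is step (1) together with the orientation bookkeeping. We must check that $\Phi_t$ maps the whole punctured region $B(r)\setminus\{x_i\}$ into $M\setminus\Sigma$, which is true by construction, so that the pullback class is defined there and the homology relation $[S_r^5]=\sum_i[S^5_{x_i}]$ in $H_5(B(r)\setminus\{x_i\})$ applies. We must also check that the complex orientations of the slice links and of $S^5$ match, which follows from holomorphicity. If Lemma \ref{Lemma-Cohomology-Minus-Sigma} is only taken as stated, we would also verify that the definition of $\deg$ via the commutative square is equivalent to this evaluation-on-fundamental-class description.
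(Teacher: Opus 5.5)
Your proposal is correct and follows essentially the same route as the paper. You perturb $\Phi$ to $\Phi_t$ with only simple, transverse singularities, use homotopy invariance on the boundary sphere and additivity over the singular points to get $\deg(\Phi,0)=\sum_x\deg(\Phi_t,x)=N$, and invoke Theorem \ref{Theorem-1} to identify $N=c_3(\Ecal_t)$ with $c_3(\Fcal,0)$. Your cobordism argument for additivity and your slice and orientation discussion for the local degree $+1$ make explicit steps that the paper only asserts.
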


\begin{proof}
Let $\Phi_\bullet:B\times\Delta\to M$ be the perturbing family constructed in Proposition \ref{Perturbation}, with $\Phi_0=\Phi$, and let $\Ecal$ be the induced family
$$
0\to \Ocal_X^a\xrightarrow{\Phi_\bullet}\Ocal_X^{a+2}\to \Ecal\to 0,
\qquad X=B\times\Delta.
$$
After shrinking $\Delta$, its central fiber satisfies $\Ccal\cong\Fcal$, and for every $0<|t|\ll1$ the sheaf $\Ecal_t$ has only simple singularities, all contained in $B(1/2)$.

The maps $\Phi_t|_{\partial B}$ give a homotopy in $M\setminus\Sigma$. By homotopy invariance of the boundary degree and additivity over isolated singularities,
$$
\deg(\Phi,0)=\sum_{x\in\sing(\Ecal_t)}\deg(\Phi_t,x).
$$
Each singularity of $\Ecal_t$ is simple, hence
$$
\deg(\Phi_t,x)=1=c_3(\Ecal_t,x).
$$
Therefore
$$
\deg(\Phi,0)=\sum_{x\in\sing(\Ecal_t)}c_3(\Ecal_t,x)=c_3(\Ecal_t).
$$
Applying Theorem \ref{Theorem-1} to the family $\Ecal$ gives
$$
c_3(\Ecal_t)=c_3(\Ccal).
$$
Since $\Ccal\cong\Fcal$ and $\Fcal$ has only the singularity $0$,
$$
c_3(\Ccal)=c_3(\Fcal,0).
$$
Combining the equalities gives
$$
\deg(\Phi,0)=c_3(\Fcal,0).
$$
\end{proof}
The preceding proposition makes the following definition intrinsic.

\begin{defi}
Let $\Fcal$ be a rank-two reflexive sheaf on $B$ with an isolated point singularity at $0$. Choose a local presentation near $0$ of the form
$$
0\to \Ocal_B^a \xrightarrow{\Phi} \Ocal_B^{a+2}\to \Fcal\to 0.
$$
The degree of $\Fcal$ at $0$ is
$$
\deg(\Fcal,0):=\deg(\Phi,0).
$$
\end{defi}

As an immediate consequence, the local $K$-theoretic charge is computed by this degree.

\begin{cor}
Let $\Fcal$ be a rank-two reflexive sheaf on $B$ with an isolated point singularity at $0$. Then
$$
\beta(\Fcal,0)=\deg(\Fcal,0)\beta.
$$
In particular,
$$
\kappa_{\Fcal}(0)=c_3(\Fcal,0)=\deg(\Fcal,0).
$$
\end{cor}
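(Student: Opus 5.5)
This corollary is a direct combination of two results already established: Theorem \ref{Thm-K charge interpretation} and the preceding proposition identifying $\deg(\Phi,0)$ with $c_3(\Fcal,0)$. The proof has three steps.

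First, specialize Theorem \ref{Thm-K charge interpretation} to the reflexive case. Here $\Fcal$ is a rank-two reflexive sheaf with an isolated point singularity at $0$, so in particular it is torsion-free. The theorem then gives $\beta(\Fcal,0)=\beta_{\Fcal}=c_3(\Fcal,0)\beta$, and therefore $\kappa_{\Fcal}(0)=c_3(\Fcal,0)$. Since $\Fcal^{**}/\Fcal=0$, the self-dual complex $\mathsf C^\bullet_{\Fcal}$ can be built from the two-term presentation $0\to\Ocal_B^a\xrightarrow{\Phi}\Ocal_B^{a+2}\to\Fcal\to0$, taking the first map of the resolution to be zero. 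Lemma \ref{Lemma-Beta-Resolution-Independent} guarantees that this choice does not affect $\beta_{\Fcal}$.

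Second, invoke the preceding proposition, $\deg(\Phi,0)=c_3(\Fcal,0)$. It shows that $\deg(\Fcal,0)$ is well defined and equal to $c_3(\Fcal,0)$.

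Third, substitute: $\beta(\Fcal,0)=c_3(\Fcal,0)\beta=\deg(\Fcal,0)\beta$. The statement about $\kappa_{\Fcal}(0)$ then follows from Proposition \ref{BottGenerator}, since $\beta$ generates $\widetilde K^0(S^6)\cong\ZBbb$ and so the coefficient of $\beta$ is uniquely determined.

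There is no real obstacle. The only point needing care is notational: $\beta(\Fcal,0)$ and $\kappa_{\Fcal}(0)$ should be read as the single-point versions $\beta_{\Fcal}$ and $\kappa_{\Fcal}$ from Theorem \ref{Thm-K charge interpretation}. This means computing on a small ball around $0$ containing no other singular points, so that the boundary class of the previous corollary reduces to the local class.
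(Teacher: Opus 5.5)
Your proposal is correct and follows the paper's route: the corollary is stated there as an immediate consequence of Theorem \ref{Thm-K charge interpretation}, which gives $\beta_{\Fcal}=c_3(\Fcal,0)\beta$, combined with the preceding proposition $\deg(\Phi,0)=c_3(\Fcal,0)$. Your detour through the two-term presentation is unnecessary but harmless; strictly, you should take the first module to be zero, not just the first map, so that the sequence remains a resolution.
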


\subsection{Relation to the Fitting scheme and the Buchsbaum--Rim multiplicity}
In this section, we use the topological interpretation of the local third Chern class to relate it to other algebraic invariants of the sheaf, such as the Fitting ideal and the Buchsbaum--Rim multiplicity.

Below we always assume $\Fcal$ is a rank-two reflexive sheaf with an isolated point singularity at $0$. Take a minimal resolution of $\Fcal$ at $0$ as 
$$
0\to \Ocal^{a} \xrightarrow{\Phi} \Ocal^{a+2} \to \Fcal \to 0.
$$
There are two closely related objects.

\begin{enumerate}
\item The Fitting ideal defining the non-locally-free locus is
$$
I_{\Fcal}:=\operatorname{Fitt}_2(\Fcal)
=I_a(\Phi)
=
\operatorname{Im}\left(
\bigwedge^a\Phi^*:
\bigwedge^a(\Ocal^{a+2})^*
\to
\bigwedge^a(\Ocal^a)^*
\cong
\Ocal
\right).
$$
Equivalently, $I_{\Fcal}$ is the ideal generated by the $a\times a$
minors of a matrix for $\Phi$. Since the singularity is isolated at $0$,
this ideal is $\mathfrak m_0$-primary where $\mathfrak m_0$ denotes the maximal ideal of the origin and defines the zero-dimensional
Fitting scheme
$$
\operatorname{Spec}(\Ocal/I_{\Fcal}).
$$

\item Dualizing the resolution gives
$$
(\Ocal^{a+2})^*
\xrightarrow{\Phi^*}
(\Ocal^a)^*
\to
\Ecal xt^1(\Fcal,\Ocal)
\to 0.
$$
Hence
$$
\Ecal xt^1(\Fcal,\Ocal)\cong \operatorname{coker}(\Phi^*),
\qquad
\operatorname{Fitt}_0\bigl(\Ecal xt^1(\Fcal,\Ocal)\bigr)=I_{\Fcal}.
$$
\end{enumerate}

The following elementary fact is well-known.
\begin{lem}
$\Ecal xt^1(\Fcal,\Ocal)$ is naturally an
$\Ocal/I_{\Fcal}$-module, and hence is supported on the Fitting scheme
defined by $I_{\Fcal}$.
\end{lem}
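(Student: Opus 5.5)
The claim follows from the standard fact that the zeroth Fitting ideal of a finitely presented module annihilates it. We apply this to the presentation
$$
(\Ocal^{a+2})^*\xrightarrow{\Phi^*}(\Ocal^a)^*\to \Ecal xt^1(\Fcal,\Ocal)\to 0
$$
recorded above. Here $\operatorname{Fitt}_0(\Ecal xt^1(\Fcal,\Ocal))$ is the ideal of $a\times a$ minors of $\Phi^*$. These are the $a\times a$ minors of $\Phi$, so the ideal equals $I_{\Fcal}$.

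To see that $I_{\Fcal}$ annihilates the cokernel, we use Cramer's rule. Fix an $a\times a$ minor $\Delta=\det N$, where $N$ is the $a\times a$ submatrix of $\Phi^*$ obtained by choosing $a$ of its $a+2$ columns. Let $\operatorname{adj}(N)$ be the adjugate matrix, so that $N\operatorname{adj}(N)=\Delta\cdot\id_a$. For any $v\in(\Ocal^a)^*$ we have
$$
\Delta v=N(\operatorname{adj}(N)v).
$$
The vector $N(\operatorname{adj}(N)v)$ is $\Phi^*$ applied to the element of $(\Ocal^{a+2})^*$ whose entries in the chosen columns are those of $\operatorname{adj}(N)v$ and whose other two entries are $0$. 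Hence $\Delta v\in\operatorname{Im}(\Phi^*)$. Therefore $\Delta$ kills $\operatorname{coker}(\Phi^*)$. Since the minors generate $I_{\Fcal}$, we get $I_{\Fcal}\cdot\Ecal xt^1(\Fcal,\Ocal)=0$.

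It follows that $\Ecal xt^1(\Fcal,\Ocal)$ is a module over $\Ocal/I_{\Fcal}$. In particular, its support lies in $V(I_{\Fcal})$, which is the Fitting scheme $\operatorname{Spec}(\Ocal/I_{\Fcal})$.

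For naturality, we use that Fitting ideals do not depend on the chosen presentation, by Fitting's lemma. Also, $\Ecal xt^1(\Fcal,\Ocal)$ does not depend on the resolution. So the $\Ocal/I_{\Fcal}$-module structure is intrinsic to the germ of $\Fcal$.

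There is no real obstacle in this argument. The only point to check is the identification of the minors of $\Phi^*$ with the minors of $\Phi$: transposition preserves every minor, so the two ideals agree.
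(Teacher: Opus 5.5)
Your proof is correct and follows essentially the same route as the paper. The paper also fixes an $a\times a$ minor $\Delta$, takes the corresponding $a\times a$ submatrix $A$ of $\Phi^*$, and uses $A\,\operatorname{adj}(A)=\Delta\,\id$ to conclude $\Delta v\in\operatorname{Im}(\Phi^*)$ for every local section $v$. Your added remarks identifying $\operatorname{Fitt}_0$ via minors of the transpose, and invoking Fitting's lemma to show the structure is intrinsic, are fine but go beyond what the paper writes.
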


\begin{proof}
It suffices to show that every $a\times a$ minor $\Delta$ of $\Phi$ annihilates $\Ccal oker(\Phi^*)$. Choose the corresponding $a\times a$ submatrix $A$ of $\Phi^*$. Since $A\,\operatorname{adj}(A)=\Delta\operatorname{id}$, for every local section $v$ of $(\Ocal^a)^*$ we get $\Delta v\in\operatorname{Im}(\Phi^*)$. This proves the claim.
\end{proof}

We need the following tautological interpretation of the length of the fitting ideal.
\begin{lem}
    $\ell(\Ocal/\Ical_{\Fcal},0)$ is equal to the local intersection multiplicity of the graph of $\Phi: B \to M$ with $B\times \Sigma$ in $B\times M$ at $(0,\Phi(0))$. 
\end{lem}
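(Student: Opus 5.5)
The plan is to identify the intersection product directly with the Fitting scheme through the graph embedding. Let $\Gamma_\Phi=\{(x,\Phi(x))\}\subset B\times M$ be the graph of $\Phi$. It is a smooth closed submanifold of dimension three. The projection $\Gamma_\Phi\to B$ is a biholomorphism, under which the scheme-theoretic intersection $\Gamma_\Phi\cap(B\times\Sigma)$ is identified with $\Phi^{-1}(\Sigma)$. The ideal of $\Sigma$ in $\Ocal_M$ is generated by the $a\times a$ minors of the tautological matrix $\mathsf A$; this is classical for determinantal varieties, which are reduced and irreducible. Pulling back along $\Phi$, the ideal of $\Phi^{-1}(\Sigma)$ is exactly $I_a(\Phi)=\Ical_{\Fcal}$. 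Hence
$$
\Ocal_{\Gamma_\Phi\cap(B\times\Sigma),(0,\Phi(0))}\cong \Ocal_{B,0}/\Ical_{\Fcal}.
$$

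Next I would compute the local intersection multiplicity via Serre's Tor formula:
$$
i=\sum_j(-1)^j\ell\bigl(\Tcal or_j^{\Ocal_{B\times M}}(\Ocal_{\Gamma_\Phi},\Ocal_{B\times\Sigma})\bigr).
$$
The dimensions are proper: $\dim\Gamma_\Phi+\dim(B\times\Sigma)-\dim(B\times M)=3+(3+\dim M-3)-(3+\dim M)=0$, and the intersection is the single point, since the singularity is isolated. Therefore the formula applies.

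The key step is to show that the higher Tor terms vanish, so that only $\ell(\Ocal/\Ical_{\Fcal},0)$ survives. The graph is cut out in $B\times M$ by the regular sequence consisting of the entries of $\xi-\Phi(x)$, where $\xi$ denotes the coordinates on $M$. So $\Tcal or_j(\Ocal_{\Gamma_\Phi},\Ocal_{B\times\Sigma})$ is the Koszul homology of this sequence on $\Ocal_{B\times\Sigma}$. The ring $\Ocal_{B\times\Sigma}$ is Cohen--Macaulay, because determinantal rings are Cohen--Macaulay (Eagon--Hochster). The quotient by these $\dim M$ elements has dimension zero, and $\dim(B\times\Sigma)=\dim M$. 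A system of parameters in a Cohen--Macaulay local ring is a regular sequence, so the Koszul homology vanishes in positive degrees. This gives
$$
i=\ell\bigl(\Ocal_{B\times\Sigma}/(\xi-\Phi(x))\bigr)=\ell(\Ocal/\Ical_{\Fcal},0).
$$

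The main obstacle is justifying the two classical inputs: that the minors generate the reduced ideal of $\Sigma$, and that $\Sigma$ is Cohen--Macaulay. I would simply cite these (Eagon--Hochster, or Bruns--Vetter). If one defines intersection multiplicity directly as the length of the scheme-theoretic intersection, the Cohen--Macaulay step is not needed and the lemma becomes the tautology of the first paragraph. I expect the paper intends this reading, with Serre's formula only confirming consistency with the topological intersection number used later.
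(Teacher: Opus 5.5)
Your argument is correct. Its first paragraph is essentially all the paper does. The paper gives the lemma no proof, calling it a ``tautological interpretation''. The only justification it offers, a few lines later, is that $\Sigma$ is cut out by the maximal minors of the universal matrix, so the local ring $A_\Phi$ of the scheme-theoretic intersection $\Gamma_\Phi\cap(B\times\Sigma)$ is $\Ocal_{B,0}/\Ical_{\Fcal}$. The paper thus reads ``local intersection multiplicity'' as $\ell(A_\Phi)$.

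Your second step goes beyond the paper. You identify this length with Serre's multiplicity $\sum_j(-1)^j\ell\bigl(\Tcal or_j(\Ocal_{\Gamma_\Phi},\Ocal_{B\times\Sigma})\bigr)$, by resolving $\Ocal_{\Gamma_\Phi}$ with a Koszul complex and using that $B\times\Sigma$ is Cohen--Macaulay (Eagon--Northcott for maximal minors, or Hochster--Eagon). This is the part that matters downstream. In Proposition~\ref{Prop-Fitting-Length-Degree} the paper perturbs $\Phi$ generically and asserts that the multiplicity at $0$ equals the number of nearby transverse points. That conservation holds for Serre's multiplicity. Equivalently, the Cohen--Macaulay property makes $\Ocal_{B\times\Delta}/I_a(\Phi_\bullet)$ finite and flat over $\Delta$. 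It does not hold in general for the bare length of a scheme-theoretic intersection.

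So the tautological reading suffices for the lemma as stated, while your version supplies the input needed for its later use.
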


The preceding interpretation gives the following identity.

\begin{prop}\label{Prop-Fitting-Length-Degree}
$
c_3(\Fcal,0)=\deg(\Phi,0)=\ell(\Ocal_B/\Ical_{\Fcal},0).$
\end{prop}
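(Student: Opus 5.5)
The first equality $c_3(\Fcal,0)=\deg(\Phi,0)$ is the content of the preceding proposition, so it remains to show $\deg(\Phi,0)=\ell(\Ocal_B/\Ical_{\Fcal},0)$. By the tautological lemma just stated, the right-hand side is the local intersection multiplicity $i_{(0,\Phi(0))}(\Gamma_\Phi, B\times\Sigma)$ of the graph $\Gamma_\Phi\subset B\times M$ with $B\times\Sigma$. The plan is to show that both this intersection multiplicity and the degree are preserved under the perturbation of Proposition \ref{Perturbation}, and that they agree at simple (transverse) points.

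First, I would take the perturbation $\Phi_t=\Phi+t\Phi'$ from Proposition \ref{Perturbation}. For $0<|t|\ll 1$, $\Phi_t$ meets $\Sigma$ only along $\Sigma_{\mathrm{sm}}$, transversally, at finitely many points $x_i(t)\in B(1/2)$. These points converge to $0$ as $t\to 0$, because the degeneracy locus of $\Phi_\bullet$ is finite over $\Delta$.

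Next, I would use conservation of number for proper intersections. The graphs $\Gamma_{\Phi_t}$ form a flat family of smooth threefolds in $B\times M$. Each meets the codimension-three, Cohen--Macaulay determinantal variety $B\times\Sigma$ in a zero-dimensional set near $0$. Hence
$$\ell(\Ocal_B/\Ical_{\Fcal},0)=\sum_i i_{x_i(t)}(\Gamma_{\Phi_t},B\times\Sigma).$$
Algebraically, this is the statement that $\Ocal_X/I_a(\Phi_\bullet)$ is flat over $\Delta$. That flatness holds because $I_a(\Phi_\bullet)$ defines a codimension-three determinantal locus of the expected codimension, so by Eagon--Northcott it is Cohen--Macaulay of dimension one and finite over $\Delta$, hence flat. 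The fibre lengths are therefore constant, and the central fibre is $\Ocal_B/I_a(\Phi)=\Ocal_B/\Ical_{\Fcal}$, since minors commute with restriction.

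At each transverse point $x_i(t)$, Lemma \ref{TransversalImpliesSimple} gives the normal form with $\Phi_t$ equal to $\id_{r-1}\oplus(z_1,z_2,z_3)^t$. Its $a\times a$ minors generate $\mathfrak m_{x_i}$, so each local length is $1$. On the other side, the proof of the preceding proposition gives $\deg(\Phi,0)=\sum_i\deg(\Phi_t,x_i)=\#\{x_i\}$. Comparing the two counts yields $\deg(\Phi,0)=\ell(\Ocal_B/\Ical_{\Fcal},0)$.

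The main obstacle is the flatness (constancy of length) step. One must check that the depth/Cohen--Macaulay property of the Eagon--Northcott-resolved ideal $I_a(\Phi_\bullet)$ on the fourfold $X$ really gives $t$-torsion-freeness of $\Ocal_X/I_a(\Phi_\bullet)$. I would argue this by noting that its associated primes are all one-dimensional and dominate $\Delta$, since the locus is finite over $\Delta$ and has no embedded components. A non-minimal presentation causes no difficulty, since adding trivial summands does not change $I_a$.
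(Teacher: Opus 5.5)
Your proposal is correct and follows the paper's own route. Both arguments perturb $\Phi$ to a transverse $\Phi_t$, note that each transverse point contributes length one, and match that count with $\deg(\Phi,0)$ through the degree comparison in the preceding proposition. The paper simply asserts that the intersection multiplicity is conserved under the perturbation; you justify that step by showing $\Ocal_X/I_a(\Phi_\bullet)$ is Cohen--Macaulay (Eagon--Northcott) and hence flat over $\Delta$, which usefully fills in what the paper leaves implicit.
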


\begin{proof}
By the degree comparison above,
$$
c_3(\Fcal,0)=\deg(\Phi,0).
$$
It remains to identify $\deg(\Phi,0)$ with the Fitting length. By the previous lemma, $\ell(\Ocal_B/\Ical_{\Fcal},0)$ is the local intersection multiplicity of $\Gamma_\Phi$ and $B\times\Sigma$ at $(0,\Phi(0))$. After a generic small perturbation $\Phi_t$ and after shrinking to a small ball $B(\epsilon)$ around $0$, this local intersection becomes transverse, and each point contributes multiplicity one. Hence
$$
\ell(\Ocal_B/\Ical_{\Fcal},0)
=
\#\bigl(\Phi_t^{-1}(\Sigma)\cap B(\epsilon)\bigr)
$$
while the right hand side is $\deg(\Phi,0)$. Therefore
$$
\deg(\Phi,0)=\ell(\Ocal_B/\Ical_{\Fcal},0),
$$
and the claim follows.
\end{proof}

The interpretation above also connects $c_3(\Fcal,0)$ with two classical multiplicities. First consider the local intersection ring. Set
$$
R:=\Ocal_{B\times M,(0,\Phi(0))}.
$$
Let $\Gamma_\Phi\subset B\times M$ be the graph of $\Phi$, and define
$$
\Ical_\Phi:=I_{\Gamma_\Phi}+I_{B\times\Sigma}\subset R.
$$
Thus
$$
A_\Phi:=R/\Ical_\Phi
$$
is the local ring of the scheme-theoretic intersection $\Gamma_\Phi\cap(B\times\Sigma)$ at $(0,\Phi(0))$. Since $\Fcal$ has an isolated singularity at $0$, the ring $A_\Phi$ is Artinian. Moreover, the defining equations of $\Sigma$ are the maximal minors of the universal matrix, so restriction to the graph gives
$$
A_\Phi\cong \Ocal_{B,0}/I_{\Fcal}.
$$

Let $\mathfrak n_\Phi$ be the maximal ideal of $A_\Phi$. We define the Hilbert--Samuel intersection multiplicity by
$$
\operatorname{hs}(\Fcal,0):=e_{\mathfrak n_\Phi}(A_\Phi).
$$
Here recall that the Hilbert--Samuel function of $(A_\Phi,\mathfrak n_\Phi)$ is
$$
H_{\mathfrak n_\Phi,A_\Phi}(q):=\ell(A_\Phi/\mathfrak n_\Phi^q),
\qquad q\geq 1.
$$
For $q\gg0$, it agrees with a polynomial $P_{\mathfrak n_\Phi,A_\Phi}(q)$ whose leading term is
$$
\frac{e_{\mathfrak n_\Phi}(A_\Phi)}{(\dim A_\Phi)!}q^{\dim A_\Phi}.
$$
Since $A_\Phi$ is zero-dimensional, $\mathfrak n_\Phi$ is nilpotent. Hence, for $q\gg0$,
$$
A_\Phi/\mathfrak n_\Phi^q=A_\Phi,
$$
and therefore
$$
H_{\mathfrak n_\Phi,A_\Phi}(q)=\ell(A_\Phi).
$$
Thus $P_{\mathfrak n_\Phi,A_\Phi}$ is the constant polynomial $\ell(A_\Phi)$, and since $\dim A_\Phi=0$,
$$
e_{\mathfrak n_\Phi}(A_\Phi)=\ell(A_\Phi).
$$
Therefore
$$
\operatorname{hs}(\Fcal,0)
=
\ell(\Ocal_{B,0}/I_{\Fcal})
=
c_3(\Fcal,0).
$$

We now recall the Buchsbaum--Rim multiplicity in the present setting; see \cite[3.1, 3.4, 4.5(2)]{Buchsbaum&Rim:64}. Let
$$
R_0:=\Ocal_{B,0},\qquad E:=(R_0^a)^*.
$$
Dualizing the presentation of $\Fcal$ gives
$$
(R_0^{a+2})^*\xrightarrow{\Phi^*}E\to \Ecal xt^1(\Fcal,\Ocal_B)_0\to 0.
$$
Set
$$
N:=\operatorname{Im}(\Phi^*)\subset E.
$$
Then
$$
E/N\cong \Ecal xt^1(\Fcal,\Ocal_B)_0,
$$
so $E/N$ has finite length.

For $q\geq0$, define
$$
\mathcal R_q(N):=
\operatorname{Im}\left(\operatorname{Sym}_{R_0}^q N
\to
\operatorname{Sym}_{R_0}^q E\right).
$$
Equivalently,
$$
\mathcal R(N):=\bigoplus_{q\geq0}\mathcal R_q(N)
\subset \operatorname{Sym}_{R_0}(E)
$$
is the Rees algebra of $N$ inside $\operatorname{Sym}_{R_0}(E)$. The Buchsbaum--Rim function is
$$
H_{N,E}(q):=
\ell_{R_0}\left(\operatorname{Sym}_{R_0}^q E/\mathcal R_q(N)\right).
$$
Since $\dim R_0=3$ and $\rank_{R_0}E=a$, for $q\gg0$ this agrees with a polynomial $P_{N,E}(q)$ of degree
$$
3+a-1=a+2.
$$
Its leading term is
$$
\frac{e_{\mathrm{BR}}(N,E)}{(a+2)!}q^{a+2}.
$$
The integer $e_{\mathrm{BR}}(N,E)$ is the Buchsbaum--Rim multiplicity of $N\subset E$. In our notation, set
$$
\operatorname{br}(\Fcal,0):=e_{\mathrm{BR}}(N,E).
$$

We now explain why this multiplicity reduces to a length in our case. By a parameter module in a free module $E$ over a local ring $R_0$ we mean a submodule of finite colength generated by
$$
\dim R_0+\rank_{R_0}E-1
$$
elements. Thus $N$ is a parameter module: the quotient $E/N$ has finite length, and $N$ is generated by the $a+2$ columns of
$$
\Phi^*:(R_0^{a+2})^*\to E,
$$
while
$$
a+2=\dim R_0+\rank_{R_0}E-1.
$$
Since $R_0$ is regular local, hence Cohen--Macaulay, the Buchsbaum--Rim formula for parameter modules gives
$$
\operatorname{br}(\Fcal,0)
=
e_{\mathrm{BR}}(N,E)
=
\ell_{R_0}(E/N).
$$
Using $E/N\cong \Ecal xt^1(\Fcal,\Ocal_B)_0$, we obtain
$$
\operatorname{br}(\Fcal,0)
=
\ell(\Ecal xt^1(\Fcal,\Ocal_B),0).
$$
Since $\Fcal$ is reflexive,
$$
c_3(\Fcal,0)=\ell(\Ecal xt^1(\Fcal,\Ocal_B),0).
$$
Therefore
$$
\operatorname{br}(\Fcal,0)=c_3(\Fcal,0).
$$

Combining these identities gives the following corollary.

\begin{cor}
Let $\Fcal$ be a rank-two reflexive sheaf on $B$ with an isolated point singularity at $0$. Then the following quantities are equal:
\begin{enumerate}
\item the local third Chern class $c_3(\Fcal,0)$;
\item the local $K$-theoretic charge $\kappa_{\Fcal}(0)$;
\item the length $\ell(\Ecal xt^1(\Fcal,\Ocal_B),0)$;
\item the topological degree $\deg(\Fcal,0)$;
\item the Hilbert--Samuel multiplicity $\operatorname{hs}(\Fcal,0)$;
\item the length of the Fitting scheme $\ell(\Ocal_B/\Ical_{\Fcal},0)$;
\item the Buchsbaum--Rim multiplicity $\operatorname{br}(\Fcal,0)$.
\end{enumerate}
\end{cor}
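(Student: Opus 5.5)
The corollary collects identities already established above, so the plan is to chain them around the single reference quantity $c_3(\Fcal,0)$. First, since $\Fcal$ is reflexive, $\Fcal^{**}/\Fcal=0$. Definition \ref{defi-2.1} then gives (1)$=$(3) directly:
$$
c_3(\Fcal,0)=\ell(\Ecal xt^1(\Fcal,\Ocal_B),0).
$$
Second, Theorem \ref{Thm-K charge interpretation} gives $\beta_{\Fcal}=c_3(\Fcal,0)\beta$. Proposition \ref{BottGenerator} says $\beta$ generates $\widetilde K^0(S^6)\cong\ZBbb$, so the charge $\kappa_{\Fcal}(0)$ is well defined and equals $c_3(\Fcal,0)$. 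This gives (1)$=$(2).

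Third, the proposition comparing degree and $c_3$ gives (1)$=$(4), namely $\deg(\Fcal,0)=\deg(\Phi,0)=c_3(\Fcal,0)$. Its argument runs as follows. Perturb $\Phi$ by Proposition \ref{Perturbation} to a family with only simple singularities. Use homotopy invariance and additivity of the boundary degree, note that each simple singularity has degree one, and conclude with Theorem \ref{Theorem-1}. Independence of the presentation is part of that proposition, so (4) is intrinsic. Proposition \ref{Prop-Fitting-Length-Degree} then gives (4)$=$(6). The Hilbert--Samuel discussion gives (5)$=$(6): $A_\Phi\cong\Ocal_{B,0}/I_{\Fcal}$ is Artinian, so its multiplicity equals its length. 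Finally, the Buchsbaum--Rim discussion gives (7)$=$(3). Here $N=\operatorname{Im}(\Phi^*)$ is a parameter module in $E=(R_0^a)^*$ over the Cohen--Macaulay ring $R_0$, so $e_{\mathrm{BR}}(N,E)=\ell(E/N)$, and $E/N\cong\Ecal xt^1(\Fcal,\Ocal_B)_0$.

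Putting these together, (1)$=$(2)$=$(3)$=$(7) and (1)$=$(4)$=$(6)$=$(5), which proves all seven quantities equal. No new argument is needed. The step I would check most carefully is the identification (4)$=$(6). It depends on the tautological lemma identifying the Fitting length with a local intersection multiplicity, and on conservation of that multiplicity under a generic perturbation. Both rest on $\Gamma_\Phi$ and $B\times\Sigma$ meeting properly and on $\Sigma$ being Cohen--Macaulay, so that intersection multiplicity is computed by the length with no higher Tor terms. Since $\Sigma$ is a determinantal variety of the expected codimension three, it is Cohen--Macaulay by Eagon--Northcott, and I would cite this if asked to justify that step.

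As a sanity check, the chain (6)$=$(3) can also be seen purely algebraically. For the parameter module $N$, $\operatorname{Fitt}_0(E/N)=I_{\Fcal}$, and the Buchsbaum--Rim theory gives $\ell(R_0/I_a(\Phi))=\ell(E/N)$ in the Cohen--Macaulay setting. This gives an independent route around the topological degree.
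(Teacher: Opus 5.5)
Your proposal is correct and follows the paper's own argument. The paper likewise obtains the corollary by chaining the reflexive case of Definition \ref{defi-2.1}, Theorem \ref{Thm-K charge interpretation}, the degree proposition, Proposition \ref{Prop-Fitting-Length-Degree}, and the Hilbert--Samuel and Buchsbaum--Rim computations around $c_3(\Fcal,0)$. Your remark that the perturbation step in (4)$=$(6) relies on the Cohen--Macaulayness of the determinantal locus $\Sigma$ (via Eagon--Northcott) is also correct, and it justifies a point the paper leaves implicit.
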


Now we give an example illustrating the equalities above.

\begin{exam}
Consider the sheaf $\Fcal$ defined by the presentation
$$
0\to \Ocal_B
\xrightarrow{\Phi}
\Ocal_B^3\to \Fcal\to 0,
$$
where
$$
\Phi(z)=
\begin{pmatrix}
z_1\\
z_2\\
z_3^k
\end{pmatrix}.
$$
Since $(z_1,z_2,z_3^k)$ is a regular sequence and its common zero locus is $0$, the sheaf $\Fcal$ is a rank-two reflexive sheaf with an isolated singularity at $0$.

Let $R_0=\Ocal_{B,0}$. Dualizing the presentation gives
$$
\Phi^*:(R_0^3)^*\to R_0,
\qquad
\Phi^*(a,b,c)=az_1+bz_2+cz_3^k.
$$
Thus
$$
\Ecal xt^1(\Fcal,\Ocal_B)_0\cong \Ccal oker(\Phi^*)
\cong R_0/(z_1,z_2,z_3^k).
$$
In particular,
$$
\ell(\Ecal xt^1(\Fcal,\Ocal_B),0)=k.
$$
Since the presentation has rank one on the left, the Fitting ideal is the ideal of the entries of $\Phi$:
$$
\Ical_{\Fcal}=I_1(\Phi)=(z_1,z_2,z_3^k).
$$
Hence
$$
\ell(\Ocal_B/\Ical_{\Fcal},0)=k.
$$

We now compute the Buchsbaum--Rim multiplicity directly. In this example,
$$
E:=(R_0)^*\cong R_0,\qquad
N:=\operatorname{Im}(\Phi^*)=(z_1,z_2,z_3^k)\subset E.
$$
Therefore
$$
\operatorname{Sym}_{R_0}^n E\cong R_0,
\qquad
\mathcal R_n(N)=N^n=(z_1,z_2,z_3^k)^n.
$$
It follows that
$$
\operatorname{Sym}_{R_0}^nE/\mathcal R_n(N)
\cong R_0/(z_1,z_2,z_3^k)^n.
$$
We compute the length of this quotient. A monomial $z_1^\alpha z_2^\beta z_3^\gamma$ survives modulo $(z_1,z_2,z_3^k)^n$ precisely when
$$
\alpha+\beta+\left\lfloor \frac{\gamma}{k}\right\rfloor\leq n-1.
$$
Write
$$
\gamma=kq+s,\qquad 0\leq s\leq k-1.
$$
For fixed $q$, the number of pairs $(\alpha,\beta)$ satisfying
$$
\alpha+\beta\leq n-1-q
$$
is
$$
\binom{n-q+1}{2}.
$$
Since there are $k$ choices for $s$, we obtain
$$
\ell_{R_0}\bigl(R_0/(z_1,z_2,z_3^k)^n\bigr)
=
k\sum_{q=0}^{n-1}\binom{n-q+1}{2}
=
k\binom{n+2}{3}.
$$
Thus
$$
\ell_{R_0}\bigl(R_0/(z_1,z_2,z_3^k)^n\bigr)
=
\frac{k}{3!}n^3+O(n^2).
$$
Hence the normalized leading coefficient is $k$, and therefore
$$
\operatorname{br}(\Fcal,0)=k.
$$

Finally, consider the perturbation
$$
\Phi_t(z)=
\begin{pmatrix}
z_1\\
z_2\\
z_3^k-t
\end{pmatrix},
\qquad
0<|t|\ll 1.
$$
Its zero set near $0$ consists of the $k$ points
$$
(0,0,\zeta),\qquad \zeta^k=t.
$$
Each zero is simple. Therefore
$$
\deg(\Fcal,0)=k.
$$
Consequently, in this example,
$$
\begin{aligned}
\operatorname{br}(\Fcal,0)
&=
\deg(\Fcal,0)
=
\ell(\Ocal_B/\Ical_{\Fcal},0) \\
&=
\ell\bigl(\Ecal xt^1(\Fcal,\Ocal_B),0\bigr)
=
c_3(\Fcal,0)
=
k.
\end{aligned}
$$
\end{exam}
\bibliography{papers}
\end{document}